\pgfplotsset{compat=1.14}
\pgfplotsset{every tick label/.append style={font=\small}}
\theoremstyle{definition}
\newtheorem{definition}{Definition}[section]
\newtheorem{notation}[definition]{Notation}
\newtheorem{example}[definition]{Example}
\newtheorem{assumption}[definition]{Assumption}
\theoremstyle{plain}
\newtheorem{proposition}[definition]{Proposition}
\newtheorem{theorem}[definition]{Theorem}
\newtheorem{lemma}[definition]{Lemma}
\newtheorem{corollary}[definition]{Corollary}
\theoremstyle{remark}
\newtheorem{remark}[definition]{Remark}
\numberwithin{equation}{section}
\newcommand{\E}{\mathbb{E}}
\newcommand{\N}{\mathbb{N}}
\renewcommand{\P}{\mathbb{P}}
\newcommand{\R}{\mathbb{R}}
\newcommand{\Y}{\mathbb{Y}}
\newcommand{\cA}{\mathcal{A}}
\newcommand{\cB}{\mathcal{B}}
\newcommand{\cC}{\mathcal{C}}
\newcommand{\cD}{\mathcal{D}}
\newcommand{\cE}{\mathcal{E}}
\newcommand{\cF}{\mathcal{F}}
\newcommand{\cH}{\mathcal{H}}
\newcommand{\cK}{\mathcal{K}}
\newcommand{\cL}{\mathcal{L}}
\newcommand{\cM}{\mathcal{M}}
\newcommand{\cN}{\mathcal{N}}
\newcommand{\cO}{\mathcal{O}}
\newcommand{\cP}{\mathcal{P}}
\newcommand{\cU}{\mathcal{U}}
\newcommand{\cV}{\mathcal{V}}
\newcommand{\cX}{\mathcal{X}}
\newcommand{\cY}{\mathcal{Y}}
\newcommand{\bY}{\mathbf{Y}}
\newcommand{\bZ}{\mathbf{Z}}
\newcommand{\sA}{\mathscr{A}}
\newcommand{\Ind}{\mathbf{1}}
\newcommand{\rd}{\mathrm{d}}
\newcommand{\vp}{\varphi}
\newcommand{\pa}{\partial}
\newcommand{\bme}{\bm{\eta}}
\newcommand{\frf}{\mathfrak{f}}
\newcommand{\deta}{\dot{\eta}}
\newcommand{\Lipb}{\textrm{\textnormal{Lip}}_b}
\newcommand{\ver}[1]{{\vert\kern-0.25ex\vert\kern-0.25ex\vert #1 \vert\kern-0.25ex\vert\kern-0.25ex\vert}}
\newcommand{\simplex}{\Delta_{[0,T]}}
\newcommand{\pvarst}{p,[s,t]}
\newcommand{\pvarzT}{p,[0,T]}
\newcommand{\pvarzt}{p,[0,t]}
\newcommand{\pvarrt}{p,[r,t]}
\newcommand{\ptvarzt}{\frac{p}{2},[0,t]}
\newcommand{\ptvarrt}{\frac{p}{2},[r,t]}
\newcommand{\onevarzt}{1,[0,t]}
\newcommand{\onevarrt}{1,[r,t]}
\newcommand{\Cpvar}{\cC^{p\textrm{\textnormal{-var}}}}
\newcommand{\Cptvar}{\cC^{\frac{p}{2}\textrm{\textnormal{-var}}}}
\newcommand{\rp}{\mathscr{V}^p}
\newcommand{\grp}{\mathscr{V}_g^{0\textrm{\textnormal{,}}p}}
\newcommand{\crp}{\cV^p_Y}
\newcommand{\crpt}{\cV^p_{\tilde{Y}}}
\newcommand{\tX}{\tilde{X}}
\newcommand{\tY}{\tilde{Y}}
\newcommand{\teY}{\tilde{\Y}}
\newcommand{\tbY}{\tilde{\bY}}
\newcommand{\tx}{\tilde{x}}
\newcommand{\ta}{\tilde{a}}
\renewcommand{\epsilon}{\varepsilon}
\newcommand{\tg}{\tilde{\gamma}}
\newcommand{\dg}{\dot{\gamma}}
\newcommand{\bu}{\bar{u}}
\DeclareMathOperator*{\esssup}{ess\,sup}
\DeclareMathOperator*{\argmin}{arg\,min}
\DeclareMathOperator*{\diag}{diag}
\begin{document}

%\title{Propagation of Uncertainty in Hidden Markov Models}
\title{Robust Filtering and Propagation of Uncertainty\\
in Hidden Markov Models}
\author{Andrew L. Allan\\
Department of Mathematics\\
ETH Z\"urich, Switzerland\\
andrew.allan\hspace{-0.6pt}{\fontfamily{ptm}\selectfont @}\hspace{-0.4pt}math.ethz.ch}
\date{March 14, 2021}
\maketitle

\vspace{-12pt}

\begin{abstract}
We consider the filtering of continuous-time finite-state hidden Markov models, where the rate and observation matrices depend on unknown time-dependent parameters, for which no prior or stochastic model is available. We quantify and analyze how the induced uncertainty may be propagated through time as we collect new observations, and used to simultaneously provide robust estimates of the hidden signal and to learn the unknown parameters, via techniques based on pathwise filtering and new results on the optimal control of rough differential equations.

\vspace{7pt}
Keywords: hidden Markov model, filtering, parameter uncertainty, rough paths, pathwise optimal control

MSC 2020: 60G35, 60L50, 60L90
\end{abstract}

\medskip

\noindent\textbf{Acknowledgment:} The author gratefully acknowledges financial support by the Swiss National Science Foundation via Project 200021\textunderscore 184647.

\section{Introduction}

The filtering of hidden processes from noisy observations is an important and routine problem arising in many applications. The basic problem is to derive an optimal online estimator for an unobserved `signal' process $X$ evolving randomly in time, from observations of another process $Y$ whose dynamics depend on the current state of the signal. Such stochastic filters have been derived and analyzed in various contexts, notably in the settings of linear underlying dynamics (the Kalman--Bucy filter \cite{Kalman1960,KalmanBucy1961}) and finite-state Markov processes (the Wonham filter \cite{Wonham1965}), but also in general nonlinear settings; see Bain and Crisan \cite{BainCrisan2009} or Crisan and Rozovskii \cite{CrisanRozovskii2011} for a comprehensive exposition of nonlinear filtering.

Stochastic filters take two primary inputs, namely a stochastic model for the underlying processes $X, Y$, and the observed data, coming from discrete observations of the path $t \mapsto Y_t$. The performance of a filter is naturally sensitive to the choice of stochastic model, and to the calibration of its parameters. In practice the parameters of the model are often unknown and may themselves vary in time. With the benefit of a stochastic model for the parameters (or simply a prior distribution in the case of constant parameters), one may in principle simply increase the dimension of the filter in order to simultaneously estimate the unknown parameters alongside the signal. In the absence of such a stochastic model, however, filtering alone is not sufficient, particularly when the parameters vary in time according to entirely unknown dynamics. Such general parameter uncertainty is thus a substantial and compelling problem.

The objective of the present work is to provide a theoretical framework to quantify and model how our uncertainty in the model parameters may be propagated through time, and to derive filters which provide robust estimates, both of the hidden signal process and of the unknown parameters.

We focus on the case of continuous-time finite-state hidden Markov models. The associated filters are known to be continuous with respect to their model parameters---corresponding convergence results are given in for example Chigansky and van Handel \cite{ChiganskyvanHandel2007} or Guo and Yin \cite{GuoYin2006}---but this by no means guarantees a satisfactory performance when the adopted parameters differ significantly from the true parameters. Uncertainty-robust filters for such systems were proposed by Borisov \cite{Borisov2008,Borisov2011} via minimax-filtering, whereby a best estimate is sought with respect to the worst case scenario, where `scenarios' here are represented by probability distributions over the space of all possible parameter values.

Such minimax procedures are by now classical, designed to find the estimate which minimizes the maximum expected loss over a range of plausible models, an approach which may be traced back at least as far as Wald \cite{Wald1945}, and has been applied in various settings, principally in those with linear underlying dynamics; see for example Martin and Mintz \cite{MartinMintz1983}, Miller and Pankov \cite{MillerPankov2005}, Siemenikhin \cite{Siemenikhin2016}, Siemenikhin, Lebedev and Platonov \cite{SiemenikhinLebedevPlatonov2005} or Verd\'u and Poor \cite{VerduPoor1984}. Invariably, however, by focusing exclusively on the worst case scenario, such procedures do not necessarily ensure a satisfactory performance under statistically realistic scenarios, and moreover make no attempt to learn the true parameter values, or more generally to evaluate our uncertainty and how it should be updated to reflect new observations.

Our approach is inspired by the discrete-time results of Cohen \cite{Cohen2020}, in which the data-driven robust (DR) expectation of \cite{Cohen2017} is introduced in a filtering context as a means of computing uncertainty-robust evaluations of functionals of the signal. In particular, in \cite{Cohen2020} it is shown that such DR-expectations actually provide the only way to construct an `expectation' which penalizes uncertainty, while preserving the natural properties of monotonicity, translation equivariance and constant triviality. In short, such nonlinear expectations consider evaluating random variables under a whole family of stochastic models, which itself is a standard approach to problems of robustness, except that the DR-expectation also penalizes such models according to how `unreasonable' they are considered to be. This penalisation is linked to statistical estimation of the models themselves, specifically via the corresponding negative log-likelihood function evaluated using the observed data.

Nonlinear expectations incorporating such model penalisation were first applied to continuous-time filtering in \cite{AllanCohen2019} in the context of Kalman--Bucy filtering---however, there the penalisation was based only on an initial calibration, and was not updated to incorporate new observations. In both \cite{Cohen2020} and \cite{AllanCohen2019}, the nonlinear expectation is seen to be characterised by what is essentially its convex dual, which, owing to the additive structure of the penalty, may be computed by a suitable dynamic programming principle.

In the present work we shall focus on this dual object, or `value function', as the object which describes the propagation of our uncertainty through time as we collect new observations. We adopt the classical setting in which the unobserved signal process $X$ is a continuous-time finite-state Markov chain, and study the uncertainty arising from both the unknown rate matrix of the chain $X$, and from the unknown observation matrix which determines the drift of the observation process $Y$. We will see how the value function in this setting may be formulated to encode our opinion of how `reasonable' both posterior distributions and parameter values are given our observations, and how it may then be used to compute robust estimates of each.

As alluded to above, filters are typically sensitive to both uncertainty of the model parameters, and to errors in the observed data due to imprecise modelling of the observation process. One may therefore desire a filter to be `robust' in both of these distinct senses, namely robust with respect to parameter uncertainty, and continuous with respect to the observation path $t \mapsto Y_t$ (in some suitable topology on path space). By taking a fully pathwise approach, by means of first `lifting' the observation process into the space of rough paths, we will see that our resulting filters are robust in both of these senses.

The first use of rough path theory in uncertainty-robust filtering was presented in Section~4 of Allan and Cohen \cite{AllanCohen2020}, allowing to extend the results of \cite{AllanCohen2019}, but remaining in the Kalman--Bucy setting. We highlight the results of Crisan, Diehl, Friz and Oberhauser \cite{CrisanDiehlFrizOberhauser2013} as the first use of rough paths to establish continuity of stochastic filters with respect to the (enhanced) observation path. Another early application of rough paths to robust statistics was exhibited by Diehl, Friz and Mai \cite{DiehlFrizMai2016}, who show that continuity of maximum likelihood estimators for diffusion processes with unknown parameters is recovered upon lifting the observation to a rough path. The collection of works \cite{AllanCohen2020,CrisanDiehlFrizOberhauser2013,DiehlFrizMai2016} thus provides three different but important notions of robustness in the statistics of observed diffusion processes.

We highlight that the penalisation approach we adopt, formulated in terms of the `reasonability' of parameter values, is inherently non-probabilistic, and thus does not require a prior distribution for the parameter values, nor any stochastic model for the dynamics of the parameters, which would be necessary for a purely Bayesian approach. In particular, our approach is suitable for cases in which filtering the parameter values is not feasible (or desirable due to additional nonlinearities in the underlying equations). The only input required from the user is their prior opinion of the reasonability of different parameter values, which in practice may simply consist of a prior estimate along with their confidence in this estimate.

One of the key steps in our approach is the derivation of a pathwise optimal control problem driven by the observation path $t \mapsto Y_t(\omega)$. As was demonstrated in Diehl, Friz and Gassiat \cite{DiehlFrizGassiat2017} and subsequently in \cite{AllanCohen2020}, rough path theory provides a convenient framework for pathwise control problems, in which they are formulated as the optimal control of a rough differential equation (RDE). In the current work we also provide new results in this direction, particularly on the regularity of the corresponding (rough) value function. A particular difficulty which we face, not encountered in \cite{DiehlFrizGassiat2017} or \cite{AllanCohen2020}, is that the spatial domain of our value function is actually time-dependent (and indeed rough), and thus requires delicate analysis. Moreover, and of independent interest, we also provide a new growth estimate for rough integrals depending on unconstrained parameters, which not only improves upon the corresponding estimate in \cite{AllanCohen2020}, but in fact turns out to be sharp (see Lemma~\ref{lemma sharp rough bound} below).

The structure of the paper is as follows. We begin in Section~\ref{sec rough path preliminaries} by recalling some basic concepts from rough path theory, and presenting our main results for controlled RDEs in Theorem~\ref{theoremHMMRDE}, the proof of which shall be postponed to the appendix. In Section~\ref{sec uncertain HMM} we shall introduce our underlying filtering problem and motivate our approach to quantifying uncertainty via model penalisation. This will then lead to an optimal control problem, and we shall establish properties of the corresponding value function in Section~\ref{sec pathwise control problem}. We will then show in Section~\ref{sec HJ equations} that the value function satisfies a suitable rough PDE. We shall present two simple numerical examples in Section~\ref{sec numerical examples}, and end with some brief concluding remarks in Section~\ref{sec conclusion}.

\section{Rough path preliminaries}\label{sec rough path preliminaries}

\subsection{Notation}

We consider a finite time interval $[0,T]$, and write $\simplex := \{(s,t) : 0 \leq s \leq t \leq T\}$ for the standard 2-simplex. For any path $X$ on $[0,T]$, we write the increment of $X$ over the interval $[s,t]$ as $X_{s,t} := X_t - X_s$, and write $\|X\|_\infty := \sup_{s \in [0,T]} |X_s|$ for the supremum norm. We also define the following function spaces. For given vector spaces $V$ and $W$, we write
\begin{itemize}
\item $\cL(V;W)$ for the space of linear maps from $V \to W$,
\item $\Lipb = \Lipb(V;W)$ for the space of bounded Lipschitz functions from $V \to W$,
\item $C^n_b = C^n_b(V;W)$ ($n \in \N$) for the space of $n$ times continuously differentiable (in the Fr\'echet sense) functions $\phi \colon V \to W$ such that $\phi$ and all its derivatives up to order $n$ are uniformly bounded,
\item $\Cpvar = \Cpvar([0,T];V)$ for the space of continuous paths $Y \colon [0,T] \to V$ with finite $p$-variation:
$$\|Y\|_p := \bigg(\sup_{\cP}\sum_{[s,t] \in \cP} |Y_{s,t}|^p\bigg)^{\hspace{-2.5pt}\frac{1}{p}} < \infty,$$
where the supremum is taken over all finite partitions $\cP$ of the interval $[0,T]$.
\end{itemize}

For $p \in [2,3)$ we write $\rp = \rp([0,T];\R^d)$ for the space of continuous $p$-rough paths, that is, pairs $\bY = (Y,\Y)$ such that $Y \colon [0,T] \to \R^d$ is a continuous path of finite $p$-variation, its `enhancement' $\Y \colon \simplex \to \R^{d \times d}$ is continuous and satisfies
$$\|\Y\|_{\frac{p}{2}} := \bigg(\sup_{\cP}\sum_{[s,t] \in \cP} |\Y_{s,t}|^{\frac{p}{2}}\bigg)^{\hspace{-2.5pt}\frac{2}{p}} < \infty,$$
and such that Chen's relation,
\begin{equation}\label{eq:Chensrelation}
\Y_{s,t} = \Y_{s,r} + \Y_{r,t} + Y_{s,r} \otimes Y_{r,t},
\end{equation}
holds for all times $s \leq r \leq t$, where $\otimes$ denotes the standard tensor product from $\R^d \times \R^d$ to $\R^d \otimes \R^d \cong \R^{d \times d}$.

\begin{remark}
As can be readily checked, any \emph{smooth} path $Y \colon [0,T] \to \R^d$ can be `lifted' in a canonical way to a rough path $\bY = (Y,\Y)$ by enhancing it with the integral
\begin{equation}\label{eq:iteratedintegrals}
\Y_{s,t} = \int_s^tY_{s,r}\otimes\rd Y_r,
\end{equation}
defined in the Riemann--Stieltjes sense. On the other hand, for a general path $Y$ of finite $p$-variation, the integral in \eqref{eq:iteratedintegrals} does not exist in the classical sense. The point here then is that the value of this integral is postulated by the enhancement $\Y$, which in practice is often constructed using stochastic integration.

For example, given a continuous semimartingale $Y$, for $p \in (2,3)$ one can construct an enhancement via Stratonovich integration:
\begin{equation}\label{eq:HMMStratlift}
\Y_{s,t} = \int_s^tY_{s,r}\otimes\circ\,\rd Y_r,
\end{equation}
that is, $\Y^{ij}_{s,t} = \int_s^tY^i_{s,r}\circ\rd Y^j_r$ for $i,j = 1,\ldots,d$, and the resulting lift $\bY = (Y,\Y)$ then defines a random rough path, so that $\bY(\omega) \in \rp$ for almost every $\omega \in \Omega$. Of course, when $i = j$ we have simply $\Y^{ii}_{s,t} = \int_s^tY^i_{s,r}\circ\rd Y^i_r = \frac{1}{2}(Y^i_{s,t})^2$. More generally, by integration by parts,
$$\Y^{ij}_{s,t} + \Y^{ji}_{s,t} = \int_s^tY^i_{s,r}\circ\rd Y^j_r + \int_s^tY^j_{s,r}\circ\rd Y^i_r = Y^i_{s,t}Y^j_{s,t}.$$
Thus, the additional information encoded by this lift is contained in the antisymmetric part of $\Y$, which corresponds to the L\'evy area of the process $Y$.
\end{remark}

For rough paths $\bY = (Y,\Y)$ and $\tbY = (\tY,\teY)$, we write
$$\ver{\bY}_p := \|Y\|_p + \|\Y\|_{\frac{p}{2}}$$
for the (inhomogeneous) rough path norm\footnote{Of course, $\rp$ is not actually a vector space due to the nonlinear structure of \eqref{eq:Chensrelation}.}, and
\begin{equation}\label{eq:rpdistance}
\|\bY;\tbY\|_p := \|Y - \tY\|_p + \|\Y - \teY\|_{\frac{p}{2}}
\end{equation}
for the corresponding rough path distance.

We will also consider the space of geometric rough paths $\grp \subset \rp$, defined as the closure of canonical lifts of smooth paths with respect to the pseudometric in \eqref{eq:rpdistance}. For example, when $Y$ is a semimartingale and we lift using Stratonovich integration, as in \eqref{eq:HMMStratlift}, the resulting lift turns out to be a (random) geometric rough path. This property of being well approximated by smooth paths allows one to make sense of solutions to a wide class of rough ODEs and PDEs---we will see an example of this in Definition~\ref{HMMdefnsolnroughHJB} below.

We will sometimes write e.g.~$\|Y\|_{\pvarst}$ for the $p$-variation of $Y$ over the subinterval $[s,t]$.

\subsection{Rough integration}

As noted above, the enhancement $\Y$ may be seen as postulating the value of the integrals $\Y^{ij}_{s,t} = \int_s^tY^i_{s,r}\,\rd Y^j_r$ for $i,j = 1,\ldots,d$. More generally, $\bY^j = (Y^j,(\Y^{ij})_{i=1}^d)$ contains all the information required to define integrals against $Y^j$, for any integrand in the space of paths `controlled' by $(Y^1,\ldots,Y^d)$ in the following sense.

Let $\bY = (Y,\Y)$ be a $p$-rough path for some $p \in [2,3)$. We say that a path $X \in \Cpvar([0,T];\R^m)$ is a controlled rough path (in the sense of Gubinelli \cite{Gubinelli2004}), if there exists a path $X' \in \Cpvar([0,T];\cL(\R^d;\R^m))$, known as the Gubinelli derivative of $X$ with respect to $Y$, such that the remainder term $R^X \colon \simplex \to \R^m$, defined implicitly by
\begin{equation}\label{eq:Gub deriv}
X_{s,t} = X'_sY_{s,t} + R^X_{s,t},
\end{equation}
satisfies $\|R^X\|_\frac{p}{2} < \infty$. We write $\crp = \crp([0,T];\R^m)$ for the space of controlled rough paths (with respect to $Y$), which becomes a Banach space when equipped with the norm $(X,X') \mapsto |X_0| + |X'_0| + \|X'\|_p + \|R^X\|_{\frac{p}{2}}$.

\begin{proposition}[Proposition~2.6 in \cite{FrizZhang2018}]\label{HMMproproughintegral}
Let $\bY = (Y,\Y) \in \rp([0,T];\R^d)$, and let $(X,X') \in \crp([0,T];\cL(\R^d;\R^m))$ be a controlled rough path. Then the limit
\begin{equation*}
\int_0^TX_r\,\rd\bY_r := \lim_{|\cP| \to 0}\sum_{[s,t] \in \cP} X_sY_{s,t} + X'_s\Y_{s,t}
\end{equation*}
exists\footnote{Strictly speaking, in making precise sense of the product $X'_s\Y_{s,t}$, we use the natural identification of $\cL(\R^d;\cL(\R^d;\R^m))$ with $\cL(\R^d \otimes \R^d;\R^m)$.}, where the limit is taken over any sequence of partitions $\cP$ of the interval $[0,T]$ such that the mesh size $|\cP| \to 0$. This limit (which does not depend on the choice of sequence of partitions) is called the rough integral of $X$ against $\bY$.
\end{proposition}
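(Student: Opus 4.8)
The statement to prove is the existence of the rough integral of a controlled rough path $(X,X')$ against $\bY = (Y,\Y)$, i.e.~the convergence of the compensated Riemann sums $\sum_{[s,t] \in \cP} (X_s Y_{s,t} + X'_s \Y_{s,t})$ as the mesh $|\cP| \to 0$, together with independence of the limit from the chosen sequence of partitions. The standard route is via the sewing lemma, so the plan is to first reduce the claim to an abstract statement about almost-additive 2-index maps and then verify the required estimate using Chen's relation \eqref{eq:Chensrelation} and the controlled structure \eqref{eq:Gub deriv}.

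First I would set $\Xi_{s,t} := X_s Y_{s,t} + X'_s \Y_{s,t}$ and compute the defect $\delta\Xi_{s,r,t} := \Xi_{s,t} - \Xi_{s,r} - \Xi_{r,t}$ for $s \le r \le t$. Expanding using $Y_{s,t} = Y_{s,r} + Y_{r,t}$ and Chen's relation $\Y_{s,t} = \Y_{s,r} + \Y_{r,t} + Y_{s,r} \otimes Y_{r,t}$, the terms $X_s Y_{s,r}$, $X_r Y_{r,t}$ partially cancel, and one is left with
\begin{equation*}
\delta\Xi_{s,r,t} = -X_{s,r} Y_{r,t} - X'_{s,r} \Y_{r,t} + X'_s Y_{s,r} \otimes Y_{r,t}.
\end{equation*}
Now I would substitute $X_{s,r} = X'_s Y_{s,r} + R^X_{s,r}$ from \eqref{eq:Gub deriv}; the term $X'_s Y_{s,r} Y_{r,t}$ cancels against $X'_s Y_{s,r}\otimes Y_{r,t}$ (via the identification in the footnote), leaving
\begin{equation*}
\delta\Xi_{s,r,t} = -R^X_{s,r} Y_{r,t} - X'_{s,r} \Y_{r,t}.
\end{equation*}
Taking norms and using that $\|R^X\|_{\frac{p}{2}} < \infty$, $\|X'\|_p < \infty$, $\|Y\|_p < \infty$ and $\|\Y\|_{\frac{p}{2}} < \infty$, together with superadditivity of the relevant control functions (the $\frac{p}{2}$-variation controls of $R^X$ and $\Y$, and the $p$-variation control of $Y$ and $X'$), gives a bound of the form $|\delta\Xi_{s,r,t}| \le C\, \varpi(s,t)^{3/p}$ for a control $\varpi$, where $3/p > 1$ since $p < 3$. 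This is exactly the hypothesis of the sewing lemma.

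The sewing lemma then yields a unique (up to the additive constant fixed by $\Xi_{0,0}=0$) 2-index map $I$ with $\delta I = 0$ and $|I_{s,t} - \Xi_{s,t}| \lesssim \varpi(s,t)^{3/p}$; setting $\int_0^T X_r\,\rd\bY_r := I_{0,T}$ and observing that, because $3/p > 1$, the Riemann sums $\sum_{[s,t]\in\cP}\Xi_{s,t}$ converge to $I_{0,T}$ along any sequence of partitions with mesh tending to zero (the discrepancy being bounded by $\sum_{[s,t]\in\cP}|I_{s,t}-\Xi_{s,t}| \le C \sum \varpi(s,t)^{3/p} \to 0$ by uniform continuity of $\varpi$ on the compact simplex), establishes both existence and partition-independence. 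The main obstacle — really the only non-bookkeeping point — is obtaining the superadditive control $\varpi$ dominating all the cross terms simultaneously and checking the exponent $3/p > 1$; this is where the restriction $p \in [2,3)$ is used, and where one must be slightly careful combining a $p$-variation factor with a $\frac{p}{2}$-variation factor to land on a single control raised to the power $\frac{1}{p} + \frac{2}{p} = \frac{3}{p}$. Everything else is a direct application of the sewing lemma, which I would invoke as a black box.
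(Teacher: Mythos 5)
Your argument is correct: the defect computation $\delta\Xi_{s,r,t} = -R^X_{s,r}Y_{r,t} - X'_{s,r}\Y_{r,t}$, the resulting control bound with exponent $\tfrac{3}{p} > 1$, and the appeal to the ($p$-variation) sewing lemma constitute the standard proof of this result. The paper does not prove the proposition itself but cites Proposition~2.6 of Friz--Zhang, whose proof is precisely this sewing-lemma argument, so your route coincides with the one the paper relies on.
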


%Moreover, for any $0 \leq s < t \leq T$, we have the estimate
%\begin{equation}\label{eq:HMMroughintbound}
%\bigg|\int_s^tX_r\,\rd\bY_r - X_sY_{s,t} - X'_s\Y_{s,t}\bigg| \leq C_p\Big(\|R^X\|_{\ptvarst}\|Y\|_{\pvarst} + \|X'\|_{\pvarst}\|\Y\|_{\ptvarst}\Big)
%\end{equation}
%where the constant $C_p$ depends only on $p$.

\subsection{Rough differential equations}

We consider the rough differential equation (RDE) given by
\begin{equation}\label{eq:HMMRDE}
\rd X_t = b(X_t,\gamma_t)\,\rd t + \phi(X_t,\gamma_t)\,\rd\bY_t
\end{equation}
driven by a rough path $\bY \in \rp([0,T];\R^d)$, with some fixed $\gamma \in \Cptvar([0,T];\R^k)$.

\begin{theorem}\label{theoremHMMRDE}
Let $b \in \Lipb(\R^m \times \R^k;\R^m)$, $\phi \in C^3_b(\R^m \times \R^k;\cL(\R^d;\R^m))$, and $\psi \in C^3_b(\R^m \times \R^k;\cL(\R^d;\R^l))$. Let $p \in [2,3)$ and $T > 0$.
\begin{enumerate}[(i)]
\item For any rough path $\bY = (Y,\Y) \in \rp$, parameter $\gamma \in \Cptvar$ and any $x \in \R^m$, there exists a unique $X \in \Cpvar$ such that the controlled path $(X,X') = (X,\phi(X,\gamma)) \in \crp$ solves the RDE \eqref{eq:HMMRDE} driven by $\bY$ with parameter $\gamma$ and initial condition $X_0 = x$.
\item Suppose that $\ver{\bY}_{p,[0,T]} \leq L$ for some $L > 0$. We have the bound
\begin{equation}\label{eq:HMMroughintpsibound}
\bigg\|\int_0^\cdot\psi(X_r,\gamma_r)\,\rd\bY_r\bigg\|_{p,[0,T]} \leq C\Big(1 + T^{\frac{p - 1}{p}} + \|\gamma\|_{\frac{p}{2},[0,T]}^{\frac{p - 1}{2}}\Big)\ver{\bY}_{p,[0,T]},
\end{equation}
where the constant $C$ depends on $b, \phi, \psi, p$ and $L$.
\item Suppose that $\tX$ is the solution of \eqref{eq:HMMRDE} driven by $\tbY$ with parameter $\tg$, and suppose that $\ver{\bY}_{p,[0,T]}, \ver{\tbY}_{p,[0,T]} \leq L$. Assume also that $\|\gamma\|_{\frac{p}{2},[0,T]}, \|\tg\|_{\frac{p}{2},[0,T]} \leq M$ for some $M > 0$. Then we have the estimates
\begin{equation}
\|X - \tX\|_{p,[0,T]} \leq C'\Big(|X_0 - \tX_0| + \|\bY;\tbY\|_{p,[0,T]} + |\gamma_0 - \tg_0| + \|\gamma - \tg\|_{\frac{p}{2},[0,T]}\Big),\label{eq:RDEcontXglobal}
\end{equation}
\begin{align}
\bigg\|&\int_0^\cdot\psi(X_r,\gamma_r)\,\rd\bY_r - \int_0^\cdot\psi(\tX_r,\tg_r)\,\rd\tbY_r\bigg\|_{p,[0,T]}\nonumber\\
&\leq C''\Big(|X_0 - \tX_0| + \|\bY;\tbY\|_{p,[0,T]} + |\gamma_0 - \tg_0| + \|\gamma - \tg\|_{\frac{p}{2},[0,T]}\Big),\label{eq:RDEcontintpsiglobal}
\end{align}
where the constants $C', C''$ depend on $b, \phi, p, L$ and $M$, and $C''$ also depends on $\psi$.
\item Let $p \in (2,3)$. Suppose that $Y$ is a continuous semimartingale on some probability space $(\Omega,\cF,\P)$ and $\bY = (Y,\Y)$ is its Stratonovich lift, as in \eqref{eq:HMMStratlift}, so that $\bY(\omega) = (Y(\omega),\Y(\omega)) \in \grp$ for almost every $\omega \in \Omega$. Then the solution of the random RDE \eqref{eq:HMMRDE} is indistinguishable from the solution of the Stratonovich SDE
\begin{equation*}
\rd X_t = b(X_t,\gamma_t)\,\rd t + \phi(X_t,\gamma_t)\circ\rd Y_t,
\end{equation*}
and moreover the rough and Stratonovich integrals coincide almost surely, that is,
$$\int_0^t\psi(X_r(\omega),\gamma_r)\,\rd\bY_r(\omega) = \bigg(\int_0^t\psi(X_r,\gamma_r)\circ\rd Y_r\bigg)(\omega)$$
for almost every $\omega \in \Omega$.
\end{enumerate}
\end{theorem}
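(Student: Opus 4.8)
The plan is to prove the four parts essentially in the order stated, since each builds on the previous ones, and to treat the time-dependent parameter $\gamma$ by the standard device of augmenting the state: set $Z = (X,\gamma) \in \R^{m+k}$, though one must be careful that $\gamma$ has only finite $\frac{p}{2}$-variation and is \emph{not} itself driven by $\bY$ (it appears with a zero Gubinelli derivative). For part (i), I would first treat the case of a smooth driver $Y$, where \eqref{eq:HMMRDE} is a classical controlled ODE, and then pass to the rough setting by the now-standard fixed-point argument of Gubinelli (cf.~\cite{Gubinelli2004,FrizZhang2018}): the map $(X,X') \mapsto (x + \int_0^\cdot b(X,\gamma)\,\rd r + \int_0^\cdot \phi(X,\gamma)\,\rd\bY, \phi(X,\gamma))$ is a contraction on a ball in $\crp([0,\tau];\R^m)$ for $\tau$ small, using Proposition~\ref{HMMproproughintegral} to make sense of the rough integral and the $C^3_b$-regularity of $\phi$ to control the composition $(X,X') \mapsto (\phi(X,\gamma),D_x\phi(X,\gamma)X')$ as a controlled path. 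The drift term $b(X,\gamma)\,\rd t$ contributes only a term of finite $1$-variation, hence is harmless for the controlled-path structure (it goes into the remainder). One then iterates on successive small intervals; the number of steps is controlled by $\ver{\bY}_p$ and $\|\gamma\|_{\frac p2}$, which is where these quantities first enter.

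Part (ii) is the key new estimate and, I expect, the main obstacle. Naively iterating a local bound of the form $\|\int \psi(X,\gamma)\,\rd\bY\|_{p,[s,t]} \lesssim \ver{\bY}_{p,[s,t]}(1 + \|\gamma\|_{\frac p2,[s,t]})$ over the $N$ intervals needed to make $\ver{\bY}_p$ small on each would produce a factor $N$, and since $N$ itself grows like a power of $\|\gamma\|_{\frac p2}$, this would give a worse exponent than the claimed $\frac{p-1}{2}$. The improvement must come from summing $p$-variation (not supremum) increments: on a partition $\{[t_{i-1},t_i]\}$ with $\ver{\bY}_{p,[t_{i-1},t_i]} \le 1$, one estimates $\|\int\psi(X,\gamma)\,\rd\bY\|_{p,[t_{i-1},t_i]}^p \lesssim \ver{\bY}_{p,[t_{i-1},t_i]}^p(1 + \|\gamma\|_{\frac p2,[t_{i-1},t_i]})^{p-1}$ — the crucial point being that the local rough-integral bound, after one uses that $X$ and $X'$ are themselves controlled with norms bounded in terms of $L$ and the local $\gamma$-variation, produces $\|\gamma\|_{\frac p2}^{\frac{p-1}{2}}$ on each piece (one power of $\|\gamma\|_p \le \|\gamma\|_{\frac p2}^{1/2}$ from $\|X'\|_p$-type terms, contributing $\frac{p}{2}\cdot\frac{1}{2}\cdot$ exponents that collect to $p-1$ after raising to the $p$) — and then summing and using superadditivity of $w(s,t) = \|\gamma\|_{\frac p2,[s,t]}^{p/2}$ together with $(\sum a_i)(\sum b_i) \ge \sum a_ib_i$ to recombine $\sum_i \ver{\bY}_{p,[t_{i-1},t_i]}^p\|\gamma\|_{\frac p2,[t_{i-1},t_i]}^{(p-1)/1} \le (\ver{\bY}_p)^p(\|\gamma\|_{\frac p2})^{(p-1)p/2}$, wait — one actually uses $\|\Y\|_{\frac p2}$ controls, and the bookkeeping with the two control functions $\ver{\bY}_p$ and $\|\gamma\|_{\frac p2}$ is exactly the delicate part. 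The $T^{\frac{p-1}{p}}$ term similarly absorbs the drift contribution of finite $1$-variation ($T^{1} $ raised inside, giving $T^{(p-1)/p}$ after the $p$-th root, using $1 \le p/(p-1)$ interpolation of $1$-variation against $p$-variation on pieces). That this exponent is sharp is deferred to Lemma~\ref{lemma sharp rough bound}, so here one only needs the upper bound.

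For part (iii), the stability estimates, the strategy is the familiar one of running the part-(i)/(ii) arguments for the \emph{difference} of two solutions: one writes an RDE (or rather a system of controlled-path identities) for $X - \tX$ with $\gamma - \tg$ entering as an inhomogeneity, and applies the local contraction together with the local-to-global patching exactly as before, now keeping track of the additional data $|X_0 - \tX_0|$, $\|\bY;\tbY\|_p$, $|\gamma_0 - \tg_0|$, $\|\gamma-\tg\|_{\frac p2}$; the uniform bounds $L$ and $M$ ensure the implied constants are uniform. Standard estimates for differences of rough integrals (Lipschitz dependence of the rough integral on the integrand and on the rough path, e.g.~from \cite{FrizZhang2018}) plus the $C^3_b$-regularity — here $C^3$ rather than $C^2$ is used precisely to get \emph{Lipschitz} (not merely continuous) dependence of the second-order terms — give \eqref{eq:RDEcontXglobal}, and feeding the result back into the integral of $\psi$ gives \eqref{eq:RDEcontintpsiglobal}. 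Finally, part (iv) is a consistency statement: the Stratonovich lift $\bY$ of a continuous semimartingale is a.s.\ geometric (\cite{FrizZhang2018} or \cite{CrisanDiehlFrizOberhauser2013}), so by the Wong--Zakai / rough-path limit theorem the RDE solution is the a.s.\ limit of ODE solutions driven by (e.g.\ piecewise-linear) approximations $Y^n \to Y$, while classical Wong--Zakai for Stratonovich SDEs gives the same limit for the SDE; uniqueness of limits identifies the two, and the same approximation argument applied to $\int\psi(X,\gamma)\,\rd\bY$ versus $\int\psi(X,\gamma)\circ\rd Y$ — using part (iii) to pass to the limit on the rough side and standard SDE stability on the Stratonovich side, with $\gamma$ held fixed throughout — identifies the integrals. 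No new difficulty arises in (iv) beyond correctly invoking these limit theorems with the extra (deterministic, non-driven) parameter $\gamma$ along for the ride.
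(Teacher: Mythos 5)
Your overall architecture coincides with the paper's: part (i) via the Friz--Zhang controlled-path fixed point on small intervals, treating $\gamma$ as a controlled path with zero Gubinelli derivative and remainder $R^\gamma_{s,t}=\gamma_{s,t}$ (exactly as in Lemma~\ref{lemmainvarianceHMM}), part (iii) by running the same local contraction for differences and patching, and part (iv) by the standard rough/Stratonovich consistency theorems. Those parts are essentially the paper's proof.

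The one genuine gap is in your derivation of the exponent $\frac{p-1}{2}$ in part (ii). You propose to partition $[0,T]$ so that only $\ver{\bY}_{p,[t_{i-1},t_i]}$ is small on each piece and to carry the $\gamma$-dependence inside the local bound, e.g.\ $\|\int\psi\,\rd\bY\|_{p,[t_{i-1},t_i]}^p \lesssim \ver{\bY}_{p,[t_{i-1},t_i]}^p\big(1+\|\gamma\|_{\frac p2,[t_{i-1},t_i]}\big)^{p-1}$, and then recombine by superadditivity. This fails on two counts. First, the local fixed-point construction itself requires $\|\gamma\|_{\frac p2}$ to be small on each piece --- it enters the invariance condition \eqref{eq:smalltimet1} on the same footing as $\ver{\bY}$ and the interval length --- so on a piece where $\gamma$ has large $\frac p2$-variation you have neither the a priori solution bounds nor the quoted local integral estimate. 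Second, the recombination you sketch does not produce the exponent $\frac{p-1}{2}$ (as you yourself flag). The paper's mechanism is cleaner and puts \emph{all} of the $\gamma$-dependence into the number of pieces rather than into the local bounds: choose the partition so that $\ver{\bY}_{p,[s_{i-1},s_i]} + |s_i-s_{i-1}| + \|\gamma\|_{\frac p2,[s_{i-1},s_i]}$ equals a fixed threshold $h$ on every piece (except possibly one); superadditivity of $\|Y\|_p^p$, $\|\Y\|_{\frac p2}^{\frac p2}$, $|t-s|$ and $\|\gamma\|_{\frac p2}^{\frac p2}$ then gives $N \lesssim 1 + T + \|\gamma\|_{\frac p2,[0,T]}^{\frac p2}$; the local bound is the \emph{uniform} estimate $\|\int\psi\,\rd\bY\|_{p,[s_{i-1},s_i]} \lesssim \ver{\bY}_{p,[s_{i-1},s_i]}$; and the elementary Lemma~\ref{lemmapvarpartitionbound}, namely $\|Z\|_{p,[0,T]} \le N^{\frac{p-1}{p}}\big(\sum_i\|Z\|_{p,[s_{i-1},s_i]}^p\big)^{\frac1p}$, yields the prefactor $N^{\frac{p-1}{p}} \lesssim 1 + T^{\frac{p-1}{p}} + \|\gamma\|_{\frac p2,[0,T]}^{\frac{p-1}{2}}$. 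You correctly intuited that one must sum $p$-th powers of local $p$-variations rather than multiply a uniform bound by $N$, but the decomposition you chose cannot be pushed through as written.
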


The proof of Theorem~\ref{theoremHMMRDE}, as well as those of Corollary~\ref{corollary reverse time} and Lemma~\ref{lemma sharp rough bound} below, are given in the appendix.

\begin{corollary}\label{corollary reverse time}
Recall the hypotheses of part (iii) of Theorem~\ref{theoremHMMRDE}. We also have
\begin{equation*}
\|X - \tX\|_{p,[0,T]} \leq C'\Big(|X_T - \tX_T| + \|\bY;\tbY\|_{p,[0,T]} + |\gamma_T - \tg_T| + \|\gamma - \tg\|_{\frac{p}{2},[0,T]}\Big),
\end{equation*}
\begin{align*}
\bigg\|&\int_0^\cdot\psi(X_r,\gamma_r)\,\rd\bY_r - \int_0^\cdot\psi(\tX_r,\tg_r)\,\rd\tbY_r\bigg\|_{p,[0,T]}\\
&\leq C''\Big(|X_T - \tX_T| + \|\bY;\tbY\|_{p,[0,T]} + |\gamma_T - \tg_T| + \|\gamma - \tg\|_{\frac{p}{2},[0,T]}\Big)
\end{align*}
(written in terms of the terminal values $X_T$, $\tX_T$, $\gamma_T$, $\tg_T$ instead of the initial values), with the same constants $C'$, $C''$ as before.
\end{corollary}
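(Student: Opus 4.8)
The plan is to reduce the statement to Theorem~\ref{theoremHMMRDE}(iii) by reversing time in the RDE \eqref{eq:HMMRDE}. For a rough path $\bY = (Y,\Y) \in \rp$ and a parameter $\gamma \in \Cptvar$, I would introduce their time reversals $\overleftarrow{\bY} = (\overleftarrow{Y},\overleftarrow{\Y})$ and $\overleftarrow{\gamma}$ on $[0,T]$ by $\overleftarrow{Y}_t := Y_{T-t}$, $\overleftarrow{\gamma}_t := \gamma_{T-t}$ and $\overleftarrow{\Y}_{s,t} := -\Y_{T-t,\,T-s} + Y_{T-t,\,T-s} \otimes Y_{T-t,\,T-s}$ for $(s,t) \in \simplex$. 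A direct computation from Chen's relation \eqref{eq:Chensrelation} for $\bY$ shows that $\overleftarrow{\bY}$ again satisfies Chen's relation, so $\overleftarrow{\bY} \in \rp$; and since $\|\overleftarrow{Y}\|_{p,[0,T]} = \|Y\|_{p,[0,T]}$ and $\|\overleftarrow{\Y}\|_{\frac{p}{2},[0,T]} \leq C_p\big(\|\Y\|_{\frac{p}{2},[0,T]} + \|Y\|_{p,[0,T]}^2\big)$, the reversed path satisfies $\ver{\overleftarrow{\bY}}_{p,[0,T]} \leq \tilde{L}$ for some $\tilde{L}$ depending only on $L$. Expanding $Y^{\otimes 2} - \tY^{\otimes 2} = (Y-\tY)\otimes Y + \tY \otimes (Y - \tY)$ in the same way gives $\|\overleftarrow{\bY};\overleftarrow{\tbY}\|_{p,[0,T]} \leq C_L\|\bY;\tbY\|_{p,[0,T]}$, while trivially $\|\overleftarrow{\gamma}\|_{\frac{p}{2},[0,T]} = \|\gamma\|_{\frac{p}{2},[0,T]} \leq M$ and $\|\overleftarrow{\gamma} - \overleftarrow{\tg}\|_{\frac{p}{2},[0,T]} = \|\gamma - \tg\|_{\frac{p}{2},[0,T]}$.

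The next step is to record the basic time-reversal identities: if $X$ solves \eqref{eq:HMMRDE} driven by $\bY$ with parameter $\gamma$ and $X_0 = x$, then $\overleftarrow{X}_t := X_{T-t}$ is the solution of the RDE $\rd\overleftarrow{X}_t = -b(\overleftarrow{X}_t,\overleftarrow{\gamma}_t)\,\rd t + \phi(\overleftarrow{X}_t,\overleftarrow{\gamma}_t)\,\rd\overleftarrow{\bY}_t$ with parameter $\overleftarrow{\gamma}$ and initial condition $\overleftarrow{X}_0 = X_T$; and, writing $Z := \int_0^\cdot \psi(X_r,\gamma_r)\,\rd\bY_r$, one has $\int_0^t \psi(\overleftarrow{X}_r,\overleftarrow{\gamma}_r)\,\rd\overleftarrow{\bY}_r = Z_{T-t} - Z_T$ for all $t \in [0,T]$. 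For canonical lifts of smooth paths both identities are elementary changes of variables; for the geometric rough paths that arise in our application — in particular the Stratonovich lifts of Theorem~\ref{theoremHMMRDE}(iv) — they follow by approximating $\bY$ by such smooth lifts and passing to the limit, using the continuity of the rough integral (Proposition~\ref{HMMproproughintegral}) and the stability of RDE solutions and rough integrals in the data (Theorem~\ref{theoremHMMRDE}(iii)).

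Granting these identities the Corollary follows at once. The coefficients $(-b,\phi,\psi)$ satisfy the hypotheses of Theorem~\ref{theoremHMMRDE} with the same norms as $(b,\phi,\psi)$, and $\overleftarrow{X}, \overleftarrow{\tX}$ solve the reversed RDEs driven by $\overleftarrow{\bY},\overleftarrow{\tbY}$ with parameters $\overleftarrow{\gamma},\overleftarrow{\tg}$ and initial conditions $X_T, \tX_T$, where $\ver{\overleftarrow{\bY}}_{p,[0,T]},\ver{\overleftarrow{\tbY}}_{p,[0,T]} \leq \tilde{L}$ and $\|\overleftarrow{\gamma}\|_{\frac{p}{2},[0,T]},\|\overleftarrow{\tg}\|_{\frac{p}{2},[0,T]} \leq M$. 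Since $\|\overleftarrow{X} - \overleftarrow{\tX}\|_{p,[0,T]} = \|X - \tX\|_{p,[0,T]}$, and $\|Z - \tilde{Z}\|_{p,[0,T]}$ — with $\tilde{Z} := \int_0^\cdot\psi(\tX_r,\tg_r)\,\rd\tbY_r$ — equals the $p$-variation on $[0,T]$ of $\int_0^\cdot\psi(\overleftarrow{X},\overleftarrow{\gamma})\,\rd\overleftarrow{\bY} - \int_0^\cdot\psi(\overleftarrow{\tX},\overleftarrow{\tg})\,\rd\overleftarrow{\tbY}$, applying \eqref{eq:RDEcontXglobal} and \eqref{eq:RDEcontintpsiglobal} on $[0,T]$ to the reversed system and then using $\|\overleftarrow{\bY};\overleftarrow{\tbY}\|_{p,[0,T]} \leq C_L\|\bY;\tbY\|_{p,[0,T]}$ together with the equalities $\overleftarrow{X}_0 = X_T$, $\overleftarrow{\gamma}_0 = \gamma_T$ yields exactly the two asserted estimates, with constants of the same nature as (indeed, absorbing $C_L$ and $\tilde{L}$, the same as) the constants $C'$, $C''$ of Theorem~\ref{theoremHMMRDE}(iii).

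The main obstacle is the middle step: verifying that the RDE solution and the rough integral genuinely transform as stated under time reversal. This is transparent for smooth, hence geometric, rough paths and passes to the geometric case by density, which covers the situations of interest here; for a strictly non-geometric driver one must proceed more carefully, since the second-level reversal then acquires an extra symmetric correction. An alternative that avoids time reversal altogether is to re-run the localisation-and-patching argument behind the proof of Theorem~\ref{theoremHMMRDE}(iii) with the two endpoints of $[0,T]$ interchanged — patching the local estimates from $T$ down to $0$ rather than from $0$ up to $T$ — which reproduces the same bounds with literally the same constants.
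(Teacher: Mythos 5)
Your proof is correct and follows essentially the same route as the paper's: the paper defines the time reversal $\hat{Y}_t = Y_{T-t}$, $\hat{\Y}_{s,t} = -\Y_{T-t,T-s} + Y_{T-t,T-s}\otimes Y_{T-t,T-s}$, $\hat{\gamma}_t = \gamma_{T-t}$, observes that the reversed solution solves the reversed RDE (with $\rd t$ replaced by $-\rd t$) with initial condition $X_T$, and applies part (iii) of Theorem~\ref{theoremHMMRDE}. Your additional verifications of how the norms and the rough integral transform under reversal are details the paper leaves implicit.
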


We have from part (ii) of Theorem~\ref{theoremHMMRDE} that the bound
\begin{equation}\label{eq:rough int bound sharp}
\bigg\|\int_0^\cdot\psi(X_r,\gamma_r)\,\rd\bY_r\bigg\|_{p,[0,T]} \leq C\big(1 + \|\gamma\|_{\frac{p}{2},[0,T]}^{q}\big)\ver{\bY}_{p,[0,T]}
\end{equation}
holds when $q = \frac{p - 1}{2}$, for some constant $C$ depending on $b, \phi, \psi, p, L$ and $T$. In fact, this estimate is sharp, in the sense of the following lemma.

\begin{lemma}\label{lemma sharp rough bound}
For any $q < \frac{p - 1}{2}$, there exist $b, \phi, \psi, p, L$ and $T$, such that there does \emph{not} exist a constant $C$ such that \eqref{eq:rough int bound sharp} holds for all $\gamma \in \Cptvar$ and $\bY \in \rp$ with $\ver{\bY}_{p,[0,T]} \leq L$.
\end{lemma}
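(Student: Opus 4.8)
The plan is to exhibit an explicit counterexample family in which the rough integral collapses to an (essentially Young-type) integral against the first level only. Fix $p \in [2,3)$ and $q < \frac{p-1}{2}$, work in dimension $d = m = l = k = 1$ on $[0,1]$, and take $b \equiv 0$, $\phi \equiv 0$, and $\psi(x,c) := \rho(c)$ for some fixed $\rho \in C^3_b(\R;\R)$ with $\rho(0) \neq \rho(1)$. With $b = \phi = 0$ the solution of \eqref{eq:HMMRDE} started from $X_0 = 0$ is $X \equiv 0$ with Gubinelli derivative $X' = \phi(X,\gamma) \equiv 0$, so the integrand $\psi(X,\gamma)$ has Gubinelli derivative $D_x\psi(X,\gamma)\phi(X,\gamma) \equiv 0$, and by Proposition~\ref{HMMproproughintegral} the rough integral reduces to $\int_0^\cdot\psi(X_r,\gamma_r)\,\rd\bY_r = \lim_{|\cP|\to0}\sum_{[s,t]\in\cP}\rho(\gamma_s)Y_{s,t}$, which depends on $\bY$ only through $Y$.

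I would then construct, for each $n \in \N$, continuous piecewise-linear paths $\gamma^{(n)}, Y^{(n)} \colon [0,1] \to \R$ from $4n$ equal subintervals grouped into $n$ ``cycles'', so that within each cycle $\gamma^{(n)}$ ramps $1 \to 0$, rests at $0$, ramps $0 \to 1$, rests at $1$, while $Y^{(n)}$ stays \emph{constant} throughout the two $\gamma$-ramps, increases by $h_n$ while $\gamma^{(n)}$ rests at $0$, and decreases by $h_n$ while $\gamma^{(n)}$ rests at $1$, with amplitude $h_n := (2n)^{-1/p}$. Enhancing $Y^{(n)}$ by its canonical lift, which in one dimension is necessarily $\Y^{(n)}_{s,t} = \tfrac12(Y^{(n)}_{s,t})^2$, gives a (geometric) $p$-rough path $\bY^{(n)} = (Y^{(n)},\Y^{(n)}) \in \rp$.

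The required estimates are elementary. Since $\gamma^{(n)}$ makes $2n$ monotone moves of size $1$, one has $\|\gamma^{(n)}\|_{\frac p2,[0,1]} = (2n)^{2/p}$; since $Y^{(n)}$ makes $2n$ monotone moves of size $h_n$, one has $\|Y^{(n)}\|_{p,[0,1]} = (2n)^{1/p}h_n = 1$; and since $(Y^{(n)})^2$ makes $2n$ monotone moves of size $h_n^2$, one has $\|\Y^{(n)}\|_{\frac p2,[0,1]} = \tfrac12\big((2n)^{1/p}h_n\big)^2 = \tfrac12$, so $\ver{\bY^{(n)}}_{p,[0,1]} = \tfrac32 =: L$ for every $n$. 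The crucial point is the phasing: because $Y^{(n)}$ is frozen exactly on the intervals where $\gamma^{(n)}$ moves and moves only (in opposite directions) on the two rests, there is no cancellation between cycles --- the integral over each cycle is exactly $(\rho(0) - \rho(1))h_n$ --- so $\int_0^1\psi(X^{(n)}_r,\gamma^{(n)}_r)\,\rd\bY^{(n)}_r = n(\rho(0)-\rho(1))h_n$, and, bounding $p$-variation below by the single increment over $[0,1]$,
\begin{equation*}
\bigg\|\int_0^\cdot\psi(X^{(n)}_r,\gamma^{(n)}_r)\,\rd\bY^{(n)}_r\bigg\|_{p,[0,1]} \;\ge\; n\,|\rho(0)-\rho(1)|\,h_n \;=\; \frac{|\rho(0)-\rho(1)|}{2^{1/p}}\,n^{\frac{p-1}{p}}.
\end{equation*}
Since $n^{(p-1)/p}$ is comparable to $\|\gamma^{(n)}\|_{\frac p2,[0,1]}^{(p-1)/2} = (2n)^{(p-1)/p}$ while $\ver{\bY^{(n)}}_{p,[0,1]} = L$ is fixed, plugging $\gamma^{(n)}, \bY^{(n)}$ into \eqref{eq:rough int bound sharp} would force $c\,n^{(p-1)/p} \le CL\big(1 + c'\,n^{2q/p}\big)$ for all $n$, which is impossible since $\tfrac{2q}{p} < \tfrac{p-1}{p}$; hence no admissible constant $C$ exists.

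I expect the one genuinely delicate design choice to be the phase relation between $\gamma^{(n)}$ and $Y^{(n)}$: the driver must move \emph{only} while the parameter is stationary --- were $Y^{(n)}$ and $\gamma^{(n)}$ to move in step, $\oint \rho(\gamma)\,\rd Y$ would integrate an exact differential around a loop and vanish, destroying the accumulation --- while at the same time the amplitude must decay like $n^{-1/p}$ so that $\ver{\bY^{(n)}}_{p,[0,1]}$ stays bounded as the oscillation count $n \to \infty$. This competition between the oscillation count $n$, the forced amplitude $n^{-1/p}$, and the resulting parameter growth $\|\gamma^{(n)}\|_{\frac p2,[0,1]} \sim n^{2/p}$ is exactly what pins the critical exponent at $\tfrac{p-1}{2}$ and shows that Theorem~\ref{theoremHMMRDE}(ii) cannot be improved.
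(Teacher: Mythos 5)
Your construction is correct and is essentially the paper's own: both use phase-shifted oscillations in which $Y$ moves only while $\gamma$ is at rest (amplitude $n^{-1/p}$ for $Y$, $n$ cycles), so that the contributions $(\rho(0)-\rho(1))h_n$ accumulate to $\sim n^{(p-1)/p}$ while $\ver{\bY^{(n)}}_p$ stays bounded and $\|\gamma^{(n)}\|_{p/2}\sim n^{2/p}$. The only cosmetic differences are that the paper writes the cycles via a $4$-periodic tent function shifted by a quarter period and inserts an extra amplitude factor $n^{-\epsilon}$ on $\gamma$ that your version shows is unnecessary.
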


\section{Uncertainty in hidden Markov models}\label{sec uncertain HMM}

\subsection{The Wonham filter}

Let $X$ be a continuous-time Markov chain taking values in the standard basis $\cX = \{e_1,\ldots,e_m\}$ of $\R^m$. We write $A_t = [a_{ij}(t)]_{m \times m}$ for the rate matrix\footnote{This is defined as the transpose of the $Q$-matrix of the chain, so that $\P(X_{t+\epsilon} = e_i\,|\, X_t = e_j) = \delta_{ij} + a_{ij}(t)\epsilon + o(\epsilon)$ as $\epsilon \to 0$, where $\delta_{ij}$ is the Kronecker delta.} of $X$, so that in particular the process $t \mapsto X_t - \int_0^t A_sX_s\,\rd s$ is a c\`adl\`ag martingale. We assume that $\E[X_0] = \pi_0$, for some element $\pi_0$ of the open probability simplex
$$S^m := \bigg\{x = (x_1,\ldots,x_m)^{\hspace{-1pt}\top} \in \R^m \ \bigg| \ 0 < x_j < 1 \ \ \forall j = 1,\ldots,m,\ \ \sum_{j=1}^m x_j = 1\bigg\}.$$
We consider the problem of estimating the current state of the chain $X$ from observations of the $\R^d$-valued process $Y = (Y^1,\ldots,Y^d)$, with $Y_0 = 0$ and dynamics
\begin{equation}\label{eq:HMM observation SDE}
\rd Y^i_t = (h_t^i)^{\hspace{-1pt}\top}\hspace{-1pt}X_t\,\rd t + \rd B^i_t, \qquad \quad i = 1,\ldots,d,
\end{equation}
where $h^i$ is an $\R^m$-valued time-dependent vector, and $B = (B^1,\ldots,B^d)$ is a standard $d$-dimensional Brownian motion. In the following we will write $h = (h^1,\ldots,h^d) \in \R^{m \times d}$ for the full observation matrix, and it will also be convenient to write
$$H^i_t = \diag(h^i_t),$$
i.e.~the diagonal matrix with diagonal elements given by the vector $h^i_t$.

\begin{remark}
It is worth pointing out that, since the signal process $X$ takes only finitely many values, there is no loss of generality in assuming a linear observation function in \eqref{eq:HMM observation SDE}, as any function on a finite set may be written in this form.
\end{remark}

We will denote by $(\cY_t)_{t \geq 0}$ the (completed) natural filtration generated by the observation process $Y$. The goal of the associated filtering problem is to determine, at each time $t$, the posterior distribution $\pi_t = \E[X_t\,|\,\cY_t]$. In the present setting, the filtering problem was resolved by Wonham \cite{Wonham1965}; see also Bain and Crisan \cite[Chapter~3]{BainCrisan2009}. The optimal filter $\pi$ is the unique continuous $\cY_t$-adapted solution of the stochastic differential equation
\begin{equation}\label{eq:filtereqnIto}
\rd\pi_s = A_s\pi_s\,\rd s + \sum_{i=1}^d \big(H^i_s - (h^i_s)^{\hspace{-1.5pt}\top}\hspace{-1pt}\pi_sI\big)\pi_s\big(\rd Y^i_s - (h^i_s)^{\hspace{-1.5pt}\top}\hspace{-1pt}\pi_s\,\rd s\big)
\end{equation}
where $I$ denotes the $m \times m$-identity matrix.

It will be convenient later to interpret the stochastic integral appearing in \eqref{eq:filtereqnIto} in the sense of Stratonovich, rather than that of It\^o. The Stratonovich version of \eqref{eq:filtereqnIto} is given by
\begin{equation}\label{eq:filtereqnStrat}
~\rd\pi_s = A_s\pi_s\,\rd s + \frac{1}{2}\sum_{i=1}^d \Big(\big((h^i_s)^{\hspace{-1.5pt}\top}\hspace{-1pt}H^i_s\pi_s\big)I - (H^i_s)^2\Big)\pi_s\,\rd s + \sum_{i=1}^d \big(H^i_s - (h^i_s)^{\hspace{-1.5pt}\top}\hspace{-1pt}\pi_sI\big)\pi_s\circ\rd Y^i_s.~
\end{equation}

\subsection{Parameter uncertainty}

We shall consider both uncertainty of the rate matrix $A$ of the Markov chain $X$, and uncertainty of the observation matrix $h$.

We recall the space of $m \times m$-rate matrices, that is, matrices $A = [a_{ij}]_{m \times m}$ such that $a_{ij} \geq 0$ for all $i \neq j$, and $\sum_{i = 1}^m a_{ij} = 0$ for all $j = 1,\ldots,m$.

\begin{notation}
We shall write $\cA$ for the set of admissible rate matrices, and $\cH$ for the set of admissible observation matrices, which we shall assume to be bounded connected subsets of $m \times m$-rate matrices and of $\R^{m \times d}$ respectively. Let $k \geq 1$ be the total dimension of the space of $\cA \times \cH$, which we note can be at most $m(m - 1) + md$.
\end{notation}

For notational clarity, we assume that the elements of the space $\cA \times \cH$ may be parameterised by the elements of $\R^k$. More precisely, we make the following assumption.

\begin{assumption}\label{assumption uncertainty parameterisation}
We assume that there exists a bijection from $\R^k \to \cA \times \cH$ which belongs to the class $C^3_b$.
\end{assumption}

\begin{example}
As a simple example, one might have $m = 2$, $d = 1$,
$$\cA = \bigg\{\bigg(\hspace{-4pt}\begin{array}{cc}
-\lambda & 1 - \lambda\\
\lambda & -(1 - \lambda)
\end{array}\hspace{-4pt}\bigg)\, \bigg|\ \lambda \in (0,1) \bigg\}, \qquad \text{and} \qquad \cH = \bigg\{\bigg(\hspace{-4pt}\begin{array}{c}
-\alpha\\
\alpha
\end{array}\hspace{-4pt}\bigg)\, \bigg|\ \alpha \in (1,2)\bigg\}.$$
In this case $k = 2$, and we may parametrise $\cA \times \cH$ by $\R^2$ via the mapping
$$\R^2 \ni (a_1,a_2) \longmapsto \bigg(\frac{1}{1 + e^{-a_1}}, \frac{2 + e^{-a_2}}{1 + e^{-a_2}}\bigg) = (\lambda,\alpha) \in (0,1) \times (1,2) \cong \cA \times \cH.$$
\end{example}

\begin{remark}
This framework includes cases in which the rate matrix $A$ (resp.~the observation matrix $h$) is known. In such cases we simply have $\cA = \{A^{\text{true}}\}$ where $A^{\text{true}}$ is the true rate matrix (resp.~$\cH = \{h^{\text{true}}\}$ where $h^{\text{true}}$ is the true observation matrix).
\end{remark}

We suppose that the true parameter is a Lipschitz continuous path taking values in $\R^k$. We shall therefore take as our uncertainty class the space $\sA$, where:

\begin{notation}
We denote by $\sA$ the space of Lipschitz continuous paths $\gamma \colon [0,T] \to \R^k$.
\end{notation}

%\begin{assumption}\label{assumptionzeroinR}
%We assume that $0 \in \cA$, i.e.~the zero matrix is admissible.
%\end{assumption}

\subsection{The setup}

Our setup is the following. For each choice of parameter $\gamma \in \sA$ we denote by $A = (A_t : t \in [0,T])$ and $h = (h_t : t \in [0,T])$ the rate matrix and observation matrix corresponding to $\gamma$ (via the bijection in Assumption~\ref{assumption uncertainty parameterisation}).

Let $X$ and $Y$ be two adapted processes on a filtered space $(\Omega,\cF,(\cF_t)_{t \in [0,T]})$. For each $\gamma \in \sA$ and initial distribution $\pi_0 \in S^m$, we let $\P^{\gamma,\pi_0}$ be a probability measure such that the law of $(X,Y)$ is equal to the law of $(\tilde{X}^{\gamma,\pi_0},\tilde{Y}^{\gamma,\pi_0})$, where, on some probability space, $\tilde{X}^{\gamma,\pi_0}$ is a Markov chain with transition matrix $A$ and initial distribution $\E[\tilde{X}^{\gamma,\pi_0}_0] = \pi_0$, and $\tilde{Y}^{\gamma,\pi_0}$ is a weak solution of \eqref{eq:HMM observation SDE}, i.e.~$\tilde{Y}^{\gamma,\pi_0}$ satisfies the SDE \eqref{eq:HMM observation SDE} driven by \emph{some} Brownian motion $B$ with the observation matrix $h$ and initial value $\tilde{Y}^{\gamma,\pi_0}_0 = 0$.

We note in particular that the processes $X$ and $Y$ as functions on $\Omega \times [0,T]$, and hence also the (uncompleted) filtration $\sigma(Y_s : s \in [0,t])$, $t \in [0,T]$, are defined independently of the choice of parameters---it is only the \emph{law} of $(X,Y)$ which varies depending on the choices of $\gamma$ and $\pi_0$.

To the family of all possible parameters $(\gamma,\pi_0) \in \sA \times S^m$, we can naturally associate the corresponding solution $\pi$ of the filtering equation (\eqref{eq:filtereqnIto} or \eqref{eq:filtereqnStrat}). Thus, at each time $t \geq 0$, we obtain in general a whole family of possible posterior distributions $\pi_t = x \in S^m$ for the signal, and a family of possible values $\gamma_t = a \in \R^k$ for the unknown parameter $\gamma$ at time $t$. Since we don't know which choice is the correct one, at each time $t$ we wish to know how to decide which posterior distribution $x \in S^m$ and parameter value $a \in \R^k$ is the most `reasonable' given our observations.

At each time $t > 0$, and for each choice of posterior distribution $x \in S^m$ and parameter value $a \in \R^k$, the central question we pose is thus the following:
\begin{quotation}
Given our observations, and given all the possible parameters choices, how reasonable is it that we would end up with the posterior distribution $\pi_t = x$ and parameter value $\gamma_t = a$ at time $t$?
\end{quotation}
To make this question more concrete, we need a notion of the `unreasonability' of different parameter choices. Mathematically, this notion may be represented by a `penalty' function which, at each time $t$, penalizes parameters according to how unreasonable we consider them to be given our observations up to time $t$.

Let us suppose for the moment that we have specified such a penalty function, denoted by $\beta_t(\gamma,\pi_0\,|\,\cY_t)$, which assigns a penalty to each choice of the parameters $\gamma, \pi_0$. Then, for a particular posterior distribution $x \in S^m$ and parameter value $a \in \R^k$, the most reasonable parameters $(\gamma,\pi_0)$ at time $t$ are those which attain the minimum of the set
$$\big\{\beta_t(\gamma,\pi_0\,|\,\cY_t)\ \big|\ (\gamma,\pi_0) \in \sA \times S^m \ \, \text{such that}\ \, (\pi_t,\gamma_t) = (x,a)\big\},$$
where $\pi = (\pi_s)_{s \in [0,t]}$ satisfies the filtering equation with rate and observation matrices corresponding to the parameter $\gamma$.

\subsection{The penalty function}

We suppose that our penalty takes the form of a negative log-posterior density. That is, we take
\begin{equation}\label{eq:penneglogpostHMM}
\beta_t(\gamma,\pi_0\,|\,\cY_t) = -\log\hspace{-1pt}\big(\vartheta_t(\gamma,\pi_0)L_t(\gamma,\pi_0\,|\,\cY_t)\big),
\end{equation}
where $\vartheta$ and $L(\hspace{1pt}\cdot\,|\,\cY_t)$ can be thought of as a prior and likelihood respectively.

\begin{remark}\label{remark HMM ignore const}
Since the posterior is only proportional to the product of prior and likelihood, \eqref{eq:penneglogpostHMM} is correct up to an additive constant. For simplicity we will omit this constant from our analysis, conceding that our penalty function is correct up to an additive constant. This constant may be reintroduced upon numerical computation, chosen to ensure that the penalty function always takes the value zero at its minimum.
\end{remark}

The penalty function in \eqref{eq:penneglogpostHMM} is built from the log-likelihood function, a familiar object from classical statistics. Penalties based on log-likelihoods form the basis of the data-driven robust (DR) expectation of \cite{Cohen2017}, which allows the level of penalisation of different parameter choices to be recursively updated through time as we collect new observations. Here we add to this an additional penalty based on our prior beliefs, which may be calibrated accordingly. We assume that the prior takes the form
\begin{equation}\label{eq:priortheta}
-\log\vartheta_t(\gamma,\pi_0) = \int_0^t\frf(\pi_s,\gamma_s,\dg_s)\,\rd s + g(\pi_0,\gamma_0),
\end{equation}
where as usual $\pi = (\pi_s)_{s \in [0,t]}$ is the posterior distribution corresponding to the parameters $\gamma$ and $\pi_0$, and $\dg$ is the derivative of $\gamma$. Here, the functions $\frf \colon S^m \times \R^k \times \R^k \to \R$ and $g \colon S^m \times \R^k \to \R$ may be calibrated to represent our prior beliefs about the plausibility of different parameter choices. In practice the function $\frf$ may also be time dependent, and may even depend on our observations provided that it is $\cY_t$-predictable.

By allowing $\frf$ to depend on the derivative $\dg$, we can penalize parameters, not only according to their value, but also according to how quickly they vary over time. For example, if we believe that the true parameter (or some component thereof) should remain fairly constant in time, then we can incorporate this belief by choosing the function $\frf$ to grow very quickly relative to the magnitude of $\dg_s$.

The natural choice for the likelihood $L_t(\hspace{1pt}\cdot\,|\,\cY_t)$ is the Radon--Nikodym derivative
$$L_t(\gamma,\pi_0\,|\,\cY_t) = \bigg(\frac{\rd\P^{\gamma,\pi_0}}{\rd\P^{\bar{\gamma},\bar{\pi}_0}}\bigg)_{\hspace{-3pt}\cY_t},$$
that is, the likelihood ratio of the (arbitrary) parameter choice $\gamma, \pi_0$, with respect to a (fixed) choice of reference parameters $\bar{\gamma}, \bar{\pi}_0$. We will now derive an explicit expression for this likelihood.

Recall (from e.g.~Bain and Crisan \cite[Chapter~2]{BainCrisan2009}) that for a given choice of parameters $\gamma, \pi_0$, the \emph{innovation} process $V = (V^1,\ldots,V^d)$, given in this setting by
$$\rd V^i_s = \rd Y^i_s - (h^i_s)^{\hspace{-1.5pt}\top}\hspace{-1pt}\pi_s\,\rd s,\qquad \quad i = 1,\ldots,d,$$
is a $\cY_t$-adapted Brownian motion under $\P^{\gamma,\pi_0}$, and moreover that, in this setting, $V$ generates the observation filtration (see Allinger and Mitter \cite{AllingerMitter1981}). Writing $\bar{\pi}$ (resp.~$\bar{V}$) for the posterior distribution (resp.~innovation process) under the reference measure $\P^{\bar{\gamma},\bar{\pi}_0}$, we have
$$\rd V^i_s = \rd \bar{V}^i_s - \big((h^i_s)^{\hspace{-1.5pt}\top}\hspace{-1pt}\pi_s - (\bar{h}^i_s)^{\hspace{-1.5pt}\top}\hspace{-1pt}\bar{\pi}_s\big)\rd s,\qquad \quad i = 1,\ldots,d.$$
Thus, by Girsanov's theorem (see e.g.~\cite[Chapter~15]{CohenElliott2015}), we can represent the likelihood as a stochastic exponential, namely
$$L_t(\gamma,\pi_0\,|\,\cY_t) = \exp\bigg(\sum_{i=1}^d \bigg(\big((h^i_s)^{\hspace{-1.5pt}\top}\hspace{-1pt}\pi_s - (\bar{h}^i_s)^{\hspace{-1.5pt}\top}\hspace{-1pt}\bar{\pi}_s\big)\rd \bar{V}^i_s - \frac{1}{2}\int_0^t\big|(h^i_s)^{\hspace{-1.5pt}\top}\hspace{-1pt}\pi_s - (\bar{h}^i_s)^{\hspace{-1.5pt}\top}\hspace{-1pt}\bar{\pi}_s\big|^{2}\,\rd s\bigg)\bigg).$$
Substituting $\rd\bar{V}^i_s = \rd Y^i_s - (\bar{h}^i_s)^{\hspace{-1.5pt}\top}\hspace{-1pt}\bar{\pi}_s\,\rd s$, a short calculation yields
\begin{align*}
-&\log L_t(\gamma,\pi_0\,|\,\cY_t)\\
&= \sum_{i=1}^d \bigg(-\int_0^t \big((h^i_s)^{\hspace{-1.5pt}\top}\hspace{-1pt}\pi_s - (\bar{h}^i_s)^{\hspace{-1.5pt}\top}\hspace{-1pt}\bar{\pi}_s\big)\rd Y^i_s + \frac{1}{2}\int_0^t\Big(\big|(h^i_s)^{\hspace{-1.5pt}\top}\hspace{-1pt}\pi_s\big|^2 - \big|(\bar{h}^i_s)^{\hspace{-1.5pt}\top}\hspace{-1pt}\bar{\pi}_s\big|^2\Big)\rd s\bigg).
\end{align*}

Since the reference parameters are taken to be fixed, they simply amount to an additive constant in the above expression. That is,
\begin{equation}\label{eq:negloglikeItoHMM}
-\log L_t(\gamma,\pi_0\,|\,\cY_t) = \sum_{i=1}^d \bigg(-\int_0^t(h^i_s)^{\hspace{-1.5pt}\top}\hspace{-1pt}\pi_s\,\rd Y^i_s + \frac{1}{2}\int_0^t\big|(h^i_s)^{\hspace{-1.5pt}\top}\hspace{-1pt}\pi_s\big|^2\,\rd s\bigg) + \text{const.}
\end{equation}
As in Remark~\ref{remark HMM ignore const}, we shall henceforth omit this constant.

For later convenience, we make the transformation
\begin{align*}
-\int_0^t(h^i_s)^{\hspace{-1.5pt}\top}\hspace{-1pt}\pi_s\,\rd Y^i_s &= -\int_0^t(h^i_s)^{\hspace{-1.5pt}\top}\hspace{-1pt}\pi_s\circ\rd Y^i_s + \frac{1}{2}\big\langle (h^i)^{\hspace{-1.5pt}\top}\hspace{-1pt}\pi, Y^i \big\rangle_{\hspace{-1pt}t}\\
&= -\int_0^t(h^i_s)^{\hspace{-1.5pt}\top}\hspace{-1pt}\pi_s\circ\rd Y^i_s + \frac{1}{2}\int_0^t(h^i_s)^{\hspace{-1.5pt}\top}\hspace{-1pt}\big(H^i_s - (h^i_s)^{\hspace{-1.5pt}\top}\hspace{-1pt}\pi_sI\big)\pi_s\,\rd s.
\end{align*}
Substituting back into \eqref{eq:negloglikeItoHMM}, we obtain
\begin{align}
-&\log L_t(\gamma,\pi_0\,|\,\cY_t)\nonumber\\
&= \sum_{i=1}^d \bigg(-\int_0^t(h^i_s)^{\hspace{-1.5pt}\top}\hspace{-1pt}\pi_s\circ\rd Y^i_s + \frac{1}{2}\int_0^t\big|(h^i_s)^{\hspace{-1.5pt}\top}\hspace{-1pt}\pi_s\big|^2 + (h^i_s)^{\hspace{-1.5pt}\top}\hspace{-1pt}\big(H^i_s - (h^i_s)^{\hspace{-1.5pt}\top}\hspace{-1pt}\pi_sI\big)\pi_s\,\rd s\bigg)\nonumber\\
&= \sum_{i=1}^d \bigg(-\int_0^t(h^i_s)^{\hspace{-1.5pt}\top}\hspace{-1pt}\pi_s\circ\rd Y^i_s + \frac{1}{2}\int_0^t(h^i_s)^{\hspace{-1.5pt}\top}\hspace{-1pt}H^i_s\pi_s\,\rd s\bigg).\label{eq:negloglikeStratHMM}
\end{align}
Substituting \eqref{eq:priortheta} and \eqref{eq:negloglikeStratHMM} into \eqref{eq:penneglogpostHMM}, we obtain
\begin{equation}\label{eq:beta expression}
\beta_t(\gamma,\pi_0\,|\,\cY_t) = \int_0^tf(\pi_s,\gamma_s,\dg_s)\,\rd s + \int_0^t\psi(\pi_s,\gamma_s)\circ\rd Y_s + g(\pi_0,\gamma_0),
\end{equation}
where, for notational simplicity, we have introduced the functions $f \colon S^m \times \R^k \times \R^k \to \R$ and $\psi \colon S^m \times \R^k \to \cL(\R^d;\R)$, defined by
\begin{equation}\label{eq:defn f psi}
f(\pi,\gamma,\dg) = \frf(\pi,\gamma,\dg) + \frac{1}{2}\sum_{i=1}^d (h^i)^{\hspace{-1.5pt}\top}\hspace{-1pt}H^i\pi, \quad \text{and} \quad \psi^i(\pi,\gamma) = -(h^i)^{\hspace{-1.5pt}\top}\hspace{-1pt}\pi,\quad i = 1,\ldots,d.
\end{equation}

\subsection{Pathwise filtering}

As discussed above, we propose to evaluate the unreasonableness of different posterior distributions $x \in S^m$ and parameter values $a \in \R^k$ by minimizing the penalty $\beta_t(\gamma,\pi_0\,|\,\cY_t)$ in \eqref{eq:beta expression} over all choices of the parameters $\gamma, \pi_0$ which would have resulted in the distribution $\pi_t = x$ and value $\gamma_t = a$.

Of course, in practice this optimization should depend on the particular realization of the observation process $Y$ that we actually observe. Thus, we do not wish to optimize the expectation of \eqref{eq:beta expression}, but rather we wish to simultaneously optimize with respect to each individual realization of the process $Y$. This motivates a pathwise interpretation of the filtering equation.

We proceed as follows. We first fix a reference measure $\P^{\bar{\gamma},\bar{\pi}_0}$. We then enhance the observation process $Y$ using Stratonovich integration:
\begin{equation}\label{eq:HMMfilterStratlift}
\Y_{s,t} := \int_s^tY_{s,r}\otimes\circ\,\rd Y_r \qquad \text{for all} \quad (s,t) \in \simplex,
\end{equation}
defined under the measure $\P^{\bar{\gamma},\bar{\pi}_0}$, which we recall defines a random geometric rough path $\bY = (Y,\Y) \in \grp$ for any $p \in (2,3)$.

Recall the Stratonovich filtering equation \eqref{eq:filtereqnStrat}. For notational simplicity, we rewrite this equation in the form
\begin{equation}\label{eq:filter Strat gamma}
\rd \pi_s = b(\pi_s,\gamma_s)\hspace{1pt}\rd s + \phi(\pi_s,\gamma_s) \circ \rd Y_s,
\end{equation}
where the coefficients $b \colon S^m \times \R^k \to \R^m$ and $\phi \colon S^m \times \R^k \to \cL(\R^d;\R^m)$ are chosen such as to make this equation consistent with \eqref{eq:filtereqnStrat}. Since the parameterisation $\R^k \to \cA \times \cH$ is of class $C^3_b$ (by Assumption~\ref{assumption uncertainty parameterisation}), we immediately also have that $b \in \Lipb$ and $\phi \in C^3_b$.

We note that in general the measures $\P^{\gamma,\pi_0}$ are not necessarily equivalent on $\cF_t$ (as different choices of the rate matrix $A$ may have different patterns of zero entries), and hence the \emph{completed} filtration $\cY_t = \sigma(Y_s : s \in [0,t]) \vee \cN$ may depend on the choice of $\gamma \in \sA$. However, since the integrand in \eqref{eq:HMMfilterStratlift} is trivially $\sigma(Y_s : s \in [0,t])$-adapted, the process $\Y$ coincides almost surely with the same integral defined under any other choice of measure $\P^{\gamma,\pi_0}$ (even though the corresponding completed filtrations $\cY_t$ may not agree).

Thus, defining $\pi$ as the solution of the RDE
\begin{equation}\label{eq:HMMfilterRDE}
\rd \pi_s = b(\pi_s,\gamma_s)\hspace{1pt}\rd s + \phi(\pi_s,\gamma_s) \, \rd \bY_s
\end{equation}
which exists by part (i) of Theorem~\ref{theoremHMMRDE}, then, for each choice of parameters $(\gamma,\pi_0)$, the corresponding solution $\pi$ of \eqref{eq:HMMfilterRDE} is indistinguishable from the solution of the Stratonovich equation \eqref{eq:filter Strat gamma} defined under $\P^{\gamma,\pi_0}$, and moreover the rough integral
$$\int_0^t \psi(\pi_s,\gamma_s) \,\rd \bY_s$$
coincides almost surely with the stochastic integral
$$\int_0^t \psi(\pi_s,\gamma_s) \circ \rd Y_s$$
(where we recall that $\psi \in C^3_b$ was defined in \eqref{eq:defn f psi}).

In particular, for each fixed (enhanced) realization $\bY_{[0,t]} = (Y|_{[0,t]}(\omega),\Y|_{\Delta_{[0,t]}}(\omega))$ of the observation process, we obtain an associated rough path $\bY \in \grp$, and we can write the penalty corresponding to this realization as
\begin{equation}\label{eq:beta pathwise}
\beta_t(\gamma,\pi_0\,|\,\bY_{[0,t]}) := \int_0^tf(\pi_s,\gamma_s,\dg_s)\,\rd s + \int_0^t\psi(\pi_s,\gamma_s)\,\rd \bY_s + g(\pi_0,\gamma_0).
\end{equation}

As discussed above, we propose to evaluate the reasonableness of posterior distributions $x \in S^m$ and parameter values $a \in \R^k$ by determining the most reasonable choice of the parameters $\gamma, \pi_0$ which would have resulted in the posterior $\pi_t = x$ and parameter value $\gamma_t = a$ at time $t$. The `unreasonableness' of each pair $(x,a) \in S^m \times \R^k$ is given by the functional $\kappa \colon [0,T] \times S^m \times \R^k \to \R$, defined by\footnote{Here and throughout, we adopt the convention that $\inf \emptyset = \infty$. For clarity we suppress the dependency of $\kappa$ on the observation path $\bY_{[0,t]}$ in our notation.}
\begin{equation}\label{eq:defnkappaHMM}
\kappa(t,x,a) = \inf\big\{\beta_t(\gamma,\pi_0\,|\,\bY_{[0,t]})\ \big|\ (\gamma,\pi_0) \in \sA \times S^m \hspace{6pt} \text{such that} \hspace{6pt} (\pi_t,\gamma_t) = (x,a)\big\},
\end{equation}
where the infimum is taken over all $\gamma \in \sA$ and $\pi_0 \in S^m$ such that $\gamma$ satisfies $\gamma_t = a$ and the solution $\pi$ of \eqref{eq:HMMfilterRDE} takes the terminal value $\pi_t = x$.

\subsection{Interpretation}

At each time $t$, the function $(x,a) \mapsto \kappa(t,x,a)$ encodes our opinion of how reasonable each posterior distribution $x \in S^m$ and each parameter value $a \in \R^k$ is at time $t$, given our observations. Thus, the map $t \mapsto \kappa(t,\cdot,\cdot)$ describes the propagation of our uncertainty through time.

Since $\kappa(t,x,a)$ measures the unreasonability of posteriors and parameter values, we obtain a filter which is robust to uncertainty by simply taking the minimum of $\kappa(t,\cdot,\cdot)$. That is, the most reasonable parameter values at each time are given by
\begin{equation}\label{eq:argmin a kappa}
t \, \longmapsto \, \argmin_{a \in \R^k} \inf_{x \in S^m} \kappa(t,x,a),
\end{equation}
and the most reasonable posteriors at each time are similarly given by
\begin{equation}\label{eq:argmin x kappa}
t \, \longmapsto \, \argmin_{x \in S^m} \inf_{a \in \R^k} \kappa(t,x,a).
\end{equation}

As noted above, the penalty function $\beta_t$ is defined up to an additive constant, which depends on time $t$ but does not depend on the choice of parameters $\gamma, \pi_0$. Similarly, the interpretation of the function $\kappa(t,\cdot,\cdot)$ is not affected by additive constants, and in practice it is therefore natural to shift the values of $\kappa$ so that $\inf_{(x,a)} \kappa(t,x,a) = 0$ for every $t \geq 0$. In particular, given a threshold $\lambda > 0$, one can then define a set of reasonable parameter values (or similarly posteriors) by setting
$$R^\lambda_t = \big\{a \in \R^k \ \big| \ \kappa(t,x,a) < \lambda \hspace{7pt} \text{for some} \hspace{7pt} x \in S^m\big\},$$
with the most reasonable parameter values, as in \eqref{eq:argmin a kappa}, being recovered as $\lim_{\lambda \to 0} R^\lambda_t$.

Analogously to Cohen \cite{Cohen2020}, one can also define an associated DR-expectation by setting
\begin{equation}\label{eq:nonlinexpHMM}
\cE(\vp(X_t)\,|\,\cY_t) = \esssup_{(\gamma,\pi_0) \in \sA \times S^m}\bigg\{\E^{\gamma,\pi_0}[\vp(X_t)\,|\,\cY_t] - \bigg(\frac{1}{k_1}\beta_t(\gamma,\pi_0\,|\,\cY_t)\bigg)^{\hspace{-2.5pt}k_2}\bigg\},
\end{equation}
defined for every functional $\vp \colon \cX \to \R$. Here, $k_1 > 0$ is an uncertainty aversion parameter, and the exponent $k_2 \geq 1$ is a shape parameter. As mentioned in the introduction, such expectations allow one to compute evaluations of random variables which penalize uncertainty, whilst retaining many of the natural properties one would expect from an expectation.

With $\pi$ defined as above, i.e.~as the solution of the rough filtering equation \eqref{eq:HMMfilterRDE}, it follows that
$$\sum_{j = 1}^m \pi_{j,t}\vp(e_j)$$
is a version of the conditional expectation $\E^{\gamma,\pi_0}[\vp(X_t)\,|\,\cY_t]$ for every choice of parameters and every functional $\vp$. Choosing this version, the nonlinear expectation in \eqref{eq:nonlinexpHMM} evaluated on a particular (enhanced) observation path $\bY_{[0,t]}$ is given by
\begin{align*}
\cE(\vp(X_t)\,|\,\bY_{[0,t]}&) = \sup_{(\gamma,\pi_0) \in \sA \times S^m} \bigg\{\sum_{j = 1}^m \pi_{j,t}\vp(e_j) - \bigg(\frac{1}{k_1} \beta_t(\gamma,\pi_0\,|\,\bY_{[0,t]})\bigg)^{\hspace{-2.5pt}k_2}\bigg\}.
\end{align*}

\begin{lemma}\label{lemmaDRintermskappa}
For any $\bY \in \grp$, $t \in [0,T]$ and functional $\vp \colon \cX \to \R$, we have that
\begin{equation}\label{eq:lem nonlinexp kappa}
\cE(\vp(X_t)\,|\,\bY_{[0,t]}) = \sup_{x \in S^m} \bigg\{\sum_{j=1}^m x_j\vp(e_j) - \bigg(\frac{1}{k_1}\inf_{a \in \R^k} \kappa(t,x,a)\bigg)^{\hspace{-2.5pt}k_2}\bigg\}
\end{equation}
where $\kappa$ is the function defined in \eqref{eq:defnkappaHMM}.
\end{lemma}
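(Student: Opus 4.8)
The plan is to derive \eqref{eq:lem nonlinexp kappa} by a \emph{disintegration} of the supremum defining $\cE(\vp(X_t)\,|\,\bY_{[0,t]})$, organised according to the terminal data $(\pi_t,\gamma_t)$, combined with the elementary fact that $z \mapsto (z/k_1)^{k_2}$ is continuous and non-decreasing on $[0,\infty]$ (with the convention $(\infty/k_1)^{k_2} = \infty$). After the normalisation discussed in Remark~\ref{remark HMM ignore const} we may and do assume $\beta_t(\gamma,\pi_0\,|\,\bY_{[0,t]}) \geq 0$, so that all powers below are well defined. Throughout, $\bY_{[0,t]} \in \grp$ is a fixed realisation, and we use that $\sum_{j=1}^m \pi_{j,t}\vp(e_j)$ is a version of $\E^{\gamma,\pi_0}[\vp(X_t)\,|\,\cY_t]$, so that the pathwise DR-expectation is the genuine supremum
$$\cE(\vp(X_t)\,|\,\bY_{[0,t]}) = \sup_{(\gamma,\pi_0)\in\sA\times S^m}\bigg\{\sum_{j=1}^m \pi_{j,t}\vp(e_j) - \Big(\tfrac{1}{k_1}\beta_t(\gamma,\pi_0\,|\,\bY_{[0,t]})\Big)^{k_2}\bigg\}.$$

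First I would rewrite this as an iterated supremum. Each $(\gamma,\pi_0)\in\sA\times S^m$ determines a unique pair $(x,a) := (\pi_t,\gamma_t)\in S^m\times\R^k$, where $\pi$ solves the RDE \eqref{eq:HMMfilterRDE} with parameters $(\gamma,\pi_0)$; grouping parameters by this pair partitions $\sA\times S^m$ into classes (some possibly empty). Since $\sum_j\pi_{j,t}\vp(e_j) = \sum_j x_j\vp(e_j)$ depends on $(\gamma,\pi_0)$ only through $x$, it may be pulled out of the inner supremum, giving
$$\cE(\vp(X_t)\,|\,\bY_{[0,t]}) = \sup_{(x,a)\in S^m\times\R^k}\bigg\{\sum_{j=1}^m x_j\vp(e_j) - \inf_{\substack{(\gamma,\pi_0):\\(\pi_t,\gamma_t)=(x,a)}}\Big(\tfrac{1}{k_1}\beta_t(\gamma,\pi_0\,|\,\bY_{[0,t]})\Big)^{k_2}\bigg\},$$
where, by the convention $\inf\emptyset = \infty$, an empty constraint class contributes the value $-\infty$ and hence does not affect the outer supremum.

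Next, since $z\mapsto (z/k_1)^{k_2}$ is non-decreasing and continuous, it commutes with infima, so the inner infimum equals $\big(\tfrac{1}{k_1}\inf_{(\gamma,\pi_0):(\pi_t,\gamma_t)=(x,a)}\beta_t(\gamma,\pi_0\,|\,\bY_{[0,t]})\big)^{k_2} = \big(\tfrac{1}{k_1}\kappa(t,x,a)\big)^{k_2}$ by the very definition \eqref{eq:defnkappaHMM} of $\kappa$ (this identity remains valid when the class is empty, both sides then being $\infty$). Finally I would split the outer supremum as $\sup_{x\in S^m}\sup_{a\in\R^k}$; as $\sum_j x_j\vp(e_j)$ does not depend on $a$, the inner supremum acts only on $-\big(\tfrac{1}{k_1}\kappa(t,x,a)\big)^{k_2}$, and applying the same monotonicity once more gives $\sup_{a}\big(-\big(\tfrac{1}{k_1}\kappa(t,x,a)\big)^{k_2}\big) = -\big(\tfrac{1}{k_1}\inf_{a}\kappa(t,x,a)\big)^{k_2}$, which is exactly \eqref{eq:lem nonlinexp kappa}.

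The argument is essentially pure bookkeeping, so there is no serious obstacle; the only points needing care are (i) that $\beta_t\geq 0$ after normalisation, without which the powers $(\cdot)^{k_2}$ and the interchanges above are not meaningful, and (ii) the uniform treatment of the value $+\infty$, which arises both from pairs $(x,a)$ unattainable by any admissible $(\gamma,\pi_0)$ (so that $\kappa(t,x,a)=\infty$) and from the associated $-\infty$ contributions to the suprema — all handled consistently by the continuity of $z\mapsto (z/k_1)^{k_2}$ on the extended half-line $[0,\infty]$.
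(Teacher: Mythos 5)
Your proposal is correct and follows essentially the same route as the paper's proof: group the parameters by their terminal data $(\pi_t,\gamma_t)=(x,a)$, use monotonicity of $z\mapsto(z/k_1)^{k_2}$ to convert the constrained supremum into $\sum_j x_j\vp(e_j)-\big(\tfrac{1}{k_1}\kappa(t,x,a)\big)^{k_2}$, and then take the outer supremum, collapsing $\sup_a$ into $-\big(\tfrac{1}{k_1}\inf_a\kappa\big)^{k_2}$. Your additional care about the normalisation $\beta_t\geq 0$ and the consistent treatment of $+\infty$ is a harmless elaboration of what the paper leaves implicit.
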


\begin{proof}
For any $(x,a) \in S^m \times \R^k$, we observe that
\begin{align*}
&\sup \bigg\{\sum_{j = 1}^m \pi_{j,t}\vp(e_j) - \bigg(\frac{1}{k_1} \beta_t(\gamma,\pi_0\,|\,\bY_{[0,t]}\bigg)^{\hspace{-2.5pt}k_2}\, \bigg|\, \genfrac{}{}{0pt}{}{(\gamma,\pi_0) \in \sA \times S^m \hspace{6pt} \text{such that}}{(\pi_t,\gamma_t) = (x,a)}\bigg\}\\
&= \sum_{j=1}^m x_j\vp(e_j) - \bigg(\frac{1}{k_1}\kappa(t,x,a)\bigg)^{\hspace{-2.5pt}k_2}.
\end{align*}
We then obtain \eqref{eq:lem nonlinexp kappa} upon taking the supremum over $(x,a) \in S^m \times \R^k$.
\end{proof}

Assuming that the function $\kappa(t,\cdot,\cdot)$ has a unique minimum point, and that it has been shifted if necessary so that it takes the value zero at this point, it follows from Lemma~\ref{lemmaDRintermskappa} that
\begin{equation*}
\cE(\vp(X_t)\,|\,\bY_{[0,t]}) \, \longrightarrow \, \sum_{j=1}^m \hat{x}_{j,t}\vp(e_j) \qquad \text{as} \quad k_1 \, \longrightarrow \, 0,
\end{equation*}
where $\hat{x}_t$ is the most reasonable posterior value at time $t$, as in \eqref{eq:argmin x kappa}.

\medskip

The key insight of our approach is that the function $\kappa$, as defined in \eqref{eq:defnkappaHMM}, has the form of the value function of an optimal control problem. To make this precise we introduce:
\begin{notation}
We denote by $\cU$ the space of bounded measurable paths $u \colon [0,T] \to \R^k$.
\end{notation}

Recalling \eqref{eq:beta pathwise}, we rewrite \eqref{eq:defnkappaHMM} as
\begin{align}
&\kappa(t,x,a)\label{eq:kappainfA}\\
&= \inf_{u \in \cU} \bigg\{\int_0^tf(\pi^{t,x,a,u}_s,\gamma^{t,a,u}_s,u_s)\,\rd s + \int_0^t\psi(\pi^{t,x,a,u}_s,\gamma^{t,a,u}_s)\,\rd \bY_s + g(\pi^{t,x,a,u}_0,\gamma^{t,a,u}_0)\bigg\},\nonumber
\end{align}
where we now interpret $\dg^{t,a,u} = u \in \cU$ as a control, and $\pi^{t,x,a,u}, \gamma^{t,a,u}$ as the state variables, which satisfy the controlled dynamics given by
\begin{align*}
\rd \pi^{t,x,a,u}_s &= b(\pi^{t,x,a,u}_s,\gamma^{t,a,u}_s)\hspace{1pt}\rd s + \phi(\pi^{t,x,a,u}_s,\gamma^{t,a,u}_s) \, \rd \bY_s, & \pi^{t,x,a,u}_t &= x,\\
\rd \gamma^{t,a,u}_s &= u_s\,\rd s, & \gamma^{t,a,u}_t &= a.
\end{align*}

One should take care here to exclude unphysical trajectories. That is, given a path $\bY$ and terminal condition $(\pi_t,\gamma_t) = (x,a)$, there may exist choices of control $u \in \cU$ for which the solution $\pi = (\pi_s)_{s \in [0,t]}$ of \eqref{eq:HMMfilterRDE} leaves the domain $S^m$. Such tuples $(t,x,a,u)$ do not correspond to a physical initial value $\pi_0$, and should thus be discarded.

\begin{example}
Let us for example suppose that $m = 2$, $d = 1$, and consider the rate and observation matrices where
$$A_t = \bigg(\hspace{-4pt}\begin{array}{cc}
-\lambda & \mu\\
\lambda & -\mu
\end{array}\hspace{-4pt}\bigg), \qquad h_t = \bigg(\hspace{-4pt}\begin{array}{c}
-\alpha\\
\alpha
\end{array}\hspace{-4pt}\bigg) \qquad \text{for all} \quad t \in [0,T],$$
for some $\lambda, \mu, \alpha > 0$. In this case the second component $\pi_2$ of the filter $\pi = (\pi_1,\pi_2)^{\hspace{-1pt}\top}$ satisfies
$$\rd\pi_2 = \big(\lambda(1 - \pi_2) - \mu\pi_2\big)\rd s + 2\alpha\pi_2(1 - \pi_2)\hspace{1pt}\rd \bY_s.$$
Choosing a terminal condition close to the boundary so that $\pi_t = x \simeq (1,0)^{\hspace{-1pt}\top}$, we then have
$$\rd\pi_{2,s} \simeq \lambda\,\rd s, \qquad \text{with} \quad \pi_{2,t} \simeq 0.$$
Since $\lambda > 0$, it is then inevitable that $\pi_{2,s} < 0$ for some $s < t$, and hence that $\pi_s \notin S^2$.
\end{example}

Although we don't obtain an initial value $\pi_0 \in S^m$ for trajectories which leave the domain, we can simply assign an infinite initial cost to all such trajectories, so that the corresponding controls are never considered when taking the infimum in \eqref{eq:kappainfA}.

Moreover, in general there will exist terminal conditions $(t,x,a)$ for which \emph{every} choice of control $u \in \cU$ results in a trajectory $\pi = (\pi_s)_{s \in [0,t]}$ which leaves the domain $S^m$. These are posteriors $x \in S^m$ which do not correspond to any pair of parameters $(\gamma,\pi_0) \in \sA \times S^m$, and which are therefore totally implausible given our observations up to the current time. In these cases we have simply $\kappa(t,x,a) = \infty$.

\begin{notation}\label{notation Qt cD}
Let $t \in [0,T]$ and $a \in \R^k$. We denote the set of `plausible' posteriors at time $t$ (for which at least one physical trajectory exists) by
\begin{equation*}
Q_t := \big\{x \in S^m \ \big| \ \kappa(t,x,a) < \infty\big\} = \big\{x \in S^m \ \big| \ \exists u \in \cU \hspace{7pt} \text{such that} \hspace{7pt} \pi^{t,x,a,u}_0 \in S^m\big\}.
\end{equation*}
Since we impose no uniform bound on the controls $u \in \cU$, it is easy to deduce that if $\kappa(t,x,a) < \infty$ for some $a \in \R^k$, then in fact $\kappa(t,x,a) < \infty$ for \emph{all} $a \in \R^k$. Thus, the set $Q_t$ defined above is independent of the choice of $a \in \R^k$. We will also denote by
\begin{equation}\label{eq:defn domain D}
\cD := \bigcup_{t \in [0,T]} (\{t\} \times Q_t \times \R^k)
\end{equation}
the domain on which $\kappa$ is finite.
\end{notation}

It is clear that $Q_0 = S^m$. Moreover, the domain $Q_t$, which is easily seen to be an open subset of $S^m$, does not depend on the choice of the functions $f$ and $g$, but it does depend on the space $\cA \times \cH$ and on the realization of the observation path $\bY_{[0,t]}$. The boundary $t \mapsto \pa Q_t$ therefore also inherits the roughness of $\bY$.

\begin{remark}
The fact that the set of plausible posteriors $Q_t$ is in general a proper subset of $S^m$ should not be too surprising. In particular, in the degenerate case with no uncertainty, so that $\cA \times \cH$ is just the singleton $\{(A^{\text{true}},h^{\text{true}})\}$ and the initial distribution $\pi_0 = \pi_0^{\text{true}}$ is known, the set $Q_t$ reduces to the singleton $\{\pi^{\text{true}}_t\}$, where $\pi^{\text{true}}$ is the filter corresponding to the true parameters. Moreover, we cannot expect all posteriors to be plausible (i.e.~reachable by at least one filter trajectory) without an assumption of irreducibility on the admissible rate matrices.
\end{remark}

\begin{remark}
Although in general the domain $Q_t$ is a proper subset of $S^m$, there are cases in which $Q_t = S^m$ (so that all posteriors $x \in S^m$ are considered to be plausible) at every time $t$. We will see an example of this in Section~\ref{subsec uncertain rate matrix}.
\end{remark}

\section{An unbounded pathwise control problem}\label{sec pathwise control problem}

In the previous section we formulated an optimal control problem, which for convenience we restate here. We have the value function
\begin{align}
&\kappa(t,x,a)\label{eq:valuefuncHMM}\\
&= \inf_{u \in \cU} \bigg\{\int_0^t f(\pi^{t,x,a,u}_s,\gamma^{t,a,u}_s,u_s)\,\rd s + \int_0^t \psi(\pi^{t,x,a,u}_s,\gamma^{t,a,u}_s)\,\rd \bY_s + g(\pi^{t,x,a,u}_0,\gamma^{t,a,u}_0)\bigg\},\nonumber
\end{align}
for $(t,x,a) \in \cD$ (as defined in \eqref{eq:defn domain D}), where the state variables $\pi^{t,x,a,u}, \gamma^{t,a,u}$ satisfy the controlled dynamics:
\begin{align}
\rd \pi^{t,x,a,u}_s &= b(\pi^{t,x,a,u}_s,\gamma^{t,a,u}_s)\hspace{1pt}\rd s + \phi(\pi^{t,x,a,u}_s,\gamma^{t,a,u}_s) \, \rd \bY_s, & \pi^{t,x,a,u}_t &= x,\label{eq:contrdynamicsHMM}\\
\rd \gamma^{t,a,u}_s &= u_s\,\rd s, & \gamma^{t,a,u}_t &= a.\nonumber
\end{align}
We recall that, under Assumption~\ref{assumption uncertainty parameterisation}, we have that $b \in \Lipb$ and $\phi \in C^3_b$, so that the RDE \eqref{eq:contrdynamicsHMM} has a unique solution by part~(i) of Theorem~\ref{theoremHMMRDE}.

As discussed in the previous section, in \eqref{eq:valuefuncHMM} we assign an infinite initial cost to all trajectories $\pi^{t,x,a,u}$ which leave the domain $S^m$ at any time $s < t$. We will sometimes omit the superscripts on the state variables when no confusion is likely to occur.

\subsection{Observations and assumptions}

We begin with some observations. First, we note that this is a `backward' control problem, in the sense that we prescribe a terminal condition $(\pi_t,\gamma_t) = (x,a)$ for the state trajectories and, for each choice of control $u$, solve the controlled dynamics backwards in time to obtain the corresponding initial values $(\pi_0,\gamma_0)$.

More significantly, here we wish to perform the optimization for every fixed (enhanced) realization $\bY_{[0,T]}$ of the stochastic process $Y$. This type of problem is known as `pathwise stochastic control'. In fact, we have formulated our problem in terms of the optimal control of a rough differential equation, which we wish to perform for an arbitrary geometric rough path $\bY \in \grp$. Control problems of this type were first studied by Diehl et al.~\cite{DiehlFrizGassiat2017}, and subsequently by Allan and Cohen \cite{AllanCohen2020}.

Moreover, the control problem stated above is unbounded, in the sense that, as we will see, the value function $\kappa(t,x,a)$ `blows up' for values of $x$ which are close to the boundary of $Q_t$, and also for very large values of $a$. This is because such values of $x$ and $a$ are considered to be very unreasonable, and are thus assigned a very large cost.

\begin{notation}
Writing $\overline{S}^m$ for the closure of $S^m$ in $\R^m$, we denote by $\pa S^m := \overline{S}^m \setminus S^m$ the boundary of the domain $S^m$. We then denote by
$$d(x,\pa S^m) := \inf_{y \in \pa S^m} |x - y|$$
the distance of a point $x \in S^m$ to the boundary $\pa S^m$.

We write $C^\uparrow(S^m \times \R^k;\R)$ for the space of continuous functions $\tilde{g} \colon S^m \times \R^k \to \R$ which explode near the boundary of the domain $S^m$, and also for extreme values of $\R^k$, that is, functions such that
\begin{align*}
\inf_{a \in \R^k} \tilde{g}(x,a) \, \longrightarrow \, \infty \qquad &\text{as} \quad d(x,\pa S^m) \, \longrightarrow \, 0, \quad \text{and}\\
\inf_{x \in S^m} \tilde{g}(x,a) \, \longrightarrow \, \infty \qquad &\text{as} \quad |a| \, \longrightarrow \, \infty.
\end{align*}
\end{notation}

\begin{assumption}\label{assumptions f g}
We assume that
\begin{enumerate}[(i)]
\item the running cost $f = f(x,a,u)$ and the initial cost $g = g(x,a)$ are continuous, bounded below, and locally Lipschitz continuous in $(x,a)$ uniformly in $u$,
\item the running cost is superlinear in $u$, in the sense that
\begin{equation*}
\inf_{(x,a) \in S^m \times \R^k} \frac{f(x,a,u)}{|u|} \, \longrightarrow \, \infty \qquad \text{as} \quad |u| \, \longrightarrow \, \infty,
\end{equation*}
\item and that $g \in C^\uparrow(S^m \times \R^k;\R)$.
\end{enumerate}
\end{assumption}

\begin{notation}\label{notation lesssim}
Let $L > 0$ such that $\ver{\bY}_{p,[0,T]} \leq L$. In the following we shall use $\lesssim$ to denote inequality up to a multiplicative constant which may depend on any of the dimensions $m, d, k$, the functions $b, \phi, \psi, f, g$, the measure of regularity $p \in (2,3)$, the bound $L$, and the terminal time $T$.
\end{notation}

\begin{lemma}\label{lemma value func loc bdd}
Under Assumptions~\ref{assumption uncertainty parameterisation} and \ref{assumptions f g}, the value function $\kappa$ is bounded below, and locally bounded above on $\cD$.
\end{lemma}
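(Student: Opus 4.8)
The claim has two parts: a lower bound and a local upper bound for $\kappa$ on $\cD$.

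For the \textbf{lower bound}, the plan is to use that $f$ and $g$ are bounded below (Assumption~\ref{assumptions f g}(i) and (iii)), so the only term in \eqref{eq:valuefuncHMM} that could send $\kappa$ to $-\infty$ is the rough integral $\int_0^t\psi(\pi_s,\gamma_s)\,\rd\bY_s$. I would control this using the growth estimate \eqref{eq:HMMroughintpsibound} from part~(ii) of Theorem~\ref{theoremHMMRDE}, applied with the controlled path generated by \eqref{eq:contrdynamicsHMM}. This yields
$$\Big|\int_0^t\psi(\pi^{t,x,a,u}_s,\gamma^{t,a,u}_s)\,\rd\bY_s\Big| \lesssim 1 + \|\gamma^{t,a,u}\|_{\frac{p}{2},[0,t]}^{\frac{p-1}{2}},$$
and since $\gamma^{t,a,u}_s = a + \int_t^s u_r\,\rd r$ with $u \in \cU$ bounded, we have $\|\gamma^{t,a,u}\|_{\frac{p}{2},[0,t]} \leq \|\dot\gamma\|_{1,[0,t]} = \int_0^t|u_s|\,\rd s$, which is finite for each admissible control. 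The point is that this rough-integral term is then dominated by the superlinear running cost: by Assumption~\ref{assumptions f g}(ii), for any $\epsilon > 0$ there is $C_\epsilon$ with $f(x,a,u) \geq \epsilon|u| - C_\epsilon$, hence $\int_0^t f(\pi_s,\gamma_s,u_s)\,\rd s \geq \epsilon\int_0^t|u_s|\,\rd s - C_\epsilon T$. Choosing $\epsilon$ appropriately (using that $\frac{p-1}{2} < 1$), the running cost absorbs the $\|\gamma^{t,a,u}\|_{p/2}^{(p-1)/2}$ term uniformly in $u$, giving a lower bound on the bracket in \eqref{eq:valuefuncHMM} independent of $u$, hence on $\kappa(t,x,a)$.

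For the \textbf{local upper bound}, I would fix $(t_0,x_0,a_0) \in \cD$ and exhibit, for all $(t,x,a)$ in a neighbourhood within $\cD$, a single control (or a cheaply-parametrised family of controls) whose cost stays bounded. Since $(t_0,x_0,a_0) \in \cD$, there is some $u^* \in \cU$ with $\pi^{t_0,x_0,a_0,u^*}_0 \in S^m$; running the dynamics backward gives an initial datum in the open set $S^m$, and by the continuity estimates \eqref{eq:RDEcontXglobal} (and its reverse-time version, Corollary~\ref{corollary reverse time}) the solution $\pi^{t,x,a,u^*}$ depends continuously on $(t,x,a)$ in $p$-variation, so it stays in a compact subset of $S^m$ for $(t,x,a)$ near $(t_0,x_0,a_0)$ — in particular no trajectory escapes $S^m$, so the initial cost is finite. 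Then $g(\pi^{t,x,a,u^*}_0,\gamma^{t,a,u^*}_0)$ is bounded by continuity of $g$; the running-cost term $\int_0^t f(\pi_s,\gamma_s,u^*_s)\,\rd s$ is bounded since $f$ is continuous, $u^*$ is a fixed bounded path, and the state trajectories live in a compact set; and the rough-integral term is bounded by \eqref{eq:HMMroughintpsibound} with $\|\gamma^{t,a,u^*}\|_{p/2}$ controlled by the fixed $\|u^*\|_{L^1}$. Summing these gives $\kappa(t,x,a) \leq$ a constant on the neighbourhood.

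\textbf{Main obstacle.} The delicate point is the upper bound: one must ensure that the \emph{same} control $u^*$ remains admissible (i.e.\ keeps $\pi$ inside the open simplex $S^m$ at all times) for all $(t,x,a)$ in a full neighbourhood of $(t_0,x_0,a_0)$, not merely at the centre point. This is where the time-dependent, rough nature of the domain matters — one cannot simply appeal to classical ODE flow continuity. The resolution is to use the uniform-in-parameters continuity estimate \eqref{eq:RDEcontXglobal} together with Corollary~\ref{corollary reverse time} to get a $p$-variation (hence sup-norm) bound on $\pi^{t,x,a,u^*} - \pi^{t_0,x_0,a_0,u^*}$ that is small when $(t,x,a)$ is close to $(t_0,x_0,a_0)$; since $\pi^{t_0,x_0,a_0,u^*}$ has image in a compact subset of the open set $S^m$, a sufficiently small sup-norm perturbation stays in $S^m$. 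A minor additional subtlety is that varying $t$ changes the integration horizon, so the comparison should be set up on a common interval (e.g.\ by extending controls, or comparing on $[0,t\wedge t_0]$ and handling the short remaining piece separately using the local estimates); this is routine given the global estimates already available.
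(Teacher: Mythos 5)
Your proposal is correct and follows essentially the same route as the paper: the lower bound is obtained by controlling the rough integral via Theorem~\ref{theoremHMMRDE}(ii) and absorbing the resulting $\int_0^t|u_s|\,\rd s$-type term into the superlinear running cost, and the local upper bound by fixing an admissible control at a reference point and using the stability estimates (Corollary~\ref{corollary reverse time}) to show it remains admissible with uniformly bounded cost on a neighbourhood. The paper merely phrases the upper bound via a finite subcover of an arbitrary compact subset of $\cD$ rather than pointwise neighbourhoods, which is equivalent.
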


\begin{proof}
\textbf{Step 1.}
By part (ii) of Theorem~\ref{theoremHMMRDE}, we have
\begin{equation*}
\bigg|\int_0^t\psi(\pi^{t,x,a,u}_s,\gamma^{t,a,u}_s)\,\rd\bY_s\bigg| \leq \bigg\|\int_0^\cdot\psi(\pi^{t,x,a,u}_s,\gamma^{t,a,u}_s)\,\rd\bY_s\bigg\|_{p,[0,t]} \lesssim 1 + \|\gamma^{t,a,u}\|_{\ptvarzt}^{\frac{p - 1}{2}}.
\end{equation*}
We then note that
\begin{equation}\label{eq:bound gamma int u}
\|\gamma^{t,a,u}\|_{\ptvarzt}^{\frac{p - 1}{2}} \leq \|\gamma^{t,a,u}\|_{\onevarzt}^{\frac{p - 1}{2}} = \bigg(\int_0^t |u_s|\,\rd s\bigg)^{\hspace{-2.5pt}\frac{p - 1}{2}} \lesssim \int_0^t |u_s|^{\frac{p - 1}{2}}\,\rd s.
\end{equation}
By part (ii) of Assumption~\ref{assumptions f g}, for any $\varepsilon > 0$ we infer the existence of a constant $C_\epsilon$ such that
\begin{equation}\label{eq:Afepsilonbound}
|u|^{\frac{p - 1}{2}} \leq C_\epsilon + \epsilon f(x,a,u)
\end{equation}
for all $(x,a,u) \in S^m \times \R^k \times \R^k$. Choosing $\epsilon$ sufficiently small, we deduce that
\begin{equation}\label{eq:HMMpsiintlocbound}
\bigg|\int_0^t\psi(\pi^{t,x,a,u}_s,\gamma^{t,a,u}_s)\,\rd\bY_s\bigg| \leq C + \frac{1}{2}\int_0^t f(\pi^{t,x,a,u}_s,\gamma^{t,a,u}_s,u_s)\,\rd s
\end{equation}
for some new constant $C$. Thus, recalling \eqref{eq:valuefuncHMM}, we have
\begin{equation}\label{eq:kappaboundedbelow}
\kappa(t,x,a) \geq \inf_{u \in \cU} \bigg\{\frac{1}{2} \int_0^t f(\pi^{t,x,a,u}_s,\gamma^{t,a,u}_s,u_s)\,\rd s - C + g(\pi^{t,x,a,u}_0,\gamma^{t,a,u}_0)\bigg\}
\end{equation}
and, since $f$ and $g$ are bounded below, the same is true of $\kappa$.

\textbf{Step 2.}
Now let $\Delta$ be a compact subset of the domain $\cD$. By the definition of $\cD$, for each point $(\bar{t},\bar{x},\bar{a}) \in \Delta$ there exists a control $\bar{u} \in \cU$ such that $\pi^{\bar{t},\bar{x},\bar{a},\bar{u}}_s \in S^m$ for all $s \in [0,\bar{t}]$. Moreover, by continuity, there exists a compact subset $\Xi^{\bar{t},\bar{x},\bar{a}}$ of $S^m$ and an open neighbourhood $\cO^{\bar{t},\bar{x},\bar{a}}$ of $(\bar{t},\bar{x},\bar{a})$ such that $\pi^{t,x,a,\bar{u}}_s \in \Xi^{\bar{t},\bar{x},\bar{a}}$ for all $(t,x,a) \in \cO^{\bar{t},\bar{x},\bar{a}}$ and all $s \in [0,t]$.

Since $\{\cO^{\bar{t},\bar{x},\bar{a}} : (\bar{t},\bar{x},\bar{a}) \in \Delta\}$ is an open cover for the compact set $\Delta$, there exists a finite collection of points $(\bar{t},\bar{x},\bar{a}) \in \Delta$ and corresponding controls $\bar{u}$ such that $\Delta \subset \cup_{(\bar{t},\bar{x},\bar{a})} \cO^{\bar{t},\bar{x},\bar{a}}$. The finite union $\Xi := \cup_{(\bar{t},\bar{x},\bar{a})} \Xi^{\bar{t},\bar{x},\bar{a}}$ is clearly compact. Moreover, we have shown that: for any $(t,x,a) \in \Delta$ there exists a control $\bu \in \cU$ from our finite collection such that $\pi^{t,x,a,\bu}_s \in \Xi$ for all $s \in [0,t]$.

Since each control $\bar{u} \colon [0,T] \to \R^k$ is bounded, the finite collection of controls specified above is uniformly bounded. Thus, there exists a compact set $K \subset S^m \times \R^k$ such that, for any $(t,x,a) \in \Delta$, there exists a control $\bu \in \cU$ such that $(\pi^{t,x,a,\bu}_s,\gamma^{t,a,\bu}_s) \in K$ for all $s \in [0,t]$.

Since $f$ and $g$ are assumed to be continuous, they are locally bounded, and hence bounded on $K$. Thus, using \eqref{eq:HMMpsiintlocbound} again, we have
\begin{equation*}
\sup_{(t,x,a) \in \Delta} \kappa(t,x,a) \leq \sup_{(t,x,a) \in \Delta} \bigg(\frac{3}{2}\int_0^t f(\pi^{t,x,a,\bu}_s,\gamma^{t,a,\bu}_s,\bu_s)\,\rd s + C + g(\pi^{t,x,a,\bu}_0,\gamma^{t,a,\bu}_0)\bigg) < \infty,
\end{equation*}
so that $\kappa$ is bounded above on $\Delta$, and hence locally bounded above on $\cD$.
\end{proof}

\begin{corollary}\label{corollary HMM restrict controls}
Let $\Delta$ be a compact subset of $\cD$. Under Assumptions~\ref{assumption uncertainty parameterisation} and \ref{assumptions f g}, there exists a bound $M > 0$ and a compact set $K \subset S^m \times \R^k$ such that, when taking the infimum in \eqref{eq:valuefuncHMM} for $(t,x,a) \in \Delta$, one may restrict to controls $u \in \cU_{M,K}$ without changing the value of $\kappa(t,x,a)$, where\footnote{We suppress the dependency of $\cU_{M,K}$ on the point $(t,x,a)$ in our notation.}
\begin{equation}\label{eq:controlMbound}
\cU_{M,K} := \big\{u \in \cU \, : \, \|\gamma^{t,a,u}\|_{\ptvarzt} \leq M, \textnormal{ and } (\pi^{t,x,a,u}_s,\gamma^{t,a,u}_s) \in K \textnormal{ for all } s \in [0,t]\big\}.
\end{equation}
\end{corollary}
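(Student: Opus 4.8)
The plan is to split the restriction into two independent parts: bounding the control "in $L^{p/2}$-variation" (equivalently, bounding $\int_0^t|u_s|\,\rd s$), and confining the state trajectory to a compact subset of $S^m\times\R^k$. For the first part, I would use the superlinear growth of $f$ in $u$ together with the lower bound \eqref{eq:kappaboundedbelow} already established in the proof of Lemma~\ref{lemma value func loc bdd}: along any minimizing sequence, the cost is bounded above (by Lemma~\ref{lemma value func loc bdd}, $\kappa$ is locally bounded above on $\cD$, hence bounded on $\Delta$), so $\tfrac12\int_0^t f(\pi_s,\gamma_s,u_s)\,\rd s$ cannot exceed some constant $C_\Delta$ plus the (bounded below) contribution of $g$; combined with \eqref{eq:Afepsilonbound} this yields a uniform bound on $\int_0^t|u_s|^{\frac{p-1}{2}}\,\rd s$ and hence, via \eqref{eq:bound gamma int u} and H\"older, on $\|\gamma^{t,a,u}\|_{\ptvarzt}$. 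Thus there is an $M>0$ such that any control violating $\|\gamma^{t,a,u}\|_{\ptvarzt}\le M$ has cost strictly larger than $\sup_\Delta\kappa$, and may be discarded.

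For the second part, once $\|\gamma^{t,a,u}\|_{\ptvarzt}\le M$ is imposed, $\gamma^{t,a,u}$ takes values in a fixed compact set of $\R^k$ (since $|\gamma^{t,a,u}_s - a|\le\|\gamma^{t,a,u}\|_{1,[0,t]}$ and $a$ ranges over a compact set, using that $\Delta$ is compact). Then by the a~priori bound on solutions of the RDE \eqref{eq:contrdynamicsHMM}—which follows from part~(i) of Theorem~\ref{theoremHMMRDE} together with the global estimate \eqref{eq:RDEcontXglobal} (comparing $\pi^{t,x,a,u}$ to the constant path, say, or simply iterating the local RDE bounds)—the trajectory $\pi^{t,x,a,u}$ stays in a bounded subset of $\R^m$ depending only on $\|\bY\|$, $M$, $T$ and the coefficients. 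Since we have already discarded all trajectories that leave $S^m$ (they carry infinite initial cost), the surviving trajectories lie in a bounded subset of $S^m$; but \emph{a priori} this need not be compactly contained in $S^m$, so one cannot immediately close up.

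This is where I expect the main obstacle: ruling out trajectories that, while remaining inside $S^m$, approach $\pa S^m$. The resolution is to invoke Assumption~\ref{assumptions f g}(iii), i.e.~$g\in C^\uparrow(S^m\times\R^k;\R)$, so that $g(\pi_0,\gamma_0)\to\infty$ as $d(\pi_0,\pa S^m)\to 0$. Combined with the fact that a trajectory starting very close to $\pa S^m$ cannot, over the bounded time horizon $[0,t]$ and with $\|\gamma\|_{\ptvarzt}\le M$, move far from $\pa S^m$ (again by the RDE estimates, the total variation of $\pi^{t,x,a,u}$ over $[0,t]$ is controlled uniformly), one sees that if the \emph{terminal} point $x$ lies in a compact subset of $S^m$—which it does, since $(t,x,a)\in\Delta$ and $\Delta\subset\cD$ is compact—then any control driving $\pi_0$ within distance $\delta$ of $\pa S^m$ would either require $\|\gamma\|_{\ptvarzt}$ to exceed $M$ (already excluded) or incur an initial cost $g(\pi_0,\gamma_0)$ exceeding $\sup_\Delta\kappa$, and so may be discarded. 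This pins $\pi^{t,x,a,u}$ to a compact $\Xi\subset S^m$; setting $K:=\Xi\times\{|\gamma|\le R\}$ for the appropriate $R$ completes the proof.

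A clean way to package the above is in fact to reuse the construction in Step~2 of the proof of Lemma~\ref{lemma value func loc bdd}, which already produced, for the compact set $\Delta$, a single compact $K_0\subset S^m\times\R^k$ and a uniformly bounded finite family of controls $\{\bar u\}$ with $(\pi^{t,x,a,\bar u}_s,\gamma^{t,a,\bar u}_s)\in K_0$ for all $s$ and all $(t,x,a)\in\Delta$; this gives the uniform upper bound $\sup_\Delta\kappa\le C_1<\infty$ used above to define $M$. Then enlarge $K_0$ to the compact set $K$ dictated by the $M$-dependent RDE and $C^\uparrow$ arguments, and the claim follows.
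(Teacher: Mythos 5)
Your proposal is correct and follows essentially the same route as the paper: the superlinearity of $f$ in $u$, the upper bound on $\kappa$ over $\Delta$ from Lemma~\ref{lemma value func loc bdd}, and the lower bound \eqref{eq:kappaboundedbelow} together yield the bound $M$ on $\|\gamma^{t,a,u}\|_{\ptvarzt}$, while the explosion of $g$ near $\pa S^m$ and for large arguments confines $(\pi^{t,x,a,u}_0,\gamma^{t,a,u}_0)$ to a compact set $\Xi$, after which the whole trajectory lies in a compact $K$. The only caveat is that your auxiliary claim that a trajectory starting near $\pa S^m$ ``cannot move far from $\pa S^m$'' is neither needed nor true as stated; the exclusion of such trajectories comes solely from the initial cost $g(\pi^{t,x,a,u}_0,\gamma^{t,a,u}_0)$ exceeding the bound implied by $\sup_\Delta\kappa$, exactly as in the paper.
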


\begin{proof}
We recall the inequality \eqref{eq:kappaboundedbelow}, which reads:
\begin{equation}\label{eq:kappaboundedbelow again}
\kappa(t,x,a) \geq \inf_{u \in \cU} \bigg\{\frac{1}{2} \int_0^t f(\pi^{t,x,a,u}_s,\gamma^{t,a,u}_s,u_s)\,\rd s - C + g(\pi^{t,x,a,u}_0,\gamma^{t,a,u}_0)\bigg\}
\end{equation}
for some constant $C$. Since $\kappa$ is bounded above on $\Delta$, and since $g$ is bounded below, we infer an upper bound on $\int_0^t f(\pi^{t,x,a,u}_s,\gamma^{t,a,u}_s,u_s)\,\rd s$. Recalling \eqref{eq:bound gamma int u} and \eqref{eq:Afepsilonbound}, we see that this then translates into an upper bound $M$, say, on $\|\gamma^{t,a,u}\|_{\ptvarzt}$.

Since $\kappa$ is bounded above on $\Delta$, and since $f$ is bounded below, we similarly infer from \eqref{eq:kappaboundedbelow again} an upper bound on $g(\pi^{t,x,a,u}_0,\gamma^{t,a,u}_0)$. Since $g \in  C^\uparrow(S^m \times \R^k;\R)$, this implies the existence of a compact set $\Xi \subset S^m \times \R^k$, such that, for terminal values $(t,x,a) \in \Delta$, we may restrict to controls $u$ such that $(\pi^{t,x,a,u}_0,\gamma^{t,a,u}_0) \in \Xi$.

Since both the initial and terminal values of the state variables $\pi^{t,x,a,u}, \gamma^{t,a,u}$ are then restricted to the compact sets $\Xi$ and $\Delta$ respectively, and since we know that we may restrict to controls $u$ such that $\|\gamma^{t,a,u}\|_{\ptvarzt} \leq M$, we conclude that the entire path $s \mapsto (\pi^{t,x,a,u}_s,\gamma^{t,a,u}_s)$ may be restricted to a compact set $K$.
\end{proof}

\begin{notation}
Recall Notation~\ref{notation lesssim}. Henceforth, whenever we have identified a compact subset $\Delta \subset \cD$, we shall allow the multiplicative constant indicated by the symbol $\lesssim$ to also depend on the constant $M$ and on the set $K$ in \eqref{eq:controlMbound}.
\end{notation}

\subsection{Regularity of the value function}

We have the following dynamic programming principle.

\begin{lemma}\label{lemma HMM DPP}
For any $(t,x,a) \in \cD$ and $r \in [0,t]$, the value function $\kappa$ (as defined in \eqref{eq:valuefuncHMM} above), satisfies
\begin{align}
&\kappa(t,x,a)\label{eq:HMM DPP}\\
&= \inf_{u \in \cU} \bigg\{\int_r^t f(\pi^{t,x,a,u}_s,\gamma^{t,a,u}_s,u_s)\,\rd s + \int_r^t \psi(\pi^{t,x,a,u}_s,\gamma^{t,a,u}_s)\,\rd \bY_s + \kappa(r,\pi^{t,x,a,u}_r,\gamma^{t,a,u}_r)\bigg\}.\nonumber
\end{align}
\end{lemma}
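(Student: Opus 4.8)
The plan is to prove the two inequalities separately, exploiting the \emph{backward} nature of the control problem: given a terminal condition $(\pi_t,\gamma_t)=(x,a)$ at time $t$ and a control $u$, the state trajectory is solved backwards to time $0$, and the intermediate point $(\pi^{t,x,a,u}_r,\gamma^{t,a,u}_r)$ at time $r\in[0,t]$ serves as a new terminal condition for a control problem on $[0,r]$. The additive structure of the cost functional in \eqref{eq:valuefuncHMM}---the running cost $\int_0^t f\,\rd s$, the rough integral $\int_0^t\psi\,\rd\bY$, and the initial cost $g$---splits cleanly at time $r$, which is exactly what makes the recursion work. First I would fix $(t,x,a)\in\cD$ and $r\in[0,t]$, and restrict attention (via Corollary~\ref{corollary HMM restrict controls}) to controls in a suitable class $\cU_{M,K}$, so that all state trajectories stay in a fixed compact set $K\subset S^m\times\R^k$ and all the estimates of Theorem~\ref{theoremHMMRDE} apply with uniform constants. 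A key preliminary observation is that both the rough integral and the RDE solution satisfy a \emph{flow/concatenation property}: for $s\le r\le t$ one has $\int_0^t\psi\,\rd\bY = \int_0^r\psi\,\rd\bY + \int_r^t\psi\,\rd\bY$ (additivity of the rough integral over adjacent intervals, which follows from Proposition~\ref{HMMproproughintegral}), and the solution of \eqref{eq:contrdynamicsHMM} on $[0,t]$ with terminal data $(x,a)$ at $t$, restricted to $[0,r]$, coincides with the solution on $[0,r]$ with terminal data $(\pi^{t,x,a,u}_r,\gamma^{t,a,u}_r)$ at $r$ (uniqueness from part~(i) of Theorem~\ref{theoremHMMRDE}), for the \emph{restricted} control $u|_{[0,r]}$.

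For the inequality ``$\le$'' (the value function is dominated by the right-hand side): take any control $u\in\cU$ on $[0,t]$. Its restriction $u|_{[0,r]}$ drives the state from $(\pi^{t,x,a,u}_r,\gamma^{t,a,u}_r)$ at time $r$ down to $(\pi^{t,x,a,u}_0,\gamma^{t,a,u}_0)$ at time $0$, so by definition of $\kappa(r,\cdot,\cdot)$ we get
$$\kappa(r,\pi^{t,x,a,u}_r,\gamma^{t,a,u}_r) \le \int_0^r f\,\rd s + \int_0^r\psi\,\rd\bY + g(\pi^{t,x,a,u}_0,\gamma^{t,a,u}_0).$$
Adding $\int_r^t f\,\rd s + \int_r^t\psi\,\rd\bY$ to both sides and using the concatenation identities recovers exactly the full cost of $u$ on $[0,t]$, hence the right-hand side of \eqref{eq:HMM DPP} is $\ge$ that cost; taking the infimum over $u$ gives ``$\le$''. (One must also handle the convention $\inf\emptyset=\infty$: if $u$ produces a trajectory leaving $S^m$, its cost is $+\infty$ and so is $\kappa(r,\pi^{t,x,a,u}_r,\gamma^{t,a,u}_r)$ since the restricted control already leaves $S^m$ on $[0,r]$, so the inequality is trivial there.)

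For the inequality ``$\ge$'': fix $\epsilon>0$. Choose a control $u^{(1)}$ on $[0,t]$, \emph{near-optimal for the outer problem}, i.e.\ achieving the infimum on the right-hand side of \eqref{eq:HMM DPP} up to $\epsilon$; this determines $(\pi^{t,x,a,u^{(1)}}_r,\gamma^{t,a,u^{(1)}}_r)=:(y,c)$. Then choose a control $u^{(2)}$ on $[0,r]$ that is near-optimal for $\kappa(r,y,c)$, again up to $\epsilon$. Splice them: define $u=u^{(1)}$ on $[r,t]$ and $u=u^{(2)}$ on $[0,r]$; this $u$ lies in $\cU$ (bounded measurable), and by the concatenation/uniqueness properties its state trajectory on $[r,t]$ agrees with that of $u^{(1)}$ and on $[0,r]$ with that of $u^{(2)}$ started from $(y,c)$. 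Summing the two near-optimality estimates and using additivity of the running cost, the rough integral, and the placement of $g$ at time $0$, the total cost of $u$ on $[0,t]$ is within $2\epsilon$ of the right-hand side of \eqref{eq:HMM DPP}; since $\kappa(t,x,a)$ is the infimum over all controls, $\kappa(t,x,a)\le \text{RHS}+2\epsilon$, and letting $\epsilon\to0$ gives ``$\ge$''.

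I expect the main obstacle to be the measurable-selection/concatenation bookkeeping rather than any hard analytic estimate: one must check that the spliced control is admissible and that the state trajectory genuinely concatenates, which rests on uniqueness of RDE solutions (Theorem~\ref{theoremHMMRDE}(i)) and additivity of rough integrals over adjacent intervals, and one must be careful about the exclusion of ``unphysical'' trajectories (those leaving $S^m$) so that the $\inf\emptyset=\infty$ convention is respected consistently on both $[0,r]$ and $[r,t]$. The restriction to $\cU_{M,K}$ from Corollary~\ref{corollary HMM restrict controls} is what guarantees the relevant infima are attained approximately by controls keeping the state in a compact set, so that the rough-integral bounds are uniform and the argument does not degenerate near $\pa Q_t$ or for large $|a|$.
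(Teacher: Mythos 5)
Your argument is the standard two-step DPP proof (restriction of controls for one inequality, splicing of $\epsilon$-optimal controls for the other, using uniqueness/concatenation of the backward RDE and additivity of the rough integral), which is exactly what the paper does: it simply refers to the proof of Theorem~2.1 in Chapter~4 of Yong--Zhou \cite{YongZhou1999} and notes that the rough integrals cause no additional difficulty. The only slip is that your two direction labels are interchanged --- the restriction argument shows the bracketed quantity at $u$ is $\le$ the full cost of $u$, hence after taking infima $\kappa(t,x,a) \ge$ the right-hand side of \eqref{eq:HMM DPP}, while the splicing argument gives $\kappa(t,x,a) \le \mathrm{RHS} + 2\epsilon$ --- but the content of each sub-argument is correct and together they yield the stated equality.
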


The result of Lemma~\ref{lemma HMM DPP} follows the same proof as that of Theorem~2.1 in \cite[Chapter~4]{YongZhou1999}. In particular, the rough integrals appearing in the controlled dynamics and value function do not cause any additional difficulty.

\begin{proposition}\label{prop kappa loc Lip x}
Under Assumptions~\ref{assumption uncertainty parameterisation} and \ref{assumptions f g}, the value function $\kappa = \kappa(t,x,a)$ is locally Lipschitz continuous in $(x,a)$, uniformly in $t \in [0,T]$.
\end{proposition}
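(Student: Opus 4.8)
The plan is to exploit the dynamic programming principle (Lemma~\ref{lemma HMM DPP}) to reduce the problem to controlling state trajectories over a \emph{short} time interval $[t-\delta,t]$ near the terminal time, together with the continuity estimates for RDEs from part~(iii) of Theorem~\ref{theoremHMMRDE}. The key point is that, by Corollary~\ref{corollary HMM restrict controls}, over a fixed compact set $\Delta \subset \cD$ the relevant controls may be restricted to the class $\cU_{M,K}$, so that the $\frac{p}{2}$-variation of $\gamma^{t,a,u}$ is bounded by $M$ and the state trajectories stay in a compact $K \subset S^m \times \R^k$. This is precisely the setting in which \eqref{eq:RDEcontXglobal} and \eqref{eq:RDEcontintpsiglobal} apply with uniform constants, and moreover it gives uniform positive distance from $\pa S^m$, so that the local Lipschitz constants of $f$ and $g$ on $K$ (Assumption~\ref{assumptions f g}(i)) become genuine constants.

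First I would fix a compact $\Delta \subset \cD$ and two points $(t,x,a), (t,\tx,\ta) \in \Delta$ with the same time $t$. Given $\epsilon > 0$, pick a near-optimal control $u \in \cU_{M,K}$ for $(t,x,a)$. Use the \emph{same} control $u$ for the terminal condition $(t,\tx,\ta)$: by Corollary~\ref{corollary reverse time} (the reverse-time version of the continuity estimates, written in terms of terminal values), the resulting trajectories satisfy
\begin{equation*}
\|\pi^{t,x,a,u} - \pi^{t,\tx,\ta,u}\|_{p,[0,t]} + \|\gamma^{t,a,u} - \gamma^{t,\ta,u}\|_{\frac{p}{2},[0,t]} \lesssim |x - \tx| + |a - \ta|,
\end{equation*}
since the driving rough path $\bY$ and the control-derived parameters differ only through their terminal values (the control $u$ being identical, $\gamma^{t,a,u}_s - \gamma^{t,\ta,u}_s = a - \ta$ is constant, so $\|\gamma^{t,a,u}-\gamma^{t,\ta,u}\|_{\frac p2,[0,t]}=0$ and only the $|a-\ta|$ term survives). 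Then estimate the difference of the two cost functionals term by term: the running-cost terms $\int_0^t f\,\rd s$ differ by at most the Lipschitz constant of $f$ on $K$ times $\sup_s(|\pi^{t,x,a,u}_s - \pi^{t,\tx,\ta,u}_s| + |\gamma^{t,a,u}_s - \gamma^{t,\ta,u}_s|) \lesssim |x-\tx|+|a-\ta|$; the rough-integral terms are controlled by \eqref{eq:RDEcontintpsiglobal} (again in its reverse-time form), giving $\lesssim |x-\tx|+|a-\ta|$; and the initial-cost terms $g(\pi_0,\gamma_0)$ differ by at most the Lipschitz constant of $g$ on $K$ times the same quantity. Hence $\kappa(t,\tx,\ta) \le \kappa(t,x,a) + C(|x-\tx|+|a-\ta|) + \epsilon$; letting $\epsilon \to 0$ and swapping the roles of the two points gives the uniform (in $t$) local Lipschitz bound. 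One should take a slightly larger compact set so that a whole neighbourhood of $\Delta$ lies in $\cD$ and the same control $u$ remains admissible (i.e.\ keeps the perturbed trajectory inside $S^m$); this is where local boundedness from Lemma~\ref{lemma value func loc bdd} and the openness of $Q_t$ are used.

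The main obstacle is the bookkeeping around admissibility: when I reuse the optimal control $u$ for the perturbed terminal condition $(\tx,\ta)$, I must ensure the perturbed filter trajectory $\pi^{t,\tx,\ta,u}$ does not leave $S^m$, since otherwise its cost is $+\infty$ and the comparison fails. This is handled by the compactness argument inside the proof of Lemma~\ref{lemma value func loc bdd}: near-optimal controls keep $\pi$ inside a \emph{fixed} compact $\Xi \subset S^m$ with positive distance from $\pa S^m$, and the RDE continuity estimate \eqref{eq:RDEcontXglobal} shows that a sufficiently small perturbation of the terminal value moves the trajectory by at most that distance; so for $(\tx,\ta)$ close enough to $(x,a)$ the perturbed trajectory still lies in $S^m$, and the bound is genuinely local. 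A minor secondary point is that the estimates in Theorem~\ref{theoremHMMRDE}(iii) require the $\frac{p}{2}$-variation bound $M$ on the parameters; this is exactly supplied by restricting to $\cU_{M,K}$ via Corollary~\ref{corollary HMM restrict controls}, which is legitimate precisely because we are working over the compact set $\Delta$.
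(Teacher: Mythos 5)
Your proposal is correct and follows essentially the same route as the paper: restrict to controls in $\cU_{M,K}$ via Corollary~\ref{corollary HMM restrict controls}, apply the reverse-time continuity estimates of Corollary~\ref{corollary reverse time} to compare the two state trajectories and rough integrals driven by the same control, and then bound the cost functionals term by term using the Lipschitz constants of $f$ and $g$ on the compact set $K$. The opening appeal to the dynamic programming principle is not actually needed (and is not used in your executed argument), while your extra care about admissibility of the reused control for the perturbed terminal condition is a sensible refinement of a point the paper leaves implicit.
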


\begin{proof}
Let $\Delta$ be a compact subset of $\cD$, and let $(t,x,a), (t,\tx,\ta) \in \Delta$. Let $u \in \cU$. By Corollary~\ref{corollary HMM restrict controls}, we may assume that $u \in \cU_{M,K}$, i.e.~$\|\gamma^{t,a,u}\|_{\ptvarzt} \leq M$ for some $M > 0$, and there exists a compact subset $K \subset S^m \times \R^k$, such that $(\pi^{t,x,a,u}_s,\gamma^{t,a,u}_s) \in K$ for all $(t,x,a) \in \Delta$ and $s \in [0,t]$. By Corollary~\ref{corollary reverse time}, we have
\begin{align}
\|\pi^{t,x,a,u} - \pi^{t,\tx,\ta,u}\|_{\pvarzt} &\lesssim |x - \tx| + |a - \ta|,\label{eq:picontboundxtx}\\
\bigg\|\int_0^\cdot \psi(\pi^{t,x,a,u}_s,\gamma^{t,a,u}_s)\,\rd \bY_s - \int_0^\cdot \psi(\pi^{t,\tx,\ta,u}_s,\gamma^{t,\ta,u}_s)\,\rd \bY_s\bigg\|_{\pvarzt} &\lesssim |x - \tx| + |a - \ta|.\label{eq:intpsicontboundxtx}
\end{align}

Since we have restricted to the compact set $K$, we may then take the functions $f$ and $g$ to be Lipschitz in $(x,a)$. Using \eqref{eq:picontboundxtx} and \eqref{eq:intpsicontboundxtx}, we then have
\begin{align*}
\big|\kappa&(t,x,a) - \kappa(t,\tx,\ta)\big|\\
&= \sup_{u \in \cU_{M,K}} \bigg|\int_0^t \big(f(\pi^{t,x,a,u}_s,\gamma^{t,a,u}_s,u_s) - f(\pi^{t,\tx,\ta,u}_s,\gamma^{t,\ta,u}_s,u_s)\big)\hspace{1pt}\rd s\\
&\hspace{50pt} + \int_0^t \psi(\pi^{t,x,a,u}_s,\gamma^{t,a,u}_s)\,\rd \bY_s - \int_0^t \psi(\pi^{t,\tx,\ta,u}_s,\gamma^{t,\ta,u}_s)\,\rd \bY_s\\
&\hspace{70pt} + g(\pi^{t,x,a,u}_0,\gamma^{t,a,u}_0) - g(\pi^{t,\tx,\ta,u}_0,\gamma^{t,\ta,u}_0)\bigg|\\
&\lesssim \sup_{u \in \cU_{M,K}} \bigg(\int_0^t |\pi^{t,x,a,u}_s - \pi^{t,\tx,\ta,u}_s| + |\gamma^{t,a,u}_s - \gamma^{t,\ta,u}_s|\,\rd s\\
&\hspace{50pt} + |x - \tx| + |a - \ta| + |\pi^{t,x,a,u}_0 - \pi^{t,\tx,\ta,u}_0| + |\gamma^{t,a,u}_0 - \gamma^{t,\ta,u}_0|\bigg)\\
&\lesssim |x - \tx| + |a - \ta|,
\end{align*}
and the result follows.
\end{proof}

\begin{proposition}\label{prop kappa cts t}
Under Assumptions~\ref{assumption uncertainty parameterisation} and \ref{assumptions f g}, the value function $\kappa = \kappa(t,x,a)$ is continuous in $t$, with a local modulus of continuity which is uniform in $(x,a)$.
\end{proposition}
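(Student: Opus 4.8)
\emph{Proof strategy.} Fix a compact set $\Delta\subset\cD$; the claim amounts to producing a modulus of continuity $\omega_\Delta$, depending only on $\Delta$ and on the data $b,\phi,\psi,f,g,p,L,T$, such that $|\kappa(t,x,a)-\kappa(t',x,a)|\le\omega_\Delta(|t-t'|)$ for all $(t,x,a),(t',x,a)\in\Delta$ --- here ``local'' is understood as uniform on compact subsets of $\cD$. Since $\cD$ is open (which I would verify from the continuous dependence of the RDE \eqref{eq:contrdynamicsHMM} on its terminal data, cf.\ Corollary~\ref{corollary reverse time}), I may fix $\delta_0>0$ so that the closed $\delta_0$-neighbourhood $\Delta'$ of $\Delta$ is again a compact subset of $\cD$, and by Proposition~\ref{prop kappa loc Lip x} the function $\kappa(t',\cdot,\cdot)$ is $L_{\Delta'}$-Lipschitz on the $t'$-section of $\Delta'$, with $L_{\Delta'}$ uniform in $t'$. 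Assume $t'<t$ and write $\delta:=t-t'$. The backbone of the argument is the dynamic programming principle (Lemma~\ref{lemma HMM DPP}) with $r=t'$, which represents $\kappa(t,x,a)$ as the infimum over $u\in\cU$ of the short-horizon cost $\int_{t'}^t f(\pi^{t,x,a,u}_s,\gamma^{t,a,u}_s,u_s)\,\rd s+\int_{t'}^t\psi(\pi^{t,x,a,u}_s,\gamma^{t,a,u}_s)\,\rd\bY_s$ plus $\kappa(t',\pi^{t,x,a,u}_{t'},\gamma^{t,a,u}_{t'})$; I will compare the latter term with $\kappa(t',x,a)$ through the Lipschitz bound, once it is known that $(\pi^{t,x,a,u}_{t'},\gamma^{t,a,u}_{t'})$ lies close to $(x,a)$.

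The crucial quantitative step is to show that for controls $u$ that are near-optimal for $\kappa(t,x,a)$ with $(t,x,a)\in\Delta$, one has $\int_{t'}^t|u_s|\,\rd s\to0$ as $\delta\to0$, uniformly over $\Delta$. I would argue as in the proof of Corollary~\ref{corollary HMM restrict controls}: by Lemma~\ref{lemma value func loc bdd}, $\kappa$ is bounded above on $\Delta$, so a near-optimal control has total cost at most some $C_\Delta$; combined with the bound \eqref{eq:HMMpsiintlocbound} on the rough-integral term and the fact that $f,g$ are bounded below (Assumption~\ref{assumptions f g}), this gives a uniform bound $\int_0^t f(\pi^{t,x,a,u}_s,\gamma^{t,a,u}_s,u_s)\,\rd s\le C'_\Delta$. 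Superlinearity of $f$ in $u$ (part~(ii) of Assumption~\ref{assumptions f g}) supplies, for each $\Lambda>0$, a radius $R_\Lambda$ with $f(x,a,u)\ge\Lambda|u|$ whenever $|u|\ge R_\Lambda$; writing $f\ge-c_f$, this forces $\Lambda\int_{\{|u_s|\ge R_\Lambda\}}|u_s|\,\rd s\le C'_\Delta+c_fT$, hence $\int_{t'}^t|u_s|\,\rd s\le R_\Lambda\delta+(C'_\Delta+c_fT)/\Lambda$, which tends to $0$ on first sending $\delta\to0$ and then $\Lambda\to\infty$. In particular $\|\gamma^{t,a,u}\|_{\frac{p}{2},[t',t]}\le\int_{t'}^t|u_s|\,\rd s=:\rho(\delta)\to0$ (cf.\ \eqref{eq:bound gamma int u}), and then, applying the estimates of Theorem~\ref{theoremHMMRDE} on the interval $[t',t]$ and using that the map $(s,r)\mapsto\ver{\bY}_{p,[s,r]}$ is continuous and vanishes on the diagonal, one obtains that $\|\pi^{t,x,a,u}\|_{p,[t',t]}$ and $\big|\int_{t'}^t\psi(\pi^{t,x,a,u}_s,\gamma^{t,a,u}_s)\,\rd\bY_s\big|$ also tend to $0$ as $\delta\to0$, uniformly over $\Delta$. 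For $\delta$ small enough (depending only on $\Delta$) the point $(\pi^{t,x,a,u}_{t'},\gamma^{t,a,u}_{t'})$ then lies within $\delta_0$ of $(x,a)\in\Delta$, hence in $\Delta'$, so that $|\kappa(t',\pi^{t,x,a,u}_{t'},\gamma^{t,a,u}_{t'})-\kappa(t',x,a)|\lesssim\rho(\delta)$.

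With these ingredients the two one-sided bounds follow. For the lower bound, take $u$ to be $\eta$-optimal for $\kappa(t,x,a)$; the dynamic programming principle gives $\kappa(t,x,a)+\eta\ge\int_{t'}^t f\,\rd s+\int_{t'}^t\psi\,\rd\bY+\kappa(t',\pi^{t,x,a,u}_{t'},\gamma^{t,a,u}_{t'})$, and bounding $\int_{t'}^t f\,\rd s\ge-c_f\delta$, the rough integral by the vanishing quantity above, and the final term from below by $\kappa(t',x,a)-L_{\Delta'}\rho(\delta)$, then letting $\eta\to0$, yields $\kappa(t,x,a)\ge\kappa(t',x,a)-\omega_\Delta(\delta)$ for a suitable $\omega_\Delta$. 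For the upper bound I would test the infimum defining $\kappa(t,x,a)$ against the control which is identically zero on $[t',t]$ --- so that $\gamma$ is unchanged there and $\pi^{t,x,a,0}$ moves by at most $\|\pi^{t,x,a,0}\|_{p,[t',t]}\to0$ --- and is $\eta$-optimal on $[0,t']$ for the subproblem $\kappa(t',\pi^{t,x,a,0}_{t'},a)$; for $\delta$ small this trajectory stays in $S^m$ and is thus admissible, and its cost is at most $\kappa(t',x,a)+L_{\Delta'}\|\pi^{t,x,a,0}\|_{p,[t',t]}+C_{\Delta'}\delta+\big|\int_{t'}^t\psi\,\rd\bY\big|+\eta$, which after letting $\eta\to0$ is $\kappa(t',x,a)+\omega_\Delta(\delta)$. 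Combining the two bounds completes the proof.

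The step I expect to be the main obstacle is \emph{not} the rough/pathwise or backward nature of the problem: over a short interval the rough integral $\int_{t'}^t\psi\,\rd\bY$ behaves exactly like a Lebesgue integral once the sub-interval estimates of Theorem~\ref{theoremHMMRDE} and the diagonal continuity of the control functions are available. The genuine difficulty is the time-dependent and rough domain $Q_t$: one has to ensure that the intermediate state $(\pi^{t,x,a,u}_{t'},\gamma^{t,a,u}_{t'})$ produced by the dynamic programming split cannot escape a fixed compact subset of $\cD$, on which Proposition~\ref{prop kappa loc Lip x} provides a Lipschitz constant. This is exactly where the superlinear growth of $f$ in $u$ is indispensable --- without it, a near-optimal control could move $\gamma$, and hence $\pi$, by an $O(1)$ amount over an arbitrarily short time interval, and the comparison with $\kappa(t',x,a)$ would collapse. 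Establishing ``$\int_{t'}^t|u_s|\,\rd s\to0$ uniformly'' for near-optimal controls and combining it with the openness of $\cD$ is therefore the heart of the matter.
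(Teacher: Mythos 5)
Your proposal is correct and shares the overall architecture of the paper's proof: the dynamic programming principle at the intermediate time $r=t'$, the spatial Lipschitz continuity from Proposition~\ref{prop kappa loc Lip x} on an enlarged compact set, the zero control as the comparison trajectory for the upper bound, and the superlinearity of $f$ in $u$ as the mechanism preventing near-optimal controls from displacing $(\pi,\gamma)$ by an $O(1)$ amount over a short interval. You have also correctly identified that last point as the heart of the matter. Where you diverge is in \emph{how} you establish the smallness of $\int_{t'}^t|u_s|\,\rd s$. You use the global bound $\int_0^t f\,\rd s\le C'_\Delta$ for near-optimal controls together with a truncation of the superlinearity ($f\ge\Lambda|u|$ for $|u|\ge R_\Lambda$), giving the modulus $\rho(\delta)=\inf_{\Lambda}\big(R_\Lambda\delta+C/\Lambda\big)$, which does tend to zero but is not explicit. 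The paper instead follows Bardi and Da Lio: using the DPP it restricts to controls whose \emph{short-horizon} running cost $\int_{t'}^t f\,\rd s$ is dominated by the corresponding cost of the zero control plus the difference of the two $\kappa$-values at time $t'$, and then absorbs via $|u|\le C_\epsilon+\epsilon f$. This yields the explicit estimate $|\kappa(t,x,a)-\kappa(r,x,a)|\lesssim|t-r|+\ver{\bY}_{p,[r,t]}$ (equation \eqref{eq:kappa cts in t}). The distinction matters downstream: in Section~\ref{sec HJ equations} the paper reuses \eqref{eq:kappa cts in t} with $\bY$ replaced by the lift of a smooth path $\eta$ to conclude that $\kappa^\eta$ is \emph{Lipschitz} in $t$, which is needed for the comparison argument in Theorem~\ref{theorem HMM kappaeta unique soln}. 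Your modulus would only give, e.g., $\sqrt{\delta}$-continuity when $f$ is quadratic in $u$, so while it proves the proposition as stated, it does not recover the sharper quantitative information that the paper's proof extracts and later relies on.
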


\begin{proof}
Let $\Delta$ be a compact and convex subset of $\cD$. Let $(r,x,a), (t,x,a) \in \Delta$ with $r \leq t$. Note that then $(s,x,a) \in \Delta$ for all $s \in [r,t]$ by convexity. By Corollary~\ref{corollary HMM restrict controls}, we may restrict to controls $u \in \cU_{M,K}$, so that $\|\gamma^{t,a,u}\|_{\ptvarzt} \leq M$ for some $M > 0$, and there exists a compact subset $K \subset S^m \times \R^k$, such that $(\pi^{t,x,a,u}_s,\gamma^{t,a,u}_s) \in K$ for all $(t,x,a) \in \Delta$ and $s \in [0,t]$. Similarly to the proof of Theorem~2.2 in Bardi and Da Lio \cite{BardiDaLio1997}, by Lemma~\ref{lemma HMM DPP} we can further restrict to controls $u \in \cU_{M,K}$ such that
\begin{align}
&\int_r^t f(\pi^{t,x,a,u}_s,\gamma^{t,a,u}_s,u_s)\,\rd s\nonumber\\
&\leq \bigg|\int_r^t f(\pi^{t,x,a,0}_s,\gamma^{t,a,0}_s,0)\,\rd s + \int_r^t \psi(\pi^{t,x,a,0}_s,\gamma^{t,a,0}_s)\,\rd \bY_s - \int_r^t \psi(\pi^{t,x,a,u}_s,\gamma^{t,a,u}_s)\,\rd\bY_s\bigg|\nonumber\\
&\quad + \big|\kappa(r,\pi^{t,x,a,0}_r,\gamma^{t,a,0}_r) - \kappa(r,\pi^{t,x,a,u}_r,\gamma^{t,a,u}_r)\big|,\label{eq:bound f with DPP}
\end{align}
where $0 \in \cU$ is the zero control. We aim to bound each of the terms on the right-hand side. By part (ii) of Theorem~\ref{theoremHMMRDE}, we have
\begin{equation}\label{eq:bound int psi rt}
\bigg|\int_r^t \psi(\pi^{t,x,a,u}_s,\gamma^{t,a,u}_s)\,\rd\bY_s\bigg| \leq \bigg\|\int_0^\cdot \psi(\pi^{t,x,a,u}_s,\gamma^{t,a,u}_s)\,\rd\bY_s\bigg\|_{p,[r,t]} \lesssim \ver{\bY}_{p,[r,t]},
\end{equation}
and similarly with $u$ replaced by $0$.

Since the path $s \mapsto (\pi^{t,x,a,0}_s,\gamma^{t,a,0}_s)$ then lives in a compact set, and since $f$ is locally bounded, it follows that
\begin{equation}\label{eq:bound f 0}
\bigg|\int_r^t f(\pi^{t,x,a,0}_s,\gamma^{t,a,0}_s,0)\,\rd s\bigg| \lesssim |t - r|.
\end{equation}
By Proposition~\ref{prop kappa loc Lip x}, since we have restricted to a compact set, we may take $\kappa$ to be Lipschitz in $(x,a)$. We then have that
\begin{equation*}
\big|\kappa(r,\pi^{t,x,a,0}_r,\gamma^{t,a,0}_r) - \kappa(r,\pi^{t,x,a,u}_r,\gamma^{t,a,u}_r)\big| \lesssim |\pi^{t,x,a,0}_r - \pi^{t,x,a,u}_r| + |\gamma^{t,a,0}_r - \gamma^{t,a,u}_r|.
\end{equation*}
By Corollary~\ref{corollary reverse time}, we have
$$|\pi^{t,x,a,0}_r - \pi^{t,x,a,u}_r| \leq \|\pi^{t,x,a,0} - \pi^{t,x,a,u}\|_{\pvarrt} \lesssim \|\gamma^{t,a,0} - \gamma^{t,a,u}\|_{\ptvarrt},$$
and hence
\begin{equation}\label{eq:bound kappa lip r}
\big|\kappa(r,\pi^{t,x,a,0}_r,\gamma^{t,a,0}_r) - \kappa(r,\pi^{t,x,a,u}_r,\gamma^{t,a,u}_r)\big| \lesssim \|\gamma^{t,a,0} - \gamma^{t,a,u}\|_{\onevarrt} = \int_r^t |u_s|\,\rd s.
\end{equation}
By part (ii) of Assumption~\ref{assumptions f g}, for any $\epsilon > 0$, there exists a constant $C_\epsilon$ such that
\begin{equation}\label{eq:Afepsilonbound 2}
|u| \leq C_\epsilon + \epsilon f(x,a,u)
\end{equation}
for all $(x,a,u) \in S^m \times \R^k \times \R^k$. Combining \eqref{eq:bound int psi rt}--\eqref{eq:Afepsilonbound 2} with \eqref{eq:bound f with DPP}, we obtain
\begin{equation*}
\int_r^t f(\pi^{t,x,a,u}_s,\gamma^{t,a,u}_s,u_s)\,\rd s \lesssim (1 + C_\epsilon)|t - r| + \ver{\bY}_{p,[r,t]} + \epsilon\int_r^t f(\pi^{t,x,a,u}_s,\gamma^{t,a,u}_s,u_s)\,\rd s,
\end{equation*}
and choosing $\epsilon$ sufficiently small, we deduce that
\begin{equation}\label{eq:intflessincrement}
\int_r^t f(\pi^{t,x,a,u}_s,\gamma^{t,a,u}_s,u_s)\,\rd s \lesssim |t - r| + \ver{\bY}_{p,[r,t]}.
\end{equation}
It also follows from the above that
\begin{align}
|\pi^{t,x,a,u}_r - x| + |\gamma^{t,a,u}_r - a| &\leq \bigg|\int_r^t b(\pi^{t,x,a,u}_s,\gamma^{t,a,u}_s)\,\rd s + \int_r^t \phi(\pi^{t,x,a,u}_s,\gamma^{t,a,u}_s)\,\rd \bY_s\bigg|\nonumber\\
&\quad + \|\gamma^{t,a,u} - \gamma^{t,a,0}\|_{\onevarrt}\nonumber\\
&\lesssim |t - r| + \ver{\bY}_{\pvarrt},\label{eq:bound pi x gamma a}
\end{align}
where we used the fact that $b$ is uniformly bounded, and that \eqref{eq:bound int psi rt} also holds with $\psi$ replaced by $\phi$. By Lemma~\ref{lemma HMM DPP}, we can take a sequence of controls $(u^n)_{n \geq 1} \subset \cU_{M,K}$ such that
\begin{align*}
\kappa(t,x,a) = \lim_{n \to \infty} \bigg(&\int_r^t f(\pi^{t,x,a,u^n}_s,\gamma^{t,a,u^n}_s,u^n_s)\,\rd s\\
&+ \int_r^t \psi(\pi^{t,x,a,u^n}_s,\gamma^{t,a,u^n}_s)\,\rd \bY_s + \kappa(r,\pi^{t,x,a,u^n}_r,\gamma^{t,a,u^n}_r)\bigg).
\end{align*}
Using \eqref{eq:bound int psi rt} and \eqref{eq:intflessincrement}, and the fact that $\kappa$ is locally Lipschitz in $(x,a)$, we have
\begin{align*}
&\bigg|\int_r^t f(\pi^{t,x,a,u^n}_s,\gamma^{t,a,u^n}_s,u^n_s)\,\rd s + \int_r^t \psi(\pi^{t,x,a,u^n}_s,\gamma^{t,a,u^n}_s)\,\rd \bY_s\\
&\quad + \kappa(r,\pi^{t,x,a,u^n}_r,\gamma^{t,a,u^n}_r) - \kappa(r,x,a)\bigg|\\
&\lesssim |t - r| + \ver{\bY}_{p,[r,t]} + |\pi^{t,x,a,u^n}_r - x| + |\gamma^{t,a,u^n}_r - a|.
\end{align*}
Using \eqref{eq:bound pi x gamma a}, and taking the limit as $n \to \infty$, we obtain
\begin{equation}\label{eq:kappa cts in t}
\big|\kappa(t,x,a) - \kappa(r,x,a)\big| \lesssim |t - r| + \ver{\bY}_{p,[r,t]},
\end{equation}
which implies that $\kappa$ is continuous in $t$, uniformly in $x$ and $a$.
\end{proof}

\begin{notation}\label{notation C up cD R}
We write
$$d(x,\pa Q_t) := \inf_{y \in \pa Q_t} |x - y|$$
for the distance of a point $x \in Q_t$ to the boundary $\pa Q_t := \overline{Q}_t \setminus Q_t$ of $Q_t$.

We denote by $C^\uparrow(\cD;\R)$ the space of continuous functions $v \colon \cD \to \R$ which explode near the boundary of $Q_t$ and for large values of $a \in \R^k$, that is, such that
\begin{align}
\inf_{t \in [0,T],\, a \in \R^k} \inf \big\{v(t,x,a)\, \big|\ x \in Q_t,\, d(x,\pa Q_t) < \delta\big\} \, \longrightarrow \, \infty \qquad &\text{as} \quad \delta \, \longrightarrow \, 0^+,\label{eq:notation v infty x}\\
\inf_{t \in [0,T],\, x \in Q_t} v(t,x,a) \, \longrightarrow \, \infty \qquad &\text{as} \quad |a| \, \longrightarrow \, \infty.\label{eq:notation v infty a}
\end{align}
\end{notation}

\begin{proposition}\label{prop kappa blows up}
Under Assumptions~\ref{assumption uncertainty parameterisation} and \ref{assumptions f g}, we have that $\kappa \in C^\uparrow(\cD;\R)$.
\end{proposition}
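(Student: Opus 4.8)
The plan is as follows. Continuity of $\kappa$ on $\cD$ is immediate once we combine Proposition~\ref{prop kappa loc Lip x} (local Lipschitz continuity in $(x,a)$, uniform in $t$) with Proposition~\ref{prop kappa cts t} (continuity in $t$ with a local modulus of continuity uniform in $(x,a)$). It therefore remains to prove the two explosion properties \eqref{eq:notation v infty x} and \eqref{eq:notation v infty a}, and both will be read off from the lower bound \eqref{eq:kappaboundedbelow}, namely
\[
\kappa(t,x,a) \ \geq\ \inf_{u \in \cU}\Big\{\tfrac12\int_0^t f(\pi^{t,x,a,u}_s,\gamma^{t,a,u}_s,u_s)\,\rd s\ -\ C\ +\ g(\pi^{t,x,a,u}_0,\gamma^{t,a,u}_0)\Big\},
\]
used together with the consequence of the superlinearity in Assumption~\ref{assumptions f g}(ii) that, for every $R>0$, there is a constant $C_R$ with $f(x',a',u') \geq R|u'| - C_R$ for all arguments (of the same type as \eqref{eq:Afepsilonbound}). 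First I would record the a priori bounds available whenever $\kappa(t,x,a) \leq N$: choosing a control $u$ with cost at most $N+1$, necessarily $\pi^{t,x,a,u}_0 \in S^m$ and the trajectory $\pi^{t,x,a,u}$ stays in $S^m$ (otherwise the cost is infinite), and since $f,g$ are bounded below the displayed inequality forces $\int_0^t|u_s|\,\rd s \leq M_0(N)$ and $g(\pi^{t,x,a,u}_0,\gamma^{t,a,u}_0) \leq M_2(N)$; because $g \in C^\uparrow(S^m\times\R^k;\R)$ this yields in addition $d(\pi^{t,x,a,u}_0,\pa S^m) \geq \eta_0(N) > 0$ and $|\gamma^{t,a,u}_0| \leq \rho_0(N)$, with all constants depending only on $N$ and the fixed data, and in particular $\|\gamma^{t,a,u}\|_{\ptvarzt} \leq \|\gamma^{t,a,u}\|_{\onevarzt} \leq M_0(N)$.

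For \eqref{eq:notation v infty a} I would use the same lower bound and split according to the size of $\gamma^{t,a,u}_0$. If $|\gamma^{t,a,u}_0| \geq \rho$, then $g(\pi^{t,x,a,u}_0,\gamma^{t,a,u}_0) \geq \inf\{g(x',a') : x'\in S^m,\ |a'|\geq \rho\}$, which tends to $+\infty$ as $\rho \to \infty$ by the second defining property of $C^\uparrow(S^m\times\R^k;\R)$; if $|\gamma^{t,a,u}_0| < \rho$, then $\int_0^t|u_s|\,\rd s \geq |\gamma^{t,a,u}_t - \gamma^{t,a,u}_0| = |a - \gamma^{t,a,u}_0| \geq |a| - \rho$, which via $f(\cdot,\cdot,u') \geq |u'| - C_1$ gives a lower bound on $\int_0^t f$, hence on $\kappa(t,x,a)$, that is large when $|a|$ is large. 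Choosing first $\rho$ and then $|a|$ large enough makes both branches exceed any prescribed level, uniformly over $t \in [0,T]$ and $x \in Q_t$; this is exactly \eqref{eq:notation v infty a}.

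For \eqref{eq:notation v infty x} I would prove the uniform contrapositive: $\kappa(t,x,a) \leq N$ implies $d(x,\pa Q_t) \geq \delta(N) > 0$ with $\delta(N)$ depending only on $N$. Fix such $(t,x,a)$ and a control $u$ as above, so that $\|\gamma^{t,a,u}\|_{\ptvarzt} \leq M_0(N)$ and $d(\pi^{t,x,a,u}_0,\pa S^m) \geq \eta_0(N)$. The key structural input is the invariant-region property of the filtering RDE \eqref{eq:HMMfilterRDE}: if $\pi^{t,x,a,u}_0 \in S^m$ then $\pi^{t,x,a,u}_s \in S^m$ for every $s \in [0,t]$ (this is exactly what underlies the equality in Notation~\ref{notation Qt cD}). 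Granting this, for any $z$ with $|z-x| < \eta_0(N)/(2C')$, where $C' = C'(N)$ is the constant of Corollary~\ref{corollary reverse time} corresponding to the bound $M = M_0(N)$, the reverse-time estimate of that corollary gives $|\pi^{t,z,a,u}_0 - \pi^{t,x,a,u}_0| \leq C'|z-x| < \eta_0(N)/2$, hence $\pi^{t,z,a,u}_0 \in S^m$; by the invariant-region property the trajectory $\pi^{t,z,a,u}$ then stays in $S^m$, so in particular $z = \pi^{t,z,a,u}_t \in S^m$ and $z \in Q_t$. Thus $B(x,\eta_0(N)/(2C'(N))) \subseteq Q_t$, i.e.\ $d(x,\pa Q_t) \geq \delta(N) := \eta_0(N)/(2C'(N)) > 0$, uniformly in $t$ and $a$, which is the property \eqref{eq:notation v infty x}.

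The main obstacle is the invariant-region property invoked in the last step. In the classical setting where $Y$ is a semimartingale this is simply the strict positivity of the Wonham filter started from a strictly positive prior, but here it must be established for an arbitrary geometric rough path $\bY \in \grp$. The natural route is to pass to the unnormalised (Zakai-type) filter $\sigma$, for which $\pi = \sigma/\langle\Ind,\sigma\rangle$ and $\sigma_s = \Phi_s\sigma_0$ with $\Phi$ the fundamental matrix solution of the associated linear RDE; writing the rate matrix as $A(\gamma) = D(\gamma) + N(\gamma)$ with $D$ diagonal and $N$ having nonnegative off-diagonal entries, a Duhamel/comparison argument (first for smooth driving signals, then for $\bY \in \grp$ by density and continuity of the RDE solution map, using that $\cA\times\cH$ is bounded so the coefficients are bounded uniformly in $\gamma$) shows that $\Phi_s$ has nonnegative entries with diagonal entries bounded below, whence $\sigma_s$ has strictly positive entries and $\pi_s \in S^m$. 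This requires combining the linear reformulation of the filter, the covariance of RDEs under the normalisation map, and an approximation argument in $\grp$, and it is this — rather than the optimal-control bookkeeping — that pins down the geometry of the time-dependent rough boundary $\pa Q_t$ and makes \eqref{eq:notation v infty x} work; the remaining steps are routine given Theorem~\ref{theoremHMMRDE}, Corollary~\ref{corollary reverse time}, and the earlier lemmas. (An alternative to the ball construction is a contradiction-and-compactness argument, in which the de la Vallée-Poussin criterion afforded by the superlinearity of $f$ yields a weakly $L^1$-convergent subsequence of near-optimal controls, the corresponding $\gamma^n$ converge in $\ptvarzt$ by interpolation against their uniform $1$-variation bound, \eqref{eq:RDEcontXglobal} transfers this to the trajectories, and the limiting trajectory exhibits the limit of $(x_n)$ as a point of $Q_{t^*}$, contradicting membership in $\pa Q_{t^*}$; this variant still needs the invariant-region property to identify the limit.)
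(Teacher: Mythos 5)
Your proof is correct and follows essentially the same route as the paper: joint continuity from Propositions~\ref{prop kappa loc Lip x} and \ref{prop kappa cts t}, the explosion in $a$ from the superlinearity of $f$ combined with $g \in C^\uparrow(S^m \times \R^k;\R)$, and the explosion near $\pa Q_t$ from the lower bound \eqref{eq:kappaboundedbelow} plus the Lipschitz dependence of $\pi^{t,x,a,u}_0$ on the terminal value via Corollary~\ref{corollary reverse time}. The only differences are presentational: the paper proves the second explosion by contradiction along sequences (choosing auxiliary terminal points $z^n$ near $\pa Q_{t^n}$) rather than via your uniform ball/contrapositive, and it leaves the forward-invariance of $S^m$ under the filtering RDE implicit in the second equality of Notation~\ref{notation Qt cD}, whereas you correctly identify it as the structural input and sketch how to establish it.
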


\begin{proof}
We first note that combining Propositions~\ref{prop kappa loc Lip x} and \ref{prop kappa cts t} yields joint continuity of $\kappa$ in all its variables. It remains to establish the conditions \eqref{eq:notation v infty x} and \eqref{eq:notation v infty a}.

\textbf{Step 1.}
We recall the inequality \eqref{eq:kappaboundedbelow}, which reads:
\begin{equation}\label{eq:kappaboundedbelow 2}
\kappa(t,x,a) \geq \inf_{u \in \cU} \bigg\{\frac{1}{2} \int_0^t f(\pi^{t,x,a,u}_s,\gamma^{t,a,u}_s,u_s)\,\rd s - C_1 + g(\pi^{t,x,a,u}_0,\gamma^{t,a,u}_0)\bigg\}
\end{equation}
for some constant $C_1$. By part (ii) of Assumption~\ref{assumptions f g}, there exists another constant $C$ such that
\begin{equation}\label{eq:u leq f}
|u| \leq C + f(x,a,u)
\end{equation}
for all $(x,a,u) \in S^m \times \R^k \times \R^k$. Recalling that $\rd \gamma^{t,a,u}_s = u_s\,\rd s$ and $\gamma^{t,a,u}_t = a$, we obtain
$$|a| \leq |\gamma^{t,a,u}_0| + \int_0^t |u_s|\,\rd s \lesssim |\gamma^{t,a,u}_0| + 1 + \int_0^t f(\pi^{t,x,a,u}_s,\gamma^{t,a,u}_s,u_s)\,\rd s.$$
Thus, as $|a| \to \infty$, it must be the case that either $\int_0^t f(\pi^{t,x,a,u}_s,\gamma^{t,a,u}_s,u_s)\,\rd s \to \infty$, or $|\gamma^{t,a,u}_0| \to \infty$, and in the latter case it then follows from part (iii) of Assumption~\ref{assumptions f g} that $g(\pi^{t,x,a,u}_0,\gamma^{t,a,u}_0) \to \infty$. Since $f$ and $g$ are both bounded below, it follows from \eqref{eq:kappaboundedbelow 2} that $\inf_{(t,x)} \kappa(t,x,a) \to \infty$ as $|a| \to \infty$, i.e.~that \eqref{eq:notation v infty a} holds for $v = \kappa$.

\textbf{Step 2.}
Let us now assume for a contradiction that \eqref{eq:notation v infty x} does not hold for $v = \kappa$. We then infer the existence of a sequence $((t^n,x^n,a^n))_{n \geq 1} \subset \cD$ and a constant $C_2$, such that $d(x^n,\pa Q_{t^n}) \to 0$ as $n \to \infty$, and $\kappa(t^n,x^n,a^n) \leq C_2$ for all $n \geq 1$.

By \eqref{eq:kappaboundedbelow 2}, for each $n \geq 1$ there exists a control $u^n \in \cU$ such that
\begin{align}
&\frac{1}{2}\int_0^{t^n} f(\pi^{t^n,x^n,a^n,u^n}_s,\gamma^{t^n,a^n,u^n}_s,u^n_s)\,\rd s - C_1 + g(\pi^{t^n,x^n,a^n,u^n}_0,\gamma^{t^n,a^n,u^n}_0)\nonumber\\
&< \kappa(t^n,x^n,a^n) + 1 \leq C_2 + 1.\label{eq:seq f g bounded}
\end{align}
We have two possibilities. Namely, either there exists a subsequence $(n_j)_{j \geq 1}$ such that
\begin{equation}\label{eq:int u n j to infty}
\int_0^{t^{n_j}} |u^{n_j}_s|\,\rd s\, \longrightarrow \, \infty \qquad \text{as} \quad j \, \longrightarrow \, \infty,
\end{equation}
or there does not. If there does exist such a subsequence, then it follows from \eqref{eq:u leq f} that
$$\int_0^{t^{n_j}} f(\pi^{t^{n_j},x^{n_j},a^{n_j},u^{n_j}}_s,\gamma^{t^{n_j},a^{n_j},u^{n_j}}_s,u^{n_j}_s)\,\rd s \, \longrightarrow \, \infty \qquad \text{as} \quad j \, \longrightarrow \, \infty,$$
contradicting \eqref{eq:seq f g bounded} (since $g$ is bounded below).

\textbf{Step 3.}
If there does not exist a subsequence such that \eqref{eq:int u n j to infty} holds, then it follows immediately that there exists a constant $M > 0$ such that
$$\|\gamma^{t^n,a^n,u^n}\|_{\frac{p}{2},[0,t^n]} \leq \|\gamma^{t^n,a^n,u^n}\|_{1,[0,t^n]} = \int_0^{t^n} |u^n_s|\,\rd s \leq M \qquad \text{for every} \quad n \geq 1.$$
We can then apply Corollary~\ref{corollary reverse time} to deduce the existence of a single constant $C$ such that
$$\|\pi^{t^n,x^n,a^n,u^n} - \pi^{t^n,z^n,a^n,u^n}\|_{p,[0,t^n]} \leq C|x^n - z^n|,$$
so that
$$|\pi^{t^n,x^n,a^n,u^n}_0 - \pi^{t^n,z^n,a^n,u^n}_0| \leq (C + 1)|x^n - z^n|$$
for any points $z^n \in Q_{t^n}$.

Since terminal values of $\pi$ on (or outside) the boundary $\pa Q_{t^n}$ result in initial values outside the domain $S^m$ (by the definition of $Q_{t^n}$), we may choose a terminal value $z^n$ close to $x^n$, but also close enough to the boundary $\pa Q_{t^n}$ to ensure that the initial value $\pi^{t^n,z^n,a^n,u^n}_0$ is arbitrarily close to the boundary $\pa S^m$. More precisely, we choose the points $(z^n)_{n \geq 1}$ such that $|x^n - z^n| \leq d(x^n,\pa Q_{t^n})$, and such that the corresponding initial values satisfy $d(\pi^{t^n,z^n,a^n,u^n}_0,\pa S^m) \to 0$ as $n \to \infty$. In particular, we then have that
$$|\pi^{t^n,x^n,a^n,u^n}_0 - \pi^{t^n,z^n,a^n,u^n}_0| \leq (C + 1)|x^n - z^n| \leq (C + 1)d(x^n,\pa Q_{t^n}) \, \longrightarrow \, 0 \quad \text{as} \quad n\, \longrightarrow \, \infty.$$
Since $d(\pi^{t^n,z^n,a^n,u^n}_0,\pa S^m) \to 0$ as $n \to \infty$, we can find a sequence $(y^n)_{n \geq 1} \subset \pa S^m$ such that $|\pi^{t^n,z^n,a^n,u^n}_0 - y^n| \to 0$ as $n \to \infty$. Then
\begin{align*}
d(\pi^{t^n,x^n,a^n,u^n}_0,\pa S^m) &\leq |\pi^{t^n,x^n,a^n,u^n}_0 - y^n|\\
&\leq |\pi^{t^n,x^n,a^n,u^n}_0 - \pi^{t^n,z^n,a^n,u^n}_0| + |\pi^{t^n,z^n,a^n,u^n}_0 - y^n| \, \longrightarrow \, 0
\end{align*}
as $n \to \infty$, and hence, since $g \in C^\uparrow(S^m \times \R^k;\R)$, we deduce that
$$g(\pi^{t^n,x^n,a^n,u^n}_0,\gamma^{t^n,a^n,u^n}_0) \, \longrightarrow \, \infty \qquad \text{as} \quad n \, \longrightarrow \, \infty,$$
contradicting \eqref{eq:seq f g bounded} (since $f$ is bounded below).
\end{proof}

\section{Hamilton--Jacobi equations}\label{sec HJ equations}

\subsection{A smooth regularization}

Our main aim is to establish the function $\kappa$ (recall \eqref{eq:defnkappaHMM} or \eqref{eq:valuefuncHMM}) as the solution of a rough HJ equation (namely \eqref{eq:HMMroughHJB} below). As in Diehl et al.~\cite{DiehlFrizGassiat2017}, we first approximate the rough path $\bY$ by a smooth path $\eta$. Having solved the associated classical control problem, as is a standard strategy for rough ODEs and PDEs, we can define solutions to our HJ equation for genuinely rough driving paths by taking the closure of smooth paths in rough path topology.

Accordingly, given a smooth path $\eta \colon [0,T] \to \R^d$, we define the approximate value function:
\begin{align}
&\kappa^\eta(t,x,a)\label{eq:valuefuncsmoothHMM}\\
&= \inf_{u \in \cU} \bigg\{\int_0^t f(\pi^{t,x,a,u}_s,\gamma^{t,a,u}_s,u_s)\,\rd s + \int_0^t \psi(\pi^{t,x,a,u}_s,\gamma^{t,a,u}_s)\,\rd \eta_s + g(\pi^{t,x,a,u}_0,\gamma^{t,a,u}_0)\bigg\},\nonumber
\end{align}
where here the state variables $\pi, \gamma$ satisfy
\begin{align}
\rd \pi^{t,x,a,u}_s &= b(\pi^{t,x,a,u}_s,\gamma^{t,a,u}_s)\hspace{1pt}\rd s + \phi(\pi^{t,x,a,u}_s,\gamma^{t,a,u}_s) \, \rd \eta_s, & \pi^{t,x,a,u}_t &= x,\label{eq:contrdynamicssmoothHMM}\\
\rd \gamma^{t,a,u}_s &= u_s\,\rd s, & \gamma^{t,a,u}_t &= a.\nonumber
\end{align}

In the following, we will keep the assumption that $\ver{\bme}_{p,[0,T]} \leq L$, where $\bme$ is defined as the canonical lift of $\eta$, i.e.~$\bme = (\eta,\eta^{(2)}) \in \grp$ with
\begin{equation}\label{eq:HMM eta2 defn}
\eta^{(2)}_{s,t} := \int_s^t \eta_{s,r}\otimes\rd\eta_r \qquad \text{for all} \quad (s,t) \in \simplex,
\end{equation}
where, since $\eta$ is smooth, the integral may be understood in the Riemann--Stieltjes sense.

Recall Notations~\ref{notation Qt cD} and \ref{notation C up cD R}. Since in general the set of plausible posteriors $Q_t$, and hence also the domain $\cD$ on which the value function $\kappa$ is finite, may depend on the observation path, we will correspondingly write $Q^\eta_t := \{x \in S^m \, | \, \kappa^\eta(t,x,a) < \infty\} = \big\{x \in S^m \ \big| \ \exists u \in \cU \hspace{7pt} \text{such that} \hspace{7pt} \pi^{t,x,a,u}_0 \in S^m\big\}$ and $\cD^\eta := \cup_{t \in [0,T]} (\{t\} \times Q^\eta_t \times \R^k)$. Note however that, by the stability of solutions to rough differential equations (Corollary~\ref{corollary reverse time}), we have that $\cD^\eta \to \cD$ as $\bme \to \bY$ in the obvious sense; in particular, if $(t,x,a) \in \cD$, then $(t,x,a) \in \cD^\eta$ whenever $\|\bme;\bY\|_p$ is sufficiently small.

By simply replacing $\bY$ by $\bme$ in the corresponding proofs, the approximate value function $\kappa^\eta$ inherits all the properties established in the previous section, namely:
\begin{itemize}
\item $\kappa^\eta$ is bounded below, and locally bounded above on $\cD^\eta$,
\item $\kappa^\eta$ satisfies the dynamic programming principle, i.e.~\eqref{eq:HMM DPP} with $\bY$ replaced by $\eta$,
\item $\kappa^\eta = \kappa^\eta(t,x,a)$ is locally Lipschitz continuous in $(x,a)$, uniformly in $t \in [0,T]$,
\item $\kappa^\eta = \kappa^\eta(t,x,a)$ is continuous in $t$, with a local modulus of continuity which is uniform in $(x,a)$,
\item $\kappa^\eta \in C^\uparrow(\cD^\eta;\R)$.
\end{itemize}
Moreover, $\kappa^\eta$ is actually Lipschitz continuous in $t$, locally uniformly on $\cD^\eta$. To see this, we recall the estimate \eqref{eq:kappa cts in t} from the proof of Proposition~\ref{prop kappa cts t}, which, replacing $\bY$ by $\bme$, reads:
\begin{equation}\label{eq:kappa cts in t 2}
\big|\kappa^\eta(t,x,a) - \kappa^\eta(r,x,a)\big| \lesssim |t - r| + \ver{\bme}_{p,[r,t]}
\end{equation}
for all $(r,x,a), (t,x,a)$ in a given compact and convex subset of $\cD^\eta$. Since $\eta$ is smooth, a short calculation shows that $\ver{\bme}_{p,[r,t]} = \|\eta\|_{p,[r,t]} + \|\eta^{(2)}\|_{\frac{p}{2},[r,t]} \lesssim (1 + \|\eta\|_\infty)\|\dot{\eta}\|_\infty|t - r|$. Substituting this into \eqref{eq:kappa cts in t 2}, we deduce that $\kappa^\eta$ is Lipschitz in $t$.

\subsection{A smooth HJ equation}

We will return to the (rough) value function $\kappa$ in Section~\ref{sec rough HJ eqn} below. For now we will restrict our attention to the smoothed version $\kappa^\eta$, as defined in \eqref{eq:valuefuncsmoothHMM}, and introduce the associated HJ equation:
\begin{align}
\frac{\pa\kappa^\eta}{\pa t} + b\cdot\nabla_x\kappa^\eta + \sup_{u \in \R^k} \big\{u\cdot\nabla_a\kappa^\eta - f\big\} + \big(\phi\cdot\nabla_x\kappa^\eta - \psi\big)\deta &= 0 & &\text{on}\hspace{10pt} \cD^\eta,\label{eq:HMMsmoothHJB}\\
\kappa^\eta(0,\cdot\hspace{1pt},\cdot) &= g & &\text{on}\hspace{10pt} S^m \times \R^k,\label{eq:HMMsmoothHJBinitcond}
\end{align}
for a smooth approximation $\eta$ of $\bY$, where as usual $\deta$ denotes the derivative of $\eta$.

\begin{remark}
Since the spatial variable $x$ is confined to the simplex $S^m \subset \R^m$, it may not seem meaningful to consider taking the gradient $\nabla_x$. However, since the coefficients $b$ and $\phi$ always remain directed within the simplex, the directional derivatives $b\cdot\nabla_x$ and $\phi\cdot\nabla_x$ always exist.
\end{remark}

In the following we consider solutions of \eqref{eq:HMMsmoothHJB}--\eqref{eq:HMMsmoothHJBinitcond} in the sense of viscosity solutions. The unfamiliar reader is referred to Barles \cite{AchdouBarlesLitvinovIshii2013} or Crandall, Ishii and Lions \cite{CrandallIshiiLions1992} for a detailed explanation.

\begin{definition}\label{defn viscosity soln}
We say that a continuous function $v \colon \cD^\eta \to \R$ is a viscosity subsolution (resp.~supersolution) of \eqref{eq:HMMsmoothHJB}--\eqref{eq:HMMsmoothHJBinitcond} if $v(0,x,a) \leq(\text{resp.} \geq)\ g(x,a)$ for all $(x,a) \in S^m \times \R^k$, and, for any point $(t,x,a) \in \cD^\eta$ with $t \in (0,T]$,
\begin{align}
\frac{\pa \vp}{\pa t}(t,x,a) + b(x,a)\cdot\nabla_x \vp(&t,x,a) + \sup_{u \in \R^k} \big\{u\cdot\nabla_a \vp(t,x,a) - f(x,a,u)\big\}\nonumber\\
&+ \big(\phi(x,a)\cdot\nabla_x \vp(t,x,a) - \psi(x,a)\big)\deta_t \leq\hspace{-1pt}(\text{resp.} \geq)\hspace{2pt} 0\label{eq:viscosity soln}
\end{align}
for every smooth function $\vp \colon \cD^\eta \to \R$ such that $v - \vp$ has a local maximum (resp.~local minimum) at the point $(t,x,a)$. We say that $v$ is a viscosity solution if it is both a viscosity subsolution and a viscosity supersolution.
\end{definition}

\begin{proposition}\label{prop value func is vis soln}
Under Assumptions~\ref{assumption uncertainty parameterisation} and \ref{assumptions f g}, the approximate value function $\kappa^\eta$ is a viscosity solution of \eqref{eq:HMMsmoothHJB}--\eqref{eq:HMMsmoothHJBinitcond}.
\end{proposition}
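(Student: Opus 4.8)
The plan is to argue, as in the classical theory of Hamilton--Jacobi--Bellman equations, directly from the dynamic programming principle; the point is that for smooth $\eta$ everything reduces to a classical (if control-unbounded) problem. Indeed, since $\eta$ is smooth, the dynamics \eqref{eq:contrdynamicssmoothHMM} form an ordinary controlled ODE and the integral $\int_0^\cdot\psi(\pi_s,\gamma_s)\,\rd\eta_s$ in \eqref{eq:valuefuncsmoothHMM} is the Lebesgue integral $\int_0^\cdot\psi(\pi_s,\gamma_s)\deta_s\,\rd s$, so $\kappa^\eta$ is the value function of a classical control problem and the chain rule is available along trajectories. The initial condition \eqref{eq:HMMsmoothHJBinitcond} is immediate: at $t = 0$ both integrals in \eqref{eq:valuefuncsmoothHMM} vanish and $\pi^{0,x,a,u}_0 = x$, $\gamma^{0,a,u}_0 = a$ for every $u \in \cU$, whence $\kappa^\eta(0,x,a) = g(x,a)$.

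\emph{Subsolution.} Fix $(t,x,a) \in \cD^\eta$ with $t > 0$ and a smooth $\vp$ such that $\kappa^\eta - \vp$ has a local maximum at $(t,x,a)$. For a fixed constant $w \in \R^k$, I would insert the constant control $u \equiv w$ into the $\eta$-analogue of the dynamic programming principle (Lemma~\ref{lemma HMM DPP}) on an interval $[r,t]$ with $r < t$. As $r \to t^-$ the trajectory $(\pi_s,\gamma_s)_{s \in [r,t]}$ converges to $(x,a)$ (since $w$ is fixed and $b,\phi,\deta$ are bounded), so it eventually enters the neighbourhood of the maximum, where $\kappa^\eta(r,\pi_r,\gamma_r) \leq \kappa^\eta(t,x,a) - \vp(t,x,a) + \vp(r,\pi_r,\gamma_r)$. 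Substituting this into the DPP, cancelling $\kappa^\eta(t,x,a)$, expanding $\vp(t,x,a) - \vp(r,\pi_r,\gamma_r)$ by the chain rule along the trajectory, dividing by $t - r$ and letting $r \to t^-$ gives
\begin{equation*}
\frac{\pa\vp}{\pa t} + b\cdot\nabla_x\vp + w\cdot\nabla_a\vp - f(x,a,w) + \big(\phi\cdot\nabla_x\vp - \psi\big)\deta_t \leq 0
\end{equation*}
at $(t,x,a)$, for every $w \in \R^k$; taking the supremum over $w$---which is finite by the superlinearity of $f$ in $u$ (Assumption~\ref{assumptions f g}(ii))---yields \eqref{eq:viscosity soln} with ``$\leq$''.

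\emph{Supersolution.} Fix $(t,x,a) \in \cD^\eta$ with $t > 0$ and smooth $\vp$ with $\kappa^\eta - \vp$ having a local minimum at $(t,x,a)$; normalise so that $\kappa^\eta(t,x,a) = \vp(t,x,a)$ and $\kappa^\eta \geq \vp$ on a neighbourhood $\cN$. Suppose for contradiction that the left-hand side of \eqref{eq:viscosity soln} is $< -2\theta < 0$ at $(t,x,a)$. By continuity we may shrink $\cN$ so that $\frac{\pa\vp}{\pa s} + b\cdot\nabla_x\vp + u\cdot\nabla_a\vp - f(y,c,u) + (\phi\cdot\nabla_x\vp - \psi)\deta_s < -\theta$ for all $(s,y,c) \in \cN$ and all $u \in \R^k$ (the term $u\cdot\nabla_a\vp - f(y,c,u)$ being dominated by the supremum over $w$). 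For any control whose trajectory stays in $\cN$ on $[r,t]$, the chain rule then forces $\int_r^t f\,\rd s + \int_r^t \psi\,\rd\eta_s + \kappa^\eta(r,\pi_r,\gamma_r) > \kappa^\eta(t,x,a) + \theta(t-r)$. The key verification is that, for $\varepsilon$ small and $r$ close to $t$, every $\varepsilon(t-r)$-optimal control for the DPP on $[r,t]$ has this confinement property: the $\pi$-component is automatically confined because $b,\phi,\deta$ are bounded, while for $\gamma$ one first bounds $\int_r^t f\,\rd s$ by a fixed constant (using that $\kappa^\eta$ is bounded below, Lemma~\ref{lemma value func loc bdd}, and that $\int_r^t\psi\,\rd\eta_s = O(t-r)$), then uses the superlinearity bound $|u| \leq C_\varepsilon + \varepsilon f$ to get $\int_r^t|u_s|\,\rd s \leq C_\varepsilon(t-r) + \varepsilon C_1$, which is made smaller than the radius of $\cN$ by choosing $\varepsilon$ small and then $r$ close to $t$; the $\eta$-analogue of Corollary~\ref{corollary HMM restrict controls} keeps these trajectories in a fixed compact subset of $\cD^\eta$, so $\kappa^\eta(r,\pi_r,\gamma_r)$ remains finite. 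Choosing in addition $\varepsilon < \theta$ contradicts the near-optimality, and we conclude that \eqref{eq:viscosity soln} holds with ``$\geq$''. (The scheme parallels the classical one, cf.\ \cite[Chapter~4]{YongZhou1999}.)

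\emph{Main obstacle.} The genuine difficulty is the unboundedness of the control set $\R^k$: one cannot appeal to compactness of the controls as in the textbook proofs. Superlinearity of $f$ in $u$ is what rescues the scheme---it makes the Hamiltonian $\sup_u\{u\cdot\nabla_a\vp - f\}$ finite and continuous, and it supplies the $L^1$-bound on near-optimal controls used above to confine trajectories. Combined with the a priori bounds of Lemma~\ref{lemma value func loc bdd} and Corollary~\ref{corollary HMM restrict controls}, this also disposes of the fact that the spatial domain $Q^\eta_t$ is only a (time-dependent, possibly proper) open subset of $S^m$: near-optimal trajectories issuing from an interior point of $\cD^\eta$ remain in the interior, so no boundary terms enter the local analysis.
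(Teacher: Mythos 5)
Your argument is correct and is precisely the standard dynamic-programming verification that the paper itself invokes only by citation (it points to Theorem~2.5 in \cite[Chapter~4]{YongZhou1999} and Proposition~4.9 in \cite{AllanCohen2019} instead of writing out a proof). Your treatment of the unbounded control set via the superlinearity of $f$ in $u$ --- making the Hamiltonian finite and continuous, and confining near-optimal trajectories through the bound $|u|\leq C_\epsilon+\epsilon f$ --- is exactly the adaptation of that classical argument which this setting requires.
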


The proof of Proposition~\ref{prop value func is vis soln} is standard---see Theorem~2.5 in \cite[Chapter~4]{YongZhou1999} or Proposition~4.9 in \cite{AllanCohen2019} for details in analogous settings.

The result of Proposition~\ref{prop kappa blows up} above shows that the (approximate) value function explodes near the boundary of the domain $\cD^\eta$. In fact, this `explosive boundary condition' is precisely the extra condition needed to obtain uniqueness for the corresponding HJ equation.

\begin{theorem}\label{theorem HMM kappaeta unique soln}
Under Assumptions~\ref{assumption uncertainty parameterisation} and \ref{assumptions f g}, the approximate value function $\kappa^\eta$ is both the minimal viscosity supersolution and the maximal viscosity subsolution of \eqref{eq:HMMsmoothHJB}--\eqref{eq:HMMsmoothHJBinitcond} in the class $C^\uparrow(\cD^\eta;\R)$, and is thus the unique viscosity solution of \eqref{eq:HMMsmoothHJB}--\eqref{eq:HMMsmoothHJBinitcond} in the class $C^\uparrow(\cD^\eta;\R)$.
\end{theorem}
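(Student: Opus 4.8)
The plan is to prove Theorem~\ref{theorem HMM kappaeta unique soln} via a comparison principle tailored to the class $C^\uparrow(\cD^\eta;\R)$: the explosive boundary behaviour of functions in this class will substitute for a boundary condition on $\pa Q^\eta_t$ and the growth condition in $a$ will localize the problem. Since Proposition~\ref{prop value func is vis soln} already establishes that $\kappa^\eta$ is a viscosity solution lying in $C^\uparrow(\cD^\eta;\R)$ (by Proposition~\ref{prop kappa blows up}, transferred to the smooth setting), it suffices to prove the following comparison statement: if $v \in C^\uparrow(\cD^\eta;\R)$ is a viscosity subsolution and $w \in C^\uparrow(\cD^\eta;\R)$ is a viscosity supersolution of \eqref{eq:HMMsmoothHJB}--\eqref{eq:HMMsmoothHJBinitcond}, then $v \leq w$ on $\cD^\eta$. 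Applying this with $w = \kappa^\eta$ shows $\kappa^\eta$ is the maximal subsolution; applying it with $v = \kappa^\eta$ shows it is the minimal supersolution; uniqueness follows immediately.

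\textbf{First}, I would reduce to a local comparison on compact subsets. Fix a compact convex $\Delta \subset \cD^\eta$ of the form $[0,T] \times \Xi$ with $\Xi$ a compact subset of the (time-dependent) spatial domain --- more precisely, a set of the form $\{(t,x,a) : t \in [0,T],\ x \in Q^\eta_t,\ d(x,\pa Q^\eta_t) \geq \delta,\ |a| \leq R\}$ for small $\delta$ and large $R$. The key point is that because $v, w \in C^\uparrow$, the function $v - w$ cannot attain a positive maximum on $\cD^\eta$ near the boundary of $Q^\eta_t$ (where $w \to +\infty$ dominates, since $v$ also blows up but we only need $w$'s blow-up to beat any fixed value of $v - w$ --- actually one needs both, so the clean argument is: $v - w$ is bounded above on $\cD^\eta$ and any maximizing sequence stays in some such $\Delta$, because if $d(x^n,\pa Q^\eta_{t^n}) \to 0$ or $|a^n| \to \infty$ then $w(t^n,x^n,a^n) \to \infty$ while $v$ is bounded above on the relevant sublevel set... one has to be slightly careful and use a penalization by $-\epsilon w$ or $+\epsilon/d(x,\pa Q^\eta_t)$ to force the max into the interior). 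So I would introduce, for $\epsilon > 0$, the perturbed function $v - w - \epsilon \chi$ where $\chi \in C^\uparrow(\cD^\eta;\R)$ is a fixed strict supersolution-type barrier (e.g.\ built from $\kappa^\eta$ itself, or from $-\log d(x,\pa Q^\eta_t) + |a|^2$ suitably); the explosion of $\chi$ confines the supremum of $v - w - \epsilon\chi$ to a compact set $\Delta_\epsilon \subset \cD^\eta$, and one shows this supremum is $\leq 0$ by the interior comparison argument, then lets $\epsilon \to 0$.

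\textbf{Next}, on the compact set $\Delta_\epsilon$ I would run the standard Crandall--Ishii doubling-of-variables argument: double in all of $(t,x,a)$, penalize with $\frac{1}{2\alpha}(|t - t'|^2 + |x - x'|^2 + |a - a'|^2)$, use the continuity of $b, \phi$ (Lipschitz, by Assumption~\ref{assumption uncertainty parameterisation}) and the local Lipschitz continuity of $f$ in $(x,a)$ uniformly in $u$ (Assumption~\ref{assumptions f g}(i)) together with the smoothness and boundedness of $\deta$ to control the Hamiltonian differences. The supremum over $u \in \R^k$ in the Hamiltonian is handled as usual: it is a sup of affine-in-gradient terms minus $f$, and since $f$ is superlinear in $u$ (Assumption~\ref{assumptions f g}(ii)) the sup is attained and the maximizing $u$ is controlled in terms of the gradients, which on a compact set are bounded --- so effectively the Hamiltonian is Lipschitz in the gradient variable and continuous in $(x,a)$, which is precisely what the comparison machinery needs. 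The terminal/initial condition $\kappa^\eta(0,\cdot,\cdot) = g$ provides the boundary inequality at $t = 0$; for $t = T$ there is no condition, as usual for this type of (forward, from $t=0$) HJ equation.

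\textbf{The main obstacle} I anticipate is handling the \emph{time-dependent and irregular spatial domain} $t \mapsto Q^\eta_t$: the barrier function $\chi$ must genuinely belong to $C^\uparrow(\cD^\eta;\R)$ and be compatible with the differential inequality (so that subtracting $\epsilon\chi$ from a subsolution still yields a strict subsolution, or at least a supersolution of a slightly perturbed equation), and verifying its viscosity-sense properties near $\pa Q^\eta_t$ requires care precisely because $\pa Q^\eta_t$ inherits the roughness of $\bY$ (or rather of $\eta$; for fixed smooth $\eta$ it is merely Lipschitz in $t$, which helps, but still needs a clean argument). The cleanest route is probably to take $\chi = \kappa^\eta$ itself plus a coercive term $|a|^2$: since $\kappa^\eta$ is already a (viscosity) solution, $v - \epsilon\kappa^\eta$ is a subsolution of a perturbed equation whose Hamiltonian differs by a controlled amount, and the coercivity in $a$ is added by hand. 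One then only needs the comparison on truly interior compact sets, where all coefficients are bounded and Lipschitz, and the argument becomes entirely classical. I would flag that the reduction to the interior --- i.e.\ the claim that the perturbed difference attains its sup in a compact subset of $\cD^\eta$ --- is where the definition of $C^\uparrow$ does its real work, and I would write that step out carefully while citing \cite{DiehlFrizGassiat2017} or \cite{AllanCohen2019} for the analogous explosive-boundary uniqueness arguments and \cite{CrandallIshiiLions1992} for the interior doubling estimates.
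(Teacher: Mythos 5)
Your overall architecture (localize to a compact subset of $\cD^\eta$, then run an interior comparison argument) matches the paper's in outline, and the interior step you describe would be fine once localization is achieved. The genuine gap is in the localization itself, exactly where you flag uncertainty. You propose to force the supremum of $v-w$ (or $v-w-\epsilon\chi$) into a compact subset of $\cD^\eta$ by exploiting the explosion of $w$ and of a barrier $\chi$ near $\pa Q^\eta_t$ and for $|a|\to\infty$. But membership in $C^\uparrow(\cD^\eta;\R)$ prescribes no \emph{rate} of explosion: the competitor may blow up arbitrarily slowly, while the other function (in the relevant application, $\kappa^\eta$ itself) blows up at its own rate. Near the boundary the difference is of the form $\infty-\infty$ with no ordering, so neither $v-w$ nor $v-w-\epsilon\chi$ need tend to $-\infty$ there; and since $\chi$ must be fixed in advance, no single barrier dominates every element of the class. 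Your suggested choice $\chi=\kappa^\eta+|a|^2$ does not repair this: for the direction $\kappa^\eta\le v$ one is left with $(1-\epsilon)\kappa^\eta-v$, which can still diverge to $+\infty$ along $\pa Q^\eta_t$ if the supersolution $v$ explodes more slowly than $\kappa^\eta$. So the reduction to a compact set --- the step on which everything else rests --- is not established by your argument.

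The paper resolves this not with a PDE barrier but with the control representation. Given the supersolution $v$ and a compact $\Delta\subset\cD^\eta$, it constructs an auxiliary value function $\cK$ whose initial cost $G\le g$ is chosen \emph{after seeing $v$}: $G=g$ on a region $S^m_\delta\times B_\delta$ large enough to contain the initial points of all relevant trajectories emanating from $\Delta$ (so that $\cK=\kappa^\eta$ on $\Delta$), but growing so slowly outside it that $\cK\le v$ on the parabolic boundary of a compact subdomain $\cD^\eta_\epsilon\supset\Delta$. Since $G\le g$, $\cK$ is a viscosity subsolution, and the standard comparison principle on the compact set $\cD^\eta_\epsilon$ (where $\cK$ is Lipschitz) yields $\kappa^\eta=\cK\le v$ on $\Delta$. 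The essential idea you are missing is that the boundary growth of a value function is governed by its initial cost, so the localization is achieved by weakening the initial cost rather than by erecting a barrier; this also sidesteps the delicate analysis near the time-dependent boundary $\pa Q^\eta_t$ that your doubling-plus-barrier route would require. A comparison principle between two arbitrary members of $C^\uparrow(\cD^\eta;\R)$, uniform over explosion rates, is precisely what your method would need and what it does not deliver.
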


\begin{proof}
We will prove the minimality of $\kappa^\eta$ among viscosity supersolutions of \eqref{eq:HMMsmoothHJB}--\eqref{eq:HMMsmoothHJBinitcond}. The proof of maximality among subsolutions follows a similar argument, and is hence omitted for brevity.

Let $v \in C^\uparrow(\cD^\eta;\R)$ be another viscosity supersolution of \eqref{eq:HMMsmoothHJB}--\eqref{eq:HMMsmoothHJBinitcond}, and let $\Delta$ be a compact subset of $\cD^\eta$. Recalling \eqref{eq:HMMpsiintlocbound} and the fact that $f$ is bounded below, we have
\begin{align*}
&\int_0^t f(\pi^{t,x,a,u}_s,\gamma^{t,a,u}_s,u_s)\,\rd s + \int_0^t \psi(\pi^{t,x,a,u}_s,\gamma^{t,a,u}_s)\,\rd \eta_s + g(\pi^{t,x,a,u}_0,\gamma^{t,a,u}_0)\\
&\geq g(\pi^{t,x,a,u}_0,\gamma^{t,a,u}_0) - C
\end{align*}
for some constant $C$. Since $\kappa^\eta$ is locally bounded above (by Lemma~\ref{lemma value func loc bdd}), and hence bounded above on $\Delta$, we infer that there exists a bound $\lambda > 0$ such that, for $(t,x,a) \in \Delta$, we can restrict to controls $u \in \cU$ such that
\begin{equation}\label{eq:gpi0lesslambda}
g(\pi^{t,x,a,u}_0,\gamma^{t,a,u}_0) < \lambda.
\end{equation}

We define, for $\delta > 0$, the subdomains
\begin{align*}
Q^{\eta,\delta}_t &= \big\{x \in Q^\eta_t \, \big|\ d(x,\pa Q^\eta_t) > \delta\big\}, & B_\delta &= \big\{a \in \R^k \, \big|\ |a| < \delta^{-1}\big\},\\
S^m_\delta = Q^{\eta,\delta}_0 &= \big\{x \in S^m \, \big|\ d(x,\pa S^m) > \delta\big\}, & \cD^\eta_\delta &= \bigcup_{t \in [0,T]} \big(\{t\} \times Q^{\eta,\delta}_t \times B_\delta\big).
\end{align*}
As $g \in C^\uparrow(S^m \times \R^k;\R)$ (by part (iii) of Assumption~\ref{assumptions f g}), there exists a $\delta > 0$ sufficiently small such that
\begin{itemize}
\item $g > \lambda$ on $(S^m \times \R^k) \setminus (S^m_\delta \times B_\delta)$.
\end{itemize}
In particular, by \eqref{eq:gpi0lesslambda}, we have that $(\pi^{t,x,a,u}_0,\gamma^{t,a,u}_0) \in S^m_\delta \times B_\delta$ whenever $(t,x,a) \in \Delta$. Let $G \in C^\uparrow(S^m \times \R^k;\R)$ be a locally Lipschitz function, bounded below, and such that
\begin{itemize}
\item $G \leq g$,
\item $G = g$ on $S^m_\delta \times B_\delta$, and
\item $G > \lambda$ on $(S^m \times \R^k) \setminus (S^m_\delta \times B_\delta)$.
\end{itemize}
We define a new value function $\cK$, similarly to $\kappa^\eta$, but with initial cost function $G$, viz.
\begin{align*}
&\cK(t,x,a)\\
&= \inf_{u \in \cU} \bigg\{\int_0^t f(\pi^{t,x,a,u}_s,\gamma^{t,a,u}_s,u_s)\,\rd s + \int_0^t \psi(\pi^{t,x,a,u}_s,\gamma^{t,a,u}_s)\,\rd \eta_s + G(\pi^{t,x,a,u}_0,\gamma^{t,a,u}_0)\bigg\}.
\end{align*}
In particular, since $G \leq g$, $\cK$ is a viscosity subsolution of \eqref{eq:HMMsmoothHJB}--\eqref{eq:HMMsmoothHJBinitcond}. Moreover, by Proposition~\ref{prop kappa loc Lip x}, $\cK$ is locally Lipschitz continuous in $(x,a)$, uniformly in $t \in [0,T]$.

As noted above, we have that $(\pi^{t,x,a,u}_0,\gamma^{t,a,u}_0) \in S^m_\delta \times B_\delta$ for all $(t,x,a) \in \Delta$, which is also true for our new initial cost $G$, since $G > \lambda$ on $(S^m \times \R^k) \setminus (S^m_\delta \times B_\delta)$. As $G = g$ on $S^m_\delta \times B_\delta$, it follows that
\begin{equation}\label{eq:nu equals kappaeta}
\cK = \kappa^\eta \quad \text{on} \quad \Delta.
\end{equation}

The asymptotic growth rate of $\cK(t,x,a)$ as $d(x,\pa Q^\eta_t) \to 0$, and as $|a| \to \infty$, depends on the growth rate of the initial cost $G$. By taking $G$ to grow sufficiently slowly, we can ensure that $\cK$ is dominated asymptotically by $v$. More precisely, since $v \in C^\uparrow(\cD^\eta;\R)$, we can choose $G$ so that there exists an $\epsilon > 0$ such that
\begin{itemize}
\item $\Delta \subset \cD^\eta_\epsilon$, and
\item \begin{equation}\label{eq:cK less v on spatial boundary}
\cK \leq v \quad \text{on} \quad \bigcup_{t \in [0,T]} \Big(\{t\} \times \big((\pa Q^{\eta,\epsilon}_t \times B_\epsilon) \cup (Q^{\eta,\epsilon}_t \times \pa B_\epsilon)\big)\Big).
\end{equation}
\end{itemize}
As $\cK(0,x,a) = G(x,a) \leq g(x,a) \leq v(0,x,a)$ for all $(x,a) \in S^m \times \R^k$, we have in particular that
\begin{equation}\label{eq:cK less v on time boundary}
\cK \leq v \quad \text{on} \quad \{0\} \times Q^{\eta,\epsilon}_0 \times B_\epsilon.
\end{equation}
Combining \eqref{eq:cK less v on spatial boundary} and \eqref{eq:cK less v on time boundary}, we have that $\cK \leq v$ on the parabolic boundary of $\cD^\eta_\epsilon$. By the standard comparison principle for viscosity (sub/super-)solutions---e.g.~Theorem~5.1 in Barles\footnote{The hypotheses of the theorem are satisfied since $\cK$ is locally Lipschitz, and hence Lipschitz on the compact set $\overline{\cD}^\eta_\epsilon$. Strictly speaking the domain $\cD^\eta_\epsilon$ is not in general a simple Cartesian product of the spatial and temporal domains, as in the setting of Barles, but this is of no consequence, and requires only a trivial adaptation to the proof of the comparison principle.} \cite{AchdouBarlesLitvinovIshii2013}---applied with the subsolution $\cK$ and the supersolution $v$, it follows that $\cK \leq v$ on $\cD^\eta_\epsilon$. Since $\Delta \subset \cD^\eta_\epsilon$, it then follows from \eqref{eq:nu equals kappaeta} that
$$\kappa^\eta = \cK \leq v \quad \text{on} \quad \Delta.$$
Since $\Delta$ was arbitrary, we conclude that $\kappa^\eta \leq v$ on the entire domain $\cD^\eta$.
\end{proof}

\subsection{A rough HJ equation}\label{sec rough HJ eqn}

Replacing the smooth path $\eta$ by the rough path $\bY$ in \eqref{eq:HMMsmoothHJB}--\eqref{eq:HMMsmoothHJBinitcond}, we obtain the rough HJ equation:
\begin{align}
\rd\kappa + b\cdot\nabla_x\kappa\,\rd t + \sup_{u \in \R^k} \big\{u\cdot\nabla_a\kappa - f\big\}\hspace{0.8pt}\rd t + \big(\phi\cdot\nabla_x\kappa - \psi\big)\hspace{0.8pt}\rd \bY_t &= 0 & &\text{on}\hspace{10pt} \cD,\label{eq:HMMroughHJB}\\
\kappa(0,\cdot\hspace{1pt},\cdot) &= g & &\text{on}\hspace{10pt} S^m \times \R^k.\label{eq:HMMroughHJBinitcond}
\end{align}
We understand a solution to this equation in the sense of the following definition, sometimes known as a rough viscosity solution, used by Diehl et al.~\cite{DiehlFrizGassiat2017}, as well as for instance by Caruana, Friz and Oberhauser \cite{CaruanaFriz2009,CaruanaFrizOberhauser2011,FrizOberhauser2014} (see also Chapter~12 in Friz and Hairer \cite{FrizHairer2014}).

\begin{definition}\label{HMMdefnsolnroughHJB}
Given a smooth path $\eta$, we write $\bme = (\eta,\eta^{(2)}) \in \grp$ for its canonical lift, with $\eta^{(2)}$ defined as in \eqref{eq:HMM eta2 defn}. We write $\kappa^\eta$ for the unique viscosity solution of \eqref{eq:HMMsmoothHJB}--\eqref{eq:HMMsmoothHJBinitcond} in the class $C^\uparrow(\cD^\eta;\R)$, which by Theorem~\ref{theorem HMM kappaeta unique soln} is precisely the approximate value function, as defined in \eqref{eq:valuefuncsmoothHMM}. We say that a continuous function $v$ solves the rough HJ equation \eqref{eq:HMMroughHJB}--\eqref{eq:HMMroughHJBinitcond} if
\begin{equation}\label{eq:defn kappa eta n to v}
\kappa^{\eta^n} \longrightarrow\, v \qquad \text{as} \quad\ \ n\, \longrightarrow\, \infty
\end{equation}
locally uniformly on $\cD$, whenever $(\eta^n)_{n \geq 1}$ is a sequence of smooth paths such that $\bme^n \to \bY$ with respect to the $p$-variation rough path distance, i.e.~$\|\bme^n;\bY\|_p \to 0$ as $n \to \infty$.
\end{definition}

We note that if such a solution of \eqref{eq:HMMroughHJB}--\eqref{eq:HMMroughHJBinitcond} exists, then it is unique. Moreover, since the rough path $\bY \in \grp$ is geometric, there certainly exists such a sequence of smooth paths $(\eta^n)_{n \geq 1}$.

We can now state our main result.

\begin{theorem}\label{thm value func solves rough HJ}
Under Assumptions~\ref{assumption uncertainty parameterisation} and \ref{assumptions f g}, the value function $\kappa$, as defined in \eqref{eq:defnkappaHMM} and \eqref{eq:valuefuncHMM}, solves the rough HJ equation \eqref{eq:HMMroughHJB}--\eqref{eq:HMMroughHJBinitcond} in the sense of Definition~\ref{HMMdefnsolnroughHJB}. Moreover, writing $\kappa = \kappa^\bY$, the map from $\grp \to \R$ given by $\bY \mapsto \kappa^\bY(t,x,a)$ is locally uniformly continuous with respect to the $p$-variation rough path distance, locally uniformly in $(t,x,a)$.
\end{theorem}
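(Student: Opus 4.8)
The two assertions both follow once we establish that the map $\bY \mapsto \kappa^\bY(t,x,a)$ is locally Lipschitz with respect to the $p$-variation rough path distance, locally uniformly in $(t,x,a)$. Indeed, applying this with $\bY$ replaced by the canonical lifts $\bme^n$ of a sequence $\eta^n \to \bY$ (noting that $\kappa^{\bme^n} = \kappa^{\eta^n}$, since the rough and Riemann--Stieltjes integrals, as well as the respective RDE and ODE solutions, coincide for canonical lifts) gives $\kappa^{\eta^n} \to \kappa^\bY$ locally uniformly on $\cD$, which is precisely the statement that $\kappa = \kappa^\bY$ solves \eqref{eq:HMMroughHJB}--\eqref{eq:HMMroughHJBinitcond} in the sense of Definition~\ref{HMMdefnsolnroughHJB} (and the limit in that definition is unique).

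So fix a compact set $\Delta \subset \cD$. The first task is to produce a $p$-variation neighbourhood of $\bY$ in $\grp$ on which all relevant constants are uniform. Using the backward stability estimates of Corollary~\ref{corollary reverse time}, the trajectories constructed in Step~2 of the proof of Lemma~\ref{lemma value func loc bdd} are perturbed by at most $\lesssim \|\bme;\bY\|_p$ when $\bY$ is replaced by a nearby lift $\bme$; hence, after shrinking the neighbourhood, we retain $\Delta \subset \cD^{\bme}$, the bound $\sup_\Delta \kappa^{\bme} < \infty$ is uniform over such $\bme$, and therefore the conclusion of Corollary~\ref{corollary HMM restrict controls} holds with a \emph{single} bound $M > 0$ and a \emph{single} compact set $K \subset S^m \times \R^k$ valid for every $\bme$ in the neighbourhood --- the proof of that corollary uses only the bound $L$ on the rough path norm together with this upper bound on the value function.

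Now fix such a $\bme$, a point $(t,x,a) \in \Delta$, and $\epsilon > 0$, and choose $u \in \cU$ with $\int_0^t f\,\rd s + \int_0^t \psi\,\rd\bY + g \le \kappa^\bY(t,x,a) + \epsilon$ (arguments suppressed); by Corollary~\ref{corollary HMM restrict controls} we may take $u \in \cU_{M,K}$. Since the component $\gamma^{t,a,u}$ is unaffected by the driving path, Corollary~\ref{corollary reverse time}, applied with the common terminal data $(x,a)$ and common parameter $\gamma^{t,a,u}$, gives $\|\pi^{t,x,a,u}[\bY] - \pi^{t,x,a,u}[\bme]\|_{p,[0,t]} \lesssim \|\bme;\bY\|_p$ together with the analogous bound for the difference of the rough integrals $\int_0^\cdot \psi(\pi,\gamma)\,\rd\bY$ and $\int_0^\cdot \psi(\pi,\gamma)\,\rd\bme$. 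In particular the $\bme$-trajectory stays within $\lesssim \|\bme;\bY\|_p$ of the compact subset $K$ of $S^m \times \R^k$, hence inside $S^m$ once the neighbourhood is small enough, so $u$ is admissible for $\bme$; using that $f$ and $g$ are Lipschitz on a fixed compact enlargement of $K$, we obtain $|\mathrm{cost}^{\bme}(u) - \mathrm{cost}^\bY(u)| \lesssim \|\bme;\bY\|_p$. Hence $\kappa^{\bme}(t,x,a) \le \mathrm{cost}^{\bme}(u) \le \kappa^\bY(t,x,a) + \epsilon + C\|\bme;\bY\|_p$; letting $\epsilon \to 0$ and then exchanging the roles of $\bY$ and $\bme$ yields $|\kappa^\bY(t,x,a) - \kappa^{\bme}(t,x,a)| \lesssim \|\bme;\bY\|_p$ uniformly over $(t,x,a) \in \Delta$, which is the desired local Lipschitz bound.

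The main obstacle is the uniformity bookkeeping in the second and third paragraphs: one must ensure that the restriction to $\cU_{M,K}$ and the Lipschitz constants of $f,g$ can be fixed once and for all over an entire $p$-variation neighbourhood of $\bY$, while simultaneously controlling how the rough domains $Q^{\bme}_t$ and $\cD^{\bme}$ move with $\bme$; and one must be careful to invoke the terminal-value stability estimate of Corollary~\ref{corollary reverse time} rather than the forward estimate of Theorem~\ref{theoremHMMRDE}(iii), since the control problem is posed backwards in time. Once these points are settled, the estimate itself is a routine combination of the RDE stability bounds with the local Lipschitz continuity of the data.
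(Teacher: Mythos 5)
Your proposal is correct and follows essentially the same route as the paper: restrict to controls in $\cU_{M,K}$ via Corollary~\ref{corollary HMM restrict controls}, apply the terminal-value stability estimates of Corollary~\ref{corollary reverse time} to compare the state trajectories and rough integrals for the two driving paths, and use local Lipschitz continuity of $f,g$ on the resulting compact set to bound $|\kappa^\bY - \kappa^{\bZ}|$ by $\lesssim \|\bY;\bZ\|_{\pvarzt}$, from which the convergence $\kappa^{\eta^n} \to \kappa^\bY$ follows. Your $\epsilon$-optimal-control symmetrization and your explicit check that an admissible $\bY$-control remains admissible for nearby lifts are minor (and slightly more careful) variants of the paper's step of taking the supremum of the cost difference over the common control class $\cU_{M,K}$.
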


\begin{proof}
Let $\bZ \in \rp$ be another rough path such that $\|\bY;\bZ\|_p \leq 1$. By possibly replacing $L$ by $L + 1$, we may assume that $\ver{\bZ}_p \leq L$. Let us write $\pi^{t,x,a,u,\bY}$ (resp.~$\pi^{t,x,a,u,\bZ}$) for the solution of the RDE \eqref{eq:contrdynamicsHMM} driven by $\bY$ (resp.~$\bZ$), and write $\kappa^\bY$ (resp.~$\kappa^\bZ$) for the corresponding value function, as defined in \eqref{eq:valuefuncHMM}, defined on the domain $\cD = \cD^\bY$ (resp.~$\cD^\bZ$).

Let $\Delta$ be a compact subset of $\cD^\bY \cap \cD^\bZ$. By Corollary~\ref{corollary HMM restrict controls}, we may restrict to controls $u \in \cU_{M,K}$, so that $\|\gamma^{t,a,u}\|_{\ptvarzt} \leq M$ for some $M > 0$, and there exists a compact subset $K \subset S^m \times \R^k$ such that $(\pi^{t,x,a,u,\bY}_s,\gamma^{t,a,u}_s) \in K$ and $(\pi^{t,x,a,u,\bZ}_s,\gamma^{t,a,u}_s) \in K$ for all $(t,x,a) \in \Delta$ and $s \in [0,t]$. We then deduce from Corollary~\ref{corollary reverse time}, that
\begin{align}
\|\pi^{t,x,a,u,\bY} - \pi^{t,x,a,u,\bZ}\|_{\pvarzt} &\lesssim \|\bY;\bZ\|_{\pvarzt},\label{eq:pi Y Z cts}\\
\bigg\|\int_0^\cdot \psi(\pi^{t,x,a,u,\bY}_s,\gamma^{t,a,u}_s)\,\rd\bY_s - \int_0^\cdot \psi(\pi^{t,x,a,u,\bZ}_s,\gamma^{t,a,u}_s)\,\rd\bZ_s\bigg\|_{\pvarzt} &\lesssim \|\bY;\bZ\|_{\pvarzt}.\nonumber
\end{align}
By part (i) of Assumption~\ref{assumptions f g}, as we have restricted the state trajectories to a compact set, we can take $f$ and $g$ to be Lipschitz in $(x,a)$, uniformly in $u$. Then, for any $(t,x,a) \in \Delta$,
\begin{align*}
\big|&\kappa^\bY(t,x,a) - \kappa^{\bZ}(t,x,a)\big|\\
&\leq \sup_{u \in \cU_{M,K}} \bigg|\int_0^t \big(f(\pi^{t,x,a,u,\bY}_s,\gamma^{t,a,u}_s,u_s) - f(\pi^{t,x,a,u,\bZ}_s,\gamma^{t,a,u}_s,u_s)\big)\,\rd s\\
&\hspace{50pt} + \int_0^t \psi(\pi^{t,x,a,u,\bY}_s,\gamma^{t,a,u}_s)\,\rd\bY_s - \int_0^t \psi(\pi^{t,x,a,u,\bZ}_s,\gamma^{t,a,u}_s)\,\rd\bZ_s\\
&\hspace{50pt} + g(\pi^{t,x,a,u,\bY}_0,\gamma^{t,a,u}_0) - g(\pi^{t,x,a,u,\bZ}_0,\gamma^{t,a,u}_0)\bigg|\\
&\lesssim \sup_{u \in \cU_{M,K}} \bigg(\int_0^t |\pi^{t,x,a,u,\bY}_s - \pi^{t,x,a,u,\bZ}_s|\,\rd s + \|\bY;\bZ\|_{\pvarzt} + |\pi^{t,x,a,u,\bY}_0 - \pi^{t,x,a,u,\bZ}_0|\bigg)\\
&\lesssim \|\bY;\bZ\|_{\pvarzt}.
\end{align*}

Let $(\eta^n)_{n \geq 1}$ be sequence of smooth paths such that $\|\bme^n;\bY\|_p \to 0$ as $n \to \infty$. It follows from \eqref{eq:pi Y Z cts} that the domain $\cD^{\eta_n}$ converges to $\cD$ in the obvious sense as $n \to \infty$. In particular, given a compact set $\Delta \subset \cD$, we then have that $\Delta \subset \cD^{\eta^n}$ for all sufficiently large $n$. The required convergence in \eqref{eq:defn kappa eta n to v} then follows by taking $\bZ = \bme^n$ in the above. The stated continuity of the value function with respect to the driving rough path is also immediate from the above.
\end{proof}

\begin{remark}
As shown in Theorem~\ref{thm value func solves rough HJ}, the value function $\kappa$ is continuous with respect to the enhanced observation path $\bY$. Although this does not in general imply continuity of the minimum point of the value function, in typical situations where $\kappa$ is convex and particularly unimodal (in the sense of having exactly one global minimum), this continuity with respect to $\bY$ will be inherited by the minimum point of $\kappa$, and hence by the most reasonable posterior and parameter value. Filters based on our approach are thus robust, both with respect to parameter uncertainty, and in the sense of continuity with respect to the (enhanced) observation path (cf.~Crisan et al.~\cite{CrisanDiehlFrizOberhauser2013}).
\end{remark}

\section{Numerical examples}\label{sec numerical examples}

\subsection{Unknown rate matrix}\label{subsec uncertain rate matrix}

As an example, let us take $m = 2$ and $d = 1$, so that the hidden signal $X$ is a 2-state Markov chain taking values in $\cX = \{e_1,e_2\}$, and the observation process $Y$ is 1-dimensional. We shall suppose that the observation vector $h$ is known and constant, but that the rate matrix $A$ depends on an unknown parameter $\lambda$, viz.
$$\cA = \bigg\{\bigg(\hspace{-4pt}\begin{array}{cc}
-\lambda & \nu - \lambda\\
\lambda & -(\nu - \lambda)
\end{array}\hspace{-4pt}\bigg)\, \bigg|\ \lambda \in (0,\nu)\bigg\} \qquad \text{and} \qquad \cH = \bigg\{\bigg(\hspace{-4pt}\begin{array}{c}
-\alpha\\
\alpha
\end{array}\hspace{-4pt}\bigg)\bigg\},$$
for some (known) $\nu, \alpha > 0$. In this case the dimension of the uncertain parameter is $k = 1$, and we adopt the smooth parametrisation:
\begin{equation*}
\R \ni a \, \longmapsto \, \frac{\nu}{1 + e^{-a}} = \lambda \in (0,\nu).
\end{equation*}
The observation process has dynamics:
$$\rd Y_t = \alpha\big(-\Ind_{\{X_t = e_1\}} + \Ind_{\{X_t = e_2\}}\big)\hspace{1pt}\rd t + \rd B_t.$$
Our objective is to determine the most reasonable values for the parameter $\lambda_t$ and for the posterior distribution of the signal $X_t$. We emphasize that here we do not assume any model for the dynamics of $\lambda$; we suppose that we only know that it lies in the interval $(0,\nu)$.

Recalling \eqref{eq:filtereqnStrat} and \eqref{eq:HMMfilterRDE}, we have
\begin{align*}
\rd \pi_{2,s} &= (\lambda_s - \nu\pi_{2,s})\,\rd s + 2\alpha\pi_{2,s}(1 - \pi_{2,s})\,\rd \bY_s\\
&=: b(\pi_{2,s},\gamma_s)\,\rd s + \phi(\pi_{2,s},\gamma_s)\,\rd \bY_s,
\end{align*}
where here $\pi = (\pi_1,\pi_2) = (1 - \pi_2,\pi_2)$ and $\gamma := \log(\lambda/(\nu - \lambda))$.

In this example we actually have $Q_t = S^2 \cong (0,1)$ for all $t \geq 0$, regardless of the realization of $\bY_{[0,t]}$. To see this, note that for $\pi_{2,s} \simeq 0$ we have $\rd \pi_{2,s} \simeq \lambda_s\,\rd s$, and since $\lambda$ takes values in $(0,\nu)$, we can take $\lambda_s$ sufficiently small (by choosing a suitable control $u$) to ensure that $\pi_{2,s} > 0$ for all $s < t$. Similarly, if $\pi_{2,s} \simeq 1$ we have $\rd \pi_{2,s} \simeq (\lambda_s - \nu)\,\rd s$, and we can take $\lambda_s$ sufficiently close to $\nu$ to ensure that $\pi_{2,s} < 1$ for all $s < t$.

We solve the problem in the variables $(q,\gamma)$, where we make the change of variables
$$(0,1) \times (0,\nu) \ni (\pi_2,\lambda) \, \longmapsto \, \bigg(\log\bigg(\frac{\pi_2}{1 - \pi_2}\bigg),\, \log\bigg(\frac{\lambda}{\nu - \lambda}\bigg)\bigg) =: (q,\gamma) \in \R^2.$$

We adopt a simple numerical approximation to solve the HJ equation \eqref{eq:HMMroughHJB}--\eqref{eq:HMMroughHJBinitcond}. We first recall that the most reasonable posteriors and parameter values are attained at the minimum point of the value function $\kappa$. To obtain an efficient scheme, we linearize the controlled dynamics around this minimum point (in the variables $(q,\gamma)$) and, taking the penalty functions $f$, $g$ to be quadratic, compute the resulting linear-quadratic optimization problem, linearizing around the new minimum point after each time step.

Recalling \eqref{eq:defn f psi}, we have
\begin{align}
\psi(\pi,\gamma) &= -h^{\hspace{-1pt}\top}\pi = \alpha(1 - 2\pi_2)\label{eq:numerical psi}\\
f(\pi,\gamma,u) &= \frf(\pi,\gamma,u) + \frac{1}{2}h^{\hspace{-1pt}\top}\hspace{-1pt}H\pi = \frf(\pi,\gamma,u) + \frac{\alpha^2}{2}.\label{eq:numerical f}
\end{align}
We choose the prior penalty $\frf$ to include the term $\frac{1}{2}|h^{\hspace{-1pt}\top}\pi|^2 = \frac{\alpha^2}{2}(2\pi_2 - 1)^2$, and introduce quadratic penalties to ensure that Assumption~\ref{assumptions f g} holds, viz.
\begin{align}
\tilde{f}(q,\gamma,u) &= \frac{\tau}{2}(q - \hat{q})^2 + \frac{\delta}{2}(\gamma - \hat{\gamma})^2 + \frac{1}{2\epsilon}u^2 + \frac{\alpha^2}{2}(2\pi_2 - 1)^2 + \frac{\alpha^2}{2},\label{eq:numerical tilde f}\\
\tilde{g}(q_0,\gamma_0) &= \frac{1}{2}\big(|q_0|^2 + |\gamma_0|^2\big)\label{eq:numerical tilde g}
\end{align}
for some constants $\tau, \delta, \epsilon > 0$.

We take $\nu = 1$, $\alpha = 1$, $\tau = \delta = 5 \times 10^{-2}$, $\epsilon = 10^{-3}$, and simulate the signal $X$ and observation process $Y$ with the `true' parameter $\lambda_t = 0.1 \times \Ind_{\{t \in [0,500)\}} + 0.7 \times \Ind_{\{t \in [500,1000)\}} + 0.3 \times \Ind_{\{t \in [1000,1500)\}} + 0.9 \times \Ind_{\{t \in [1500,2000]\}}$. We then compute the solution to the HJ equation using a standard Euler scheme with time steps of size $2 \times 10^{-3}$.

The learned value of $\gamma_t$ is given by \eqref{eq:argmin a kappa}, and the corresponding value of $\lambda_t$ is then obtained by reversing the above change of variables. The minimum point of the value function, which at each time $t$ corresponds to the most reasonable parameter value $\lambda_t$ given the observations up to time $t$, is compared with the true value of $\lambda_t$ in Figure~\ref{fig:lam plot}.

Similarly, reversing the above change of variables also gives the most reasonable posterior value, as in \eqref{eq:argmin x kappa}. This is then compared with the filter corresponding to the true parameter $\lambda$ in Figure~\ref{fig:post plot}.

\begin{figure}[!ht]
\centering
\input{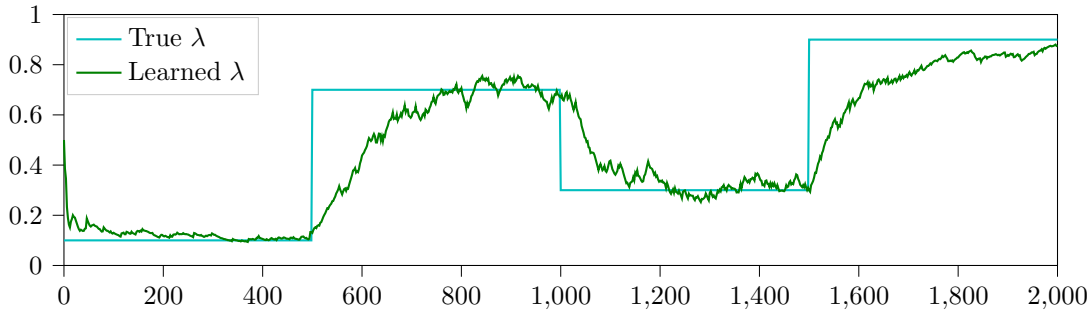}
\caption{Comparison of the most reasonable value of $\lambda_t$ with the true value.}
\label{fig:lam plot}
\end{figure}

\begin{figure}[!ht]
\centering
\input{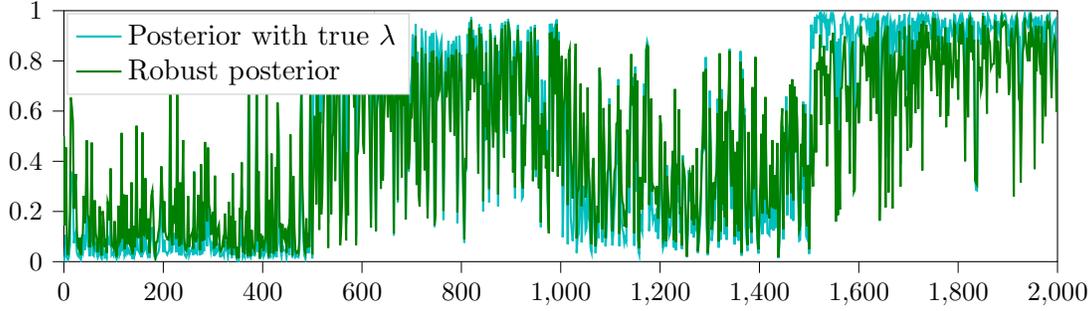}
\caption{Comparison of the most reasonable posterior with the posterior obtained using the true parameter $\lambda$.}
\label{fig:post plot}
\end{figure}

\subsection{Unknown observation matrix}

We now suppose that the rate matrix $A$ is known, but that the observation matrix $h$ depends on an unknown parameter $\alpha$, viz.
$$\cA = \bigg\{\bigg(\hspace{-4pt}\begin{array}{cc}
-\lambda & \mu\\
\lambda & -\mu
\end{array}\hspace{-4pt}\bigg)\bigg\} \qquad \text{and} \qquad \cH = \bigg\{\bigg(\hspace{-4pt}\begin{array}{c}
-\alpha\\
\alpha
\end{array}\hspace{-4pt}\bigg)\, \bigg|\ \alpha \in (\nu_1,\nu_2)\bigg\},$$
for some (known) $\lambda, \mu > 0$ and $0 \leq \nu_1 < \nu_2$. We adopt the parametrisation:
\begin{equation*}
\R \ni a \, \longmapsto \, \frac{\nu_2 + \nu_1e^{-a}}{1 + e^{-a}} = \alpha \in (\nu_1,\nu_2).
\end{equation*}
The observation process has dynamics:
\begin{equation*}
\rd Y_t = \alpha_t\big(-\Ind_{\{X_t = e_1\}} + \Ind_{\{X_t = e_2\}}\big)\hspace{1pt}\rd t + \rd B_t.
\end{equation*}
Note that if the jump rates $\lambda, \mu$ are sufficiently large then the filtering of the signal $X$ becomes an intractable task, as the observation time between jumps is too short to detect individual jumps. However, even in this case the problem of learning the unknown parameter $\alpha$ remains. We stress again that we do not assume any model for the dynamics of $\alpha$, assuming only that it takes values in the interval $(\nu_1,\nu_2)$.

We have
\begin{align*}
\rd \pi_{2,s} &= \big(\lambda(1 - \pi_{2,s}) - \mu\pi_{2,s}\big)\hspace{1pt}\rd s + 2\alpha_s\pi_{2,s}(1 - \pi_{2,s})\,\rd \bY_s\\
&=: b(\pi_{2,s},\gamma_s)\,\rd s + \phi(\pi_{2,s},\gamma_s)\,\rd \bY_s,
\end{align*}
where $\pi = (\pi_1,\pi_2) = (1 - \pi_2,\pi_2)$. As in the previous example, we solve the problem in the variables $(q,\gamma)$, where we make the change of variables
$$(0,1) \times (\nu_1,\nu_2) \ni (\pi_2,\alpha) \, \longmapsto \, \bigg(\log\bigg(\frac{\pi_2}{1 - \pi_2}\bigg),\, \log\bigg(\frac{\alpha - \nu_1}{\nu_2 - \alpha}\bigg)\bigg) =: (q,\gamma) \in \R^2.$$

As in the previous example, we adopt the penalty functions \eqref{eq:numerical psi}--\eqref{eq:numerical tilde g}. We take $\nu_1 = 0.2$, $\nu_2 = 1.8$, $\lambda = \mu = 5 \times 10^{-2}$, $\tau = \delta = 10^{-2}$, $\epsilon = 10^{-3}$, and simulate the signal $X$ and observation process $Y$ with the `true' parameter $\alpha = 0.4 \times \Ind_{\{t \in [0,500)\}} + 1.3 \times \Ind_{\{t \in [500,1000)\}} + 0.7 \times \Ind_{\{t \in [1000,1500)\}} + 1.6 \times \Ind_{\{t \in [1500,2000]\}}$. We then compute the solution to the HJ equation, using a linear-quadratic approximation scheme as in the previous example, with time steps of size $2 \times 10^{-3}$.

As noted above, due to the high jump rates of the signal, obtaining an accurate posterior value in this case is unfeasible. Despite this, it turns out that we can still obtain meaningful inference for the unknown parameter $\alpha$. The minimum point of the value function corresponds to the most reasonable value of $\alpha_t$, and is compared with the true parameter in Figure~\ref{fig:alpha plot}.

\begin{figure}[!ht]
\centering
\input{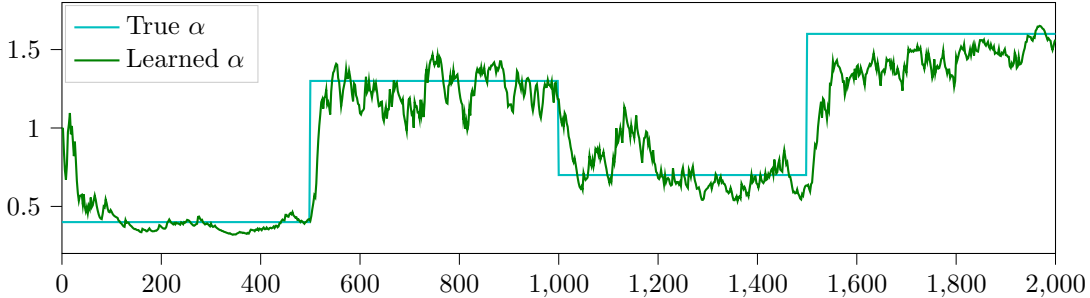}
\caption{Comparison of the most reasonable value of $\alpha_t$ with the true value.}
\label{fig:alpha plot}
\end{figure}

\section{Concluding remarks}\label{sec conclusion}

\begin{remark}
Since our evaluation of the `reasonability' of parameters is inherently non-probabilistic, our framework can be immediately extended to include non-Markovian models. Suppose for instance that a parameter $\gamma$ were, at each time $t$, a function of both $t$ and of the entire paths of the signal $X$ and observation process $Y$ up to time $t$, i.e.~$\gamma = \gamma(t,X_{\cdot \wedge t},Y_{\cdot \wedge t})$. Given a realization $\omega \in \Omega$, this then defines a path
$$t \, \longmapsto \, \gamma_t := \gamma(t,X_{\cdot \wedge t}(\omega),Y_{\cdot \wedge t}(\omega))$$
which may then be `learned' like any other deterministic path.
\end{remark}

\begin{remark}
In the current work we consider observations corrupted by Gaussian (Brownian) noise. More general observation noise with jumps could also be included in our theory, but would require further analysis. We expect that the recent results on c\`adl\`ag rough paths in Chevyrev and Friz \cite{ChevyrevFriz2019} or Friz and Zhang \cite{FrizZhang2018} could provide a suitable basis for the corresponding pathwise formulation.
\end{remark}

\begin{remark}
The setting of this paper is analogous to the dynamic generator, DR-expectation framework of Cohen \cite{Cohen2020}. One may alternatively consider the static generator case, where the unknown parameters are assumed to be constant in time. In this case formal calculations suggest that the value function $\kappa$ should satisfy an equation of the form
\begin{equation*}
\rd\kappa + \big(b\cdot\nabla_x\kappa - f\big)\hspace{0.8pt}\rd t + \big(\phi\cdot\nabla_x\kappa - \psi\big)\hspace{0.8pt}\rd \bY_t = 0
\end{equation*}
with $\kappa(0,\cdot\hspace{1pt},\cdot) = g$, which in principle must be solved separately for each $a \in \R^k$. This appears to be practically inconvenient, and it remains an open problem to derive some finite approximation or alternative approach which yields a tractable solution.

On the other hand, at least formally, the constant-parameter case is recovered in the present framework by imposing an infinite cost for non-zero controls $\dot{\gamma} = u$.
\end{remark}

\appendix

\section{Rough path estimates}\label{AppendixRoughPaths}

In this section we exhibit a direct approach to obtaining solutions to the RDE \eqref{eq:HMMRDE} in the controlled path setting of Gubinelli, based on the fixed point argument of Friz and Zhang \cite{FrizZhang2018}.

\subsection{Preliminary lemmas}

\begin{lemma}\label{lemmapvarpartitionbound}
For some $n \geq 1$, let $0 = t_0 < t_1 < \ldots < t_{n-1} < t_n = T$, be a partition of the interval $[0,T]$. Then, for any path $X$ and any $p \in [1,\infty)$, one has that
$$\|X\|_{\pvarzT} \leq n^{\frac{p - 1}{p}}\bigg(\sum_{i=1}^n\|X\|_{p,[t_{i-1},t_i]}^p\bigg)^{\hspace{-2pt}\frac{1}{p}}.$$
\end{lemma}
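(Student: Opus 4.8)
The plan is to reduce the statement to a single, arbitrary partition of $[0,T]$, refine that partition by the given points $t_0,\ldots,t_n$, and then apply the discrete power-mean (Hölder) inequality interval by interval \emph{before} regrouping the pieces.

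First I would fix an arbitrary finite partition $\cP$ of $[0,T]$ and pass to the common refinement $\cP^* := \cP \cup \{t_0,t_1,\ldots,t_n\}$. Each interval $[s,t] \in \cP$ is then split by $\cP^*$ into some number $k_{[s,t]}$ of consecutive sub-intervals, each contained in exactly one of the intervals $[t_{i-1},t_i]$; since $[s,t]$ can meet at most $n$ of the latter, $k_{[s,t]} \le n$. Writing $X_{s,t}$ as the sum of the increments of $X$ over these sub-intervals $[a,b]$, the triangle inequality followed by Hölder's inequality (equivalently, convexity of $r \mapsto r^p$ together with $\big(\tfrac1k\sum x_\ell\big)^p \le \tfrac1k\sum x_\ell^p$) gives
$$|X_{s,t}|^p \le k_{[s,t]}^{\,p-1}\sum_{[a,b]\subseteq[s,t]} |X_{a,b}|^p \le n^{p-1}\sum_{[a,b]\subseteq[s,t]} |X_{a,b}|^p,$$
where the sums run over the intervals $[a,b] \in \cP^*$ contained in $[s,t]$.

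Next I would sum this over all $[s,t] \in \cP$ and reorganise the resulting double sum over $\cP^*$ according to which interval $[t_{i-1},t_i]$ each sub-interval lies in. Since $\{t_0,\ldots,t_n\} \subseteq \cP^*$, the sub-intervals of $\cP^*$ contained in a fixed $[t_{i-1},t_i]$ form a genuine partition of $[t_{i-1},t_i]$, so by the definition of $p$-variation their total contribution is at most $\|X\|_{p,[t_{i-1},t_i]}^p$. This yields
$$\sum_{[s,t]\in\cP}|X_{s,t}|^p \le n^{p-1}\sum_{i=1}^n \|X\|_{p,[t_{i-1},t_i]}^p,$$
and taking the supremum over all partitions $\cP$ of $[0,T]$ followed by the $p$-th root gives the claim. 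The only point requiring any care — I would flag it explicitly — is that one cannot argue "refining a partition increases $\sum|X_{s,t}|^p$", which is false for $p \ge 1$ because $(a+b)^p \ge a^p+b^p$; the factor $n^{\frac{p-1}{p}}$ is exactly the loss incurred in controlling a single increment by the $\ell^p$-norm of its at most $n$ constituent sub-increments, and everything else is routine bookkeeping.
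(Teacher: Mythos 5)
Your argument is correct and is essentially the paper's own proof: both pass to the common refinement of an arbitrary partition with $\{t_0,\ldots,t_n\}$, control each original increment by Hölder's inequality applied to its at most $n$ constituent sub-increments (producing the factor $n^{p-1}$), and then regroup the refined sum by the intervals $[t_{i-1},t_i]$ before taking the supremum. Your closing remark correctly identifies why the Hölder loss is unavoidable; no gaps.
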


\begin{proof}
Let $0 = s_0 < s_1 < \ldots < s_{N-1} < s_N = T$ be another partition of the interval $[0,T]$. We can label the union of these two partitions in two different ways as follows. We can either write
\begin{align*}
s_{j-1} = t^j_0 < t^j_1 < \ldots < t^j_{n_j} = s_j \quad &\text{for each} \quad j = 1, \ldots, N,\\
\text{or} \quad t_{i-1} = s^i_0 < s^i_1 < \ldots < s^i_{N_i} = t_i \quad &\text{for each} \quad i = 1, \ldots, n,
\end{align*}
where in particular $n_j \leq n$ for every $j$. For every $j = 1,\ldots,N$, we have, by H\"older's inequality,
$$\sum_{i=1}^{n_j}\big|X_{t^j_i} - X_{t^j_{i-1}}\hspace{-1pt}\big| \leq n_j^{\frac{p - 1}{p}} \bigg(\sum_{i=1}^{n_j}\big|X_{t^j_i} - X_{t^j_{i-1}}\hspace{-1pt}\big|^p\bigg)^{\hspace{-2pt}\frac{1}{p}} \leq n^{\frac{p - 1}{p}} \bigg(\sum_{i=1}^{n_j}\big|X_{t^j_i} - X_{t^j_{i-1}}\hspace{-1pt}\big|^p\bigg)^{\hspace{-2pt}\frac{1}{p}}.$$
Then
\begin{align*}
\sum_{j=1}^N\big|X_{s_j} - X_{s_{j-1}}\hspace{-1pt}\big|^p &\leq \sum_{j=1}^N\bigg(\sum_{i=1}^{n_j}\big|X_{t^j_i} - X_{t^j_{i-1}}\hspace{-1pt}\big|\bigg)^{\hspace{-2pt}p} \leq n^{p - 1}\sum_{j=1}^N\sum_{i=1}^{n_j}\big|X_{t^j_i} - X_{t^j_{i-1}}\hspace{-1pt}\big|^p\\
&= n^{p - 1}\sum_{i=1}^{n}\sum_{j=1}^{N_i}\big|X_{s^i_j} - X_{s^i_{j-1}}\hspace{-1pt}\big|^p \leq n^{p - 1}\sum_{i=1}^{n}\|X\|_{p,[t_{i-1},t_i]}^p.
\end{align*}
The result then follows from taking the supremum over all possible partitions $s_0 < s_1 < \ldots < s_N$ of the interval $[0,T]$.
\end{proof}

\begin{lemma}\label{lemmainvarianceHMM}
Let $\phi \in C^3_b$, $\gamma \in \Cptvar$, and let $\bY = (Y,\Y) \in \rp$ with $\ver{\bY}_p \leq L$ for some $L > 0$. For any controlled path $(X,X') \in \crp$, we have that
$$\bigg(\int_0^\cdot\phi(X_r,\gamma_r)\,\rd\bY_r,\, \phi(X,\gamma)\bigg) \in \crp,$$
and the estimates
\begin{align*}
\|\phi(X,\gamma)\|_p &\leq C\Big(\big(|X'_0| + \|X'\|_p\big)\|Y\|_p + \|R^X\|_{\frac{p}{2}} + \|\gamma\|_{\frac{p}{2}}\Big),\\
\big\|R^{\int_0^\cdot\phi(X_r,\gamma_r)\,\rd\bY_r}\big\|_{\frac{p}{2}} &\leq C\Big(1 + |X'_0| + \|X'\|_p + \|R^X\|_{\frac{p}{2}} + \|\gamma\|_{\frac{p}{2}}\Big)^{\hspace{-2pt}2}\ver{\bY}_p,
\end{align*}
where the constant $C$ depends only on $\phi, p$ and $L$.
\end{lemma}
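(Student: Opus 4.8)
The plan is to prove Lemma~\ref{lemmainvarianceHMM} via the standard stability estimates for rough integration, treating the parameter path $\gamma$ as contributing an additional source of regularity. First I would record that $(X,\gamma)$, as a path into $\R^m \times \R^k$, is itself a controlled rough path with respect to $\bY$, with Gubinelli derivative $(X',0)$ (since $\gamma$ has no $\bY$-component) and remainder $(R^X,\gamma_{s,t})$; note $\|\gamma\|_{\frac{p}{2}}$ bounds the relevant remainder contribution because $\gamma \in \Cptvar$. The composition with $\phi \in C^3_b$ then yields a controlled rough path $(\phi(X,\gamma), D\phi(X,\gamma)(X',0))$, i.e.\ with Gubinelli derivative $\phi(X,\gamma)' = D_x\phi(X,\gamma)X'$, by the usual chain rule for controlled paths (e.g.\ Lemma~7.3 in Friz--Hairer, adapted to include the extra variable). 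The stated bound on $\|\phi(X,\gamma)\|_p$ follows by writing $\phi(X,\gamma)_{s,t} = \phi(X,\gamma)'_s Y_{s,t} + R^{\phi(X,\gamma)}_{s,t}$ and estimating each piece: the linear term using $\|X'\|_\infty \leq |X'_0| + \|X'\|_p$ and $\|Y\|_p$, and the remainder via the Taylor expansion of $\phi$, picking up $\|R^X\|_{\frac p2}$, $\|\gamma\|_{\frac p2}$, and quadratic terms $\|X\|_p^2$, $\|\gamma\|_p^2$ which are in turn controlled by $\|Y\|_p(|X'_0|+\|X'\|_p) + \|R^X\|_{\frac p2}$ and $\|\gamma\|_{\frac p2}$ respectively (all up to the $C^3_b$-norm of $\phi$ and the bound $L$).

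Next I would apply Proposition~\ref{HMMproproughintegral} to the controlled integrand $(\phi(X,\gamma),\phi(X,\gamma)')$ to obtain the rough integral $Z := \int_0^\cdot \phi(X_r,\gamma_r)\,\rd\bY_r$, and identify its Gubinelli derivative as $Z' = \phi(X,\gamma)$, with remainder $R^Z_{s,t} = Z_{s,t} - \phi(X_s,\gamma_s)Y_{s,t}$. The key sewing-lemma estimate for rough integrals (the quantitative version of Proposition~\ref{HMMproproughintegral}, as in Friz--Zhang or Friz--Hairer Thm~4.10) gives
\begin{equation*}
\|R^Z\|_{\frac p2} \lesssim \|\phi(X,\gamma)'\|_\infty \|\Y\|_{\frac p2} + \big(\|\phi(X,\gamma)\|_p + \|\phi(X,\gamma)'\|_p\big)\|Y\|_p + \|R^{\phi(X,\gamma)}\|_{\frac p2}\|Y\|_p,
\end{equation*}
and substituting the bounds from the first step — together with $\|\phi(X,\gamma)'\|_\infty \lesssim |X'_0| + \|X'\|_p$ and the analogous control of $\|\phi(X,\gamma)'\|_p$ and $\|R^{\phi(X,\gamma)}\|_{\frac p2}$ — collapses everything into the claimed form $C(1 + |X'_0| + \|X'\|_p + \|R^X\|_{\frac p2} + \|\gamma\|_{\frac p2})^2 \ver{\bY}_p$. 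The ``$1+$'' and the square appear because the remainder of the integrand is itself quadratic in the controlled-path data, and because the coefficient $\|Y\|_p$ multiplying the first-order term and $\|\Y\|_{\frac p2}$ multiplying the zeroth-order term are both dominated by $\ver{\bY}_p \leq L$.

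The main obstacle I expect is purely bookkeeping: keeping careful track of exactly where the powers of the controlled-path norms enter, so that the final estimate genuinely has the advertised homogeneity (quadratic in the bracket, linear in $\ver{\bY}_p$) rather than something weaker. In particular, one must be attentive to the fact that terms like $\|X\|_p^2$ arise from the second-order Taylor remainder of $\phi$ and need to be re-expressed using $X_{s,t} = X'_sY_{s,t} + R^X_{s,t}$ so that they contribute $(|X'_0|+\|X'\|_p)^2\|Y\|_p^2 + \|R^X\|_{\frac p2}^2$, and then absorbed into the bracket squared times $\ver{\bY}_p$ using $\|Y\|_p \leq L$; similarly the cross terms between $X$ and $\gamma$ must be handled by Young's inequality or direct factoring. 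Since $\phi \in C^3_b$ (rather than merely $C^2$), the third derivative bound is available to control the Taylor remainders in the sewing estimate uniformly, which is what makes the constant $C$ depend only on $\phi$, $p$ and $L$ as claimed. None of this requires new ideas beyond the standard rough integration toolkit; the novelty relative to the parameter-free case is only that $\gamma$ enters linearly as an extra controlled coordinate with vanishing Gubinelli derivative, contributing its $\frac p2$-variation to every remainder.
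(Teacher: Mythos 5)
Your proposal is correct and follows the same route as the paper: the key observation in both is that $\gamma$ can be absorbed as an extra controlled coordinate with Gubinelli derivative $\gamma'=0$ and remainder $R^\gamma_{s,t}=\gamma_{s,t}$, after which the composition and integration bounds are the standard ones. The only difference is that the paper simply cites Lemma~3.6 of \cite{FrizZhang2018} for those bounds and substitutes $\gamma'=0$, $R^\gamma_{s,t}=\gamma_{s,t}$, whereas you propose to re-derive them via Taylor expansion and the sewing lemma — which is exactly what that cited lemma packages.
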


\begin{proof}
Note that $\gamma \in \Cptvar$ immediately satisfies \eqref{eq:Gub deriv} with $\gamma'_s = 0$ and $R^\gamma_{s,t} = \gamma_{s,t}$. It is then straightforward to see that the composition $\phi(X,\gamma)$ is then itself a controlled path with Gubinelli derivative $\phi(X,\gamma)' = D_x\phi(X,\gamma)X'$, where $D_x\phi$ denotes the Fr\'echet derivative of $\phi$ in its first argument. From Lemma~3.6 in \cite{FrizZhang2018}, we immediately have
\begin{align*}
\|\phi(X,\gamma)\|_p &\leq C\Big(\big(|X'_0| + |\gamma'_0| + \|X'\|_p + \|\gamma'\|_p\big)\|Y\|_p + \|R^X\|_{\frac{p}{2}} + \|R^\gamma\|_{\frac{p}{2}}\Big),\\
\big\|R^{\int_0^\cdot\phi(X_r,\gamma_r)\,\rd\bY_r}\big\|_{\frac{p}{2}} &\leq C\Big(1 + |X'_0| + |\gamma'_0| + \|X'\|_p + \|\gamma'\|_p + \|R^X\|_{\frac{p}{2}} + \|R^\gamma\|_{\frac{p}{2}}\Big)^{\hspace{-2pt}2}\ver{\bY}_p,
\end{align*}
for some constant $C$ depending only on $\phi$, $p$ and $\|Y\|_p$. Substituting $\gamma'_s = 0$ and $R^\gamma_{s,t} = \gamma_{s,t}$ into the above, we obtain the desired estimates.
\end{proof}

The following lemma follows similarly from \cite[Lemma~3.7]{FrizZhang2018}.

\begin{lemma}\label{lemmacontractionHMM}
Let $\phi \in C^3_b$, $\gamma, \tg \in \Cptvar$, and let $\bY = (Y,\Y)$ and $\tbY = (\tY,\teY)$ be rough paths with $\ver{\bY}_p, \ver{\tbY}_p \leq L$ for some $L > 0$. Suppose that $(X,X') \in \crp$ and $(\tX,\tX') \in \crpt$ satisfy $|X'_0| + \|X'\|_p + \|R^X\|_{\frac{p}{2}} + \|\gamma\|_{\frac{p}{2}} \leq M$ and $|\tX'_0| + \|\tX'\|_p + \|R^{\tX}\|_{\frac{p}{2}} + \|\tg\|_{\frac{p}{2}} \leq M$ for some $M > 0$. Then we have the estimates
\begin{align*}
\big\|\phi(X,\gamma) - \phi(\tX,\tg)\big\|_p \leq C\Big(&|X_0 - \tX_0| + |X'_0 - \tX'_0| + \|X' - \tX'\|_p\|Y\|_p\\
&+ \|R^X - R^{\tX}\|_{\frac{p}{2}} + |\gamma_0 - \tg_0| + \|\gamma - \tg\|_{\frac{p}{2}} + \|Y - \tY\|_p\Big),
\end{align*}
\begin{align*}
\big\|&R^{\int_0^\cdot\phi(X_r,\gamma_r)\,\rd\bY_r} - R^{\int_0^\cdot\phi(\tX_r,\tg_r)\,\rd\tbY_r}\big\|_{\frac{p}{2}}\\
&\leq C\Big(\|\bY;\tbY\|_p + \Big(|X_0 - \tX_0| + |X'_0 - \tX'_0| + \|X' - \tX'\|_p\\
&\hspace{100pt} + \|R^X - R^{\tX}\|_{\frac{p}{2}} + |\gamma_0 - \tg_0| + \|\gamma - \tg\|_{\frac{p}{2}}\Big)\ver{\bY}_p\Big),
\end{align*}
where the constant $C$ depends on $\phi, p, L$ and $M$.
\end{lemma}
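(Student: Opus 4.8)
The plan is to prove Lemma~\ref{lemmacontractionHMM} by mirroring the structure of the proof of Lemma~\ref{lemmainvarianceHMM}, but now tracking differences of two controlled paths, and to deduce it from the corresponding statement \cite[Lemma~3.7]{FrizZhang2018} by the same substitution trick ($\gamma'_s = 0$, $R^\gamma_{s,t} = \gamma_{s,t}$, and likewise for $\tg$). The only genuine task is to check that the composition $(X,\gamma) \mapsto \phi(X,\gamma)$ fits into the framework of \cite[Lemma~3.7]{FrizZhang2018}, which concerns compositions of a single controlled path with a $C^3_b$ function; here the role of the "controlled path being composed" is played by the joined pair $Z := (X,\gamma)$ with Gubinelli derivative $Z' = (X',0)$ and remainder $R^Z = (R^X,\gamma_{\cdot,\cdot})$, so that $\phi(X,\gamma) = \phi(Z)$ with $\phi$ viewed as a function of the single $(\R^m\times\R^k)$-valued argument, and $\phi(X,\gamma)' = D\phi(Z)Z' = D_x\phi(X,\gamma)X'$.

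First I would record that $\gamma \in \Cptvar$ trivially satisfies \eqref{eq:Gub deriv} with $\gamma'_s = 0$ and $R^\gamma_{s,t} = \gamma_{s,t}$, hence $\|\gamma'\|_p = 0$ and $\|R^\gamma\|_{\frac{p}{2}} = \|\gamma\|_{\frac{p}{2}}$, and similarly for $\tg$. Then the hypotheses $|X'_0| + \|X'\|_p + \|R^X\|_{\frac{p}{2}} + \|\gamma\|_{\frac{p}{2}} \leq M$ and its tilde-analogue are exactly the bounds $|Z'_0| + \|Z'\|_p + \|R^Z\|_{\frac{p}{2}} \leq M$ (up to harmless constants depending on the norm chosen on the product space) required to invoke \cite[Lemma~3.7]{FrizZhang2018} for the composed paths $Z$ and $\tilde Z$. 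Applying that lemma yields estimates for $\|\phi(Z) - \phi(\tilde Z)\|_p$ and for $\|R^{\int \phi(Z)\,\rd\bY} - R^{\int \phi(\tilde Z)\,\rd\tbY}\|_{\frac{p}{2}}$ in terms of $|Z_0 - \tilde Z_0|$, $|Z'_0 - \tilde Z'_0|$, $\|Z' - \tilde Z'\|_p$, $\|R^Z - R^{\tilde Z}\|_{\frac{p}{2}}$, $\|Y - \tY\|_p$ and $\|\bY;\tbY\|_p$. Substituting $Z_0 - \tilde Z_0 = (X_0 - \tX_0,\gamma_0 - \tg_0)$, $Z'_0 - \tilde Z'_0 = (X'_0 - \tX'_0, 0)$, $Z' - \tilde Z' = (X' - \tX', 0)$ and $R^Z - R^{\tilde Z} = (R^X - R^{\tX}, \gamma_{\cdot,\cdot} - \tg_{\cdot,\cdot})$, and noting $\|\gamma_{\cdot,\cdot} - \tg_{\cdot,\cdot}\|_{\frac{p}{2}} = \|\gamma - \tg\|_{\frac{p}{2}}$, reproduces precisely the two displayed inequalities in the statement, with the constant $C$ inheriting its dependence on $\phi, p, L, M$ from \cite[Lemma~3.7]{FrizZhang2018}.

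I do not anticipate a serious obstacle here, as the argument is essentially a bookkeeping reduction identical in spirit to the proof of Lemma~\ref{lemmainvarianceHMM}; the one point requiring a word of care is the observation that enlarging the "controlled path" from $X$ to the pair $(X,\gamma)$ does not change the regularity class (it remains a $p$-controlled rough path with respect to $\bY$, since $\gamma$ contributes only to the remainder with $\frac{p}{2}$-variation), and that the $C^3_b$ norm of $\phi$ as a function on the product space is the relevant constant. One should also remark, as the excerpt already does after Lemma~\ref{lemmainvarianceHMM}, that $\|Y - \tY\|_p \leq \|\bY;\tbY\|_p$ so that the first estimate could if desired be written with $\|\bY;\tbY\|_p$ in place of $\|Y - \tY\|_p$, but this is cosmetic. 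With these remarks in place the proof is complete.
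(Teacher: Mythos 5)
Your proposal is correct and coincides with the paper's approach: the paper simply states that the lemma ``follows similarly from [Lemma~3.7]{FrizZhang2018}'', meaning exactly the reduction you carry out, namely viewing $(X,\gamma)$ as a single controlled path with Gubinelli derivative $(X',0)$ and remainder $(R^X,\gamma_{\cdot,\cdot})$ and substituting into the Friz--Zhang stability estimate, just as the paper does explicitly with Lemma~3.6 in the proof of Lemma~\ref{lemmainvarianceHMM}. Your bookkeeping of the substituted quantities reproduces the stated bounds, so nothing is missing.
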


\subsection{Proof of Theorem~\ref{theoremHMMRDE}}

\begin{proof}[Proof of Theorem~\ref{theoremHMMRDE}]
\textbf{Step 1.}
We define the map $\cM_t \colon \crp([0,t];\R^m) \to \crp([0,t];\R^m)$ by
$$\cM_t(X,X') = \bigg(x + \int_0^\cdot b(X_r,\gamma_r)\,\rd r + \int_0^\cdot\phi(X_r,\gamma_r)\,\rd\bY_r,\ \phi(X,\gamma)\bigg),$$
and, for $\delta \geq 1$, introduce the subset of controlled paths
$$\cB^{(\delta)}_t := \Big\{(X,X') \in \crp([0,t];\R^m) : (X_0,X'_0) = (x,\phi(x,\gamma_0)),\ \|X,X'\|_{Y,p}^{(\delta)} \leq 1\Big\},$$
where
$$\|X,X'\|_{Y,p}^{(\delta)} := \|X'\|_p + \delta\|R^X\|_{\frac{p}{2}}.$$
Let $L > 0$ such that $\ver{\bY}_p \leq L$. Provided that
\begin{equation}\label{eq:gammaleq1}
\|\gamma\|_{\ptvarzt} \leq 1,
\end{equation}
by Lemma~\ref{lemmainvarianceHMM}, for any $(X,X') \in \cB^{(\delta)}_t$, we have
\begin{align*}
\|&\cM_t(X,X')\|_{Y,p}^{(\delta)}\\
&\leq \|\phi(X,\gamma)\|_{\pvarzt} + \delta\big\|R^{\int_0^\cdot\phi(X_r,\gamma_r)\,\rd\bY_r}\big\|_{\ptvarzt} + \delta\bigg\|\int_0^\cdot b(X_r,\gamma_r)\,\rd r\bigg\|_{\ptvarzt}\\
&\leq C_1\bigg(\frac{1}{\delta} + \delta\big(\ver{\bY}_{\pvarzt} + t + \|\gamma\|_{\ptvarzt}\big)\bigg)
\end{align*}
for some constant $C_1 \geq \frac{1}{2}$ depending only on $b, \phi, p$ and $L$. Let $\delta = \delta_1 := 2C_1 \geq 1$, so that
$$\|\cM_t(X,X')\|_{Y,p}^{(\delta_1)} \leq \frac{1}{2} + 2C_1^2\big(\ver{\bY}_{\pvarzt} + t + \|\gamma\|_{\ptvarzt}\big).$$
By taking $t = t_1$ sufficiently small such that
\begin{equation}\label{eq:smalltimet1}
\ver{\bY}_{p,[0,t_1]} + t_1 + \|\gamma\|_{\frac{p}{2},[0,t_1]} \leq \frac{1}{4C_1^2},
\end{equation}
which in particular ensures that \eqref{eq:gammaleq1} holds, we obtain $\|\cM_{t_1}(X,X')\|_{Y,p}^{(\delta_1)} \leq 1$. That is, $\cB^{(\delta_1)}_{t_1}$ is invariant under $\cM_{t_1}$.

\textbf{Step 2.}
Let $(X,X'), (\tX,\tX') \in \cB^{(\delta_1)}_t$ with $\delta_1$ as above and some $t \leq t_1$. Note that $|X'_0| + \|X'\|_p + \|R^X\|_{\frac{p}{2}} + \|\gamma\|_{\frac{p}{2}} \leq |\phi(x,\gamma_0)| + 2$ and similarly for $\tX$, so that the hypotheses of Lemma~\ref{lemmacontractionHMM} are satisfied. For any (new) $\delta \geq 1$, we then have
\begin{align*}
\big\|&\cM_t(X,X') - \cM_t(\tX,\tX')\big\|_{Y,p}^{(\delta)}\\
&\leq \big\|\phi(X,\gamma) - \phi(\tX,\gamma)\big\|_p + \delta\big\|R^{\int_0^\cdot\phi(X_r,\gamma_r)\,\rd\bY_r} - R^{\int_0^\cdot\phi(\tX_r,\gamma_r)\,\rd\bY_r}\big\|_{\frac{p}{2}}\\
&\qquad + \delta\bigg\|\int_0^\cdot b(X_r,\gamma_r)\,\rd r - \int_0^\cdot b(\tX_r,\gamma_r)\,\rd r\bigg\|_{\frac{p}{2}}\\
&\leq C_2\Big(\|R^X - R^{\tX}\|_{\frac{p}{2}} + \delta\Big(\|X' - \tX'\|_p + \|R^X - R^{\tX}\|_{\frac{p}{2}}\Big)\big(\ver{\bY}_p + t\big)\Big),
\end{align*}
for some constant $C_2 > \frac{1}{2}$ depending only on $b, \phi, p$ and $L$, where in bounding the drift terms we used the fact\footnote{This follows by taking the difference of the equalities $X_{0,t} = X'_0Y_{0,t} + R^X_{0,t}$ and $\tX_{0,t} = \tX'_0Y_{0,t} + R^{\tX}_{0,t}$.} that $\|X - \tX\|_\infty \leq \|R^X - R^{\tX}\|_{\frac{p}{2}}$. Let $\delta = \delta_2 := 2C_2 > 1$, so that
\begin{align*}
\big\|\cM_t(X,X') - \cM_t(\tX,\tX'&)\big\|_{Y,p}^{(\delta_2)} \leq \frac{\delta_2}{2}\|R^X - R^{\tX}\|_{\frac{p}{2}}\\
&+ 2C_2^2\Big(\|X' - \tX'\|_p + \|R^X - R^{\tX}\|_{\frac{p}{2}}\Big)\big(\ver{\bY}_{\pvarzt} + t\big).
\end{align*}
Taking $t = t_2 \leq t_1$ sufficiently small such that
\begin{equation}\label{eq:smalltimet2}
\ver{\bY}_{p,[0,t_2]} + t_2 \leq \frac{1}{4C_2^2},
\end{equation}
we obtain
\begin{align*}
\big\|\cM_{t_2}(X,X') - \cM_{t_2}(\tX,\tX')\big\|_{Y,p}^{(\delta_2)} &\leq \frac{1}{2}\|X' - \tX'\|_p + \frac{\delta_2 + 1}{2}\|R^X - R^{\tX}\|_{\frac{p}{2}}\\
&\leq \frac{\delta_2 + 1}{2\delta_2}\big\|(X,X') - (\tX,\tX')\big\|_{Y,p}^{(\delta_2)},
\end{align*}
and we have thus established that $\cM_{t_2}$ is a contraction on $\cB^{(\delta_1)}_{t_2}$ with respect to the norm $\|\cdot\|_{Y,p}^{(\delta_2)}$. The fixed point of this map is then the unique solution of the RDE \eqref{eq:HMMRDE} with initial condition $X_0 = x$ over the time interval $[0,t_2]$ satisfying $X' = \phi(X,\gamma)$.

\textbf{Step 3.}
Let $\tbY = (\tY,\teY) \in \rp$ be another rough path with $\ver{\tbY}_p \leq L$, let $\tg \in \Cptvar$ and $\tilde{x} \in \R^m$. Let $(X,X') = (X,\phi(X,\gamma)) \in \crp$ (resp.~$(\tX,\tX') = (\tX,\phi(\tX,\tg)) \in \crpt$) be the unique solution of the RDE \eqref{eq:HMMRDE} driven by $\bY$ (resp.~$\tbY$) with parameter $\gamma$ (resp.~$\tg$) and initial value $x$ (resp.~$\tilde{x}$), which exists over the time interval $[0,t_2]$ (resp.~$[0,\tilde{t}_2]$) by the above. In the following we denote by $\lesssim$ inequality up to a multiplicative constant which may depend on $b, \phi, p$ and $L$.

From the equality $X_{s,t} - \tX_{s,t} = X'_sY_{s,t} - \tX'_s\tY_{s,t} + R^X_{s,t} - R^{\tX}_{s,t}$, we deduce
\begin{align}
\|X - \tX\|_p &\leq \|X' - \tX'\|_\infty\|Y\|_p + \|\phi(\tX,\tg)\|_\infty\|Y - \tY\|_p + \|R^X - R^{\tX}\|_{\frac{p}{2}}\nonumber\\
&\lesssim |X_0 - \tX_0| + |\gamma_0 - \tg_0| + \|X' - \tX'\|_p + \|Y - \tY\|_p + \|R^X - R^{\tX}\|_{\frac{p}{2}}.\label{eq:XtXpest}
\end{align}
By Lemma~\ref{lemmacontractionHMM}, for any (new) $\delta \geq 1$, we have
\begin{align*}
&\|X' - \tX'\|_p + \delta\|R^X - R^{\tX}\|_{\frac{p}{2}}\\
&\leq \big\|\phi(X,\gamma) - \phi(\tX,\tg)\big\|_p + \delta\big\|R^{\int_0^\cdot\phi(X_r,\gamma_r)\,\rd\bY_r} - R^{\int_0^\cdot\phi(\tX_r,\tg_r)\,\rd\tbY_r}\big\|_{\frac{p}{2}}\\
&\quad + \delta\bigg\|\int_0^\cdot b(X_r,\gamma_r)\big)\,\rd r - \int_0^\cdot b(\tX_r,\tg_r)\big)\,\rd r\bigg\|_{\frac{p}{2}}\\
&\lesssim |X_0 - \tX_0| + \|R^X - R^{\tX}\|_{\frac{p}{2}} + |\gamma_0 - \tg_0| + \|\gamma - \tg\|_{\frac{p}{2}}\\
&\quad + \delta\Big(\Big(|X_0 - \tX_0| + \|X' - \tX'\|_p + \|R^X - R^{\tX}\|_{\frac{p}{2}} + |\gamma_0 - \tg_0| + \|\gamma - \tg\|_{\frac{p}{2}}\Big)\ver{\bY}_p\\
&\hspace{40pt} + \|\bY;\tbY\|_p + \big(\|X - \tX\|_\infty + \|\gamma - \tg\|_\infty\big)t\Big).
\end{align*}
Writing $\|X - \tX\|_\infty \leq |X_0 - \tX_0| + \|X - \tX\|_p$ and using \eqref{eq:XtXpest}, we then have
\begin{align*}
\|&X' - \tX'\|_p + \delta\|R^X - R^{\tX}\|_{\frac{p}{2}}\\
&\leq C_3\Big(|X_0 - \tX_0| + \|R^X - R^{\tX}\|_{\frac{p}{2}} + |\gamma_0 - \tg_0| + \|\gamma - \tg\|_{\frac{p}{2}} + \delta\|\bY;\tbY\|_p\\
&\hspace{37pt} + \delta\Big(|X_0 - \tX_0| + \|X' - \tX'\|_p + \|R^X - R^{\tX}\|_{\frac{p}{2}}\\
&\hspace{65pt} + \|Y - \tY\|_p + |\gamma_0 - \tg_0| + \|\gamma - \tg\|_{\frac{p}{2}}\Big)\big(\ver{\bY}_{\pvarzt} + t\big)\Big)
\end{align*}
for some constant $C_3 > \frac{1}{2}$ depending only on $b, \phi, p$ and $L$. Let $\delta = \delta_3 := 2C_3 > 1$. Taking $t = t_3 \leq t_2 \wedge \tilde{t}_2$ sufficiently small such that
\begin{equation}\label{eq:smalltimet3}
\ver{\bY}_{p,[0,t_3]} + t_3 \leq \frac{1}{4C_3^2}
\end{equation}
and rearranging, we obtain
\begin{align}
\|&X' - \tX'\|_{p,[0,t_3]} + (\delta_3 - 1)\|R^X - R^{\tX}\|_{\frac{p}{2},[0,t_3]}\nonumber\\
&\leq (\delta_3 + 1)\Big(|X_0 - \tX_0| + |\gamma_0 - \tg_0| + \|\gamma - \tg\|_{\frac{p}{2},[0,t_3]}\Big) + (\delta_3^2 + 1)\|\bY;\tbY\|_{p,[0,t_3]}.\label{eq:RDEcontXdRX}
\end{align}
Combining this with \eqref{eq:XtXpest}, we obtain
\begin{equation}\label{eq:RDEcontX}
\|X - \tX\|_{p,[0,t_3]} \lesssim |X_0 - \tX_0| + \|\bY;\tbY\|_{p,[0,t_3]} + |\gamma_0 - \tg_0| + \|\gamma - \tg\|_{\frac{p}{2},[0,t_3]}.
\end{equation}

\textbf{Step 4.}
From now on we allow the multiplicative constant indicated by $\lesssim$ to also depend on $\psi$. We have
\begin{align}
&\bigg\|\int_0^\cdot\psi(X_r,\gamma_r)\,\rd\bY_r - \int_0^\cdot\psi(\tX_r,\tg_r)\,\rd\tbY_r\bigg\|_{p,[0,t_3]}\nonumber\\
&\lesssim \big(\|X - \tX\|_\infty + \|\gamma - \tg\|_\infty\big)\|Y\|_{p,[0,t_3]} + \|Y - \tY\|_{p,[0,t_3]}\nonumber\\
&\qquad + \big\|R^{\int_0^\cdot\psi(X_r)\,\rd\bY_r} - R^{\int_0^\cdot\psi(\tX_r)\,\rd\tbY_r}\big\|_{\frac{p}{2},[0,t_3]}\nonumber\\
&\lesssim |X_0 - \tX_0| + \|X - \tX\|_{p,[0,t_3]} + \|X' - \tX'\|_{p,[0,t_3]} + \|R^X - R^{\tX}\|_{\frac{p}{2},[0,t_3]}\nonumber\\
&\qquad + \|\bY;\tbY\|_{p,[0,t_3]} + |\gamma_0 - \tg_0| + \|\gamma - \tg\|_{\frac{p}{2},[0,t_3]}\nonumber\\
&\lesssim |X_0 - \tX_0| + \|\bY;\tbY\|_{p,[0,t_3]} + |\gamma_0 - \tg_0| + \|\gamma - \tg\|_{\frac{p}{2},[0,t_3]},\label{eq:RDEcontintpsi}
\end{align}
where we have applied Lemma~\ref{lemmacontractionHMM}, and then used \eqref{eq:RDEcontXdRX} and \eqref{eq:RDEcontX} to obtain the last line.

\textbf{Step 5.}
Recalling \eqref{eq:smalltimet1} and \eqref{eq:smalltimet2}, we have shown that on any time interval $[s,t]$ sufficiently small that
\begin{equation}\label{eq:smalltimeh}
\ver{\bY}_{p,[s,t]} + |t - s| + \|\gamma\|_{\frac{p}{2},[s,t]} \leq h := \min\bigg\{\frac{1}{4C_1^2},\frac{1}{4C_2^2}\bigg\},
\end{equation}
given an initial value $X_s$, there exists a unique solution $X$ to the RDE \eqref{eq:HMMRDE} over the interval $[s,t]$ corresponding to the driving rough path $\bY$ and the parameter $\gamma$. Moreover, if \eqref{eq:smalltimet1}, \eqref{eq:smalltimet2} and additionally \eqref{eq:smalltimet3} hold for both ($\bY, \gamma$) and ($\tbY, \tg$), then the estimates \eqref{eq:RDEcontX} and \eqref{eq:RDEcontintpsi} also hold over the interval $[s,t]$.

We can divide the (arbitrary) time interval $[0,T]$ into a partition $0 = s_0 < s_1 < \ldots < s_N = T$ such that \eqref{eq:smalltimeh} holds on each subinterval $[s_{i-1},s_i]$, $i = 1,\ldots,N$. That this partition is finite follows from the compactness of the interval $[0,T]$.

Note that, crucially, all of our estimates hold independently of the initial values $X_0 = x$ and $\gamma_0$. Thus, by pasting solutions on each of these subintervals together, we obtain a unique global solution $X$, which holds over the entire interval $[0,T]$.

By possibly increasing the length of each subinterval, we may assume without loss of generality that the inequality in \eqref{eq:smalltimeh} is actually an equality on every subinterval $[s_{i-1},s_i]$ in our partition (except possibly on one such subinterval). It then follows that, on each subinterval $[s_{i-1},s_i]$, at least one of $\|Y\|_{p,[s_{i-1},s_i]} \geq h/4$ or $\|\Y\|_{\frac{p}{2},[s_{i-1},s_i]} \geq h/4$ or $|s_i - s_{i-1}| \geq h/4$ or $\|\gamma\|_{\frac{p}{2},[s_{i-1},s_i]} \geq h/4$ holds. Since
\begin{gather*}
\sum_{i=1}^n \|Y\|_{p,[s_{i-1},s_i]}^p \leq \|Y\|_{p,[0,T]}^p, \qquad \quad \sum_{i=1}^n \|\Y\|_{\frac{p}{2},[s_{i-1},s_i]}^{\frac{p}{2}} \leq \|\Y\|_{\frac{p}{2},[0,T]}^{\frac{p}{2}},\\
\sum_{i=1}^n |s_i - s_{i-1}| \leq T, \qquad \text{and} \qquad \sum_{i=1}^n \|\gamma\|_{\frac{p}{2},[s_{i-1},s_i]}^{\frac{p}{2}} \leq \|\gamma\|_{\frac{p}{2},[0,T]}^{\frac{p}{2}}
\end{gather*}
for every $n \geq 1$, we infer an upper bound on the total number of subintervals:
\begin{equation}\label{eq:boundonnosubintsN}
N \lesssim \|Y\|_{p,[0,T]}^p + \|\Y\|_{\frac{p}{2},[0,T]}^{\frac{p}{2}} + T + \|\gamma\|_{\frac{p}{2},[0,T]}^{\frac{p}{2}} \lesssim 1 + T + \|\gamma\|_{\frac{p}{2},[0,T]}^{\frac{p}{2}}.
\end{equation}

It follows from Lemma~\ref{lemmainvarianceHMM} that
$$\bigg\|\int_0^\cdot\psi(X_r,\gamma_r)\,\rd\bY_r\bigg\|_{p,[s_{i-1},s_i]} \lesssim \ver{\bY}_{p,[s_{i-1},s_i]}.$$
Hence, by Lemma~\ref{lemmapvarpartitionbound},
\begin{align*}
\bigg\|\int_0^\cdot\psi(X_r,\gamma_r)\,\rd\bY_r\bigg\|_{p,[0,T]} &\leq N^{\frac{p - 1}{p}} \bigg(\sum_{i=1}^N \bigg\|\int_0^\cdot\psi(X_r,\gamma_r)\,\rd\bY_r\bigg\|_{p,[s_{i-1},s_i]}^p\bigg)^{\hspace{-2.5pt}\frac{1}{p}}\\
&\lesssim N^{\frac{p - 1}{p}} \ver{\bY}_{p,[0,T]}
\end{align*}
and, combining this with \eqref{eq:boundonnosubintsN}, we obtain \eqref{eq:HMMroughintpsibound}.

We now choose the partition $0 = s_0 < s_1 < \ldots < s_N = T$ so that \eqref{eq:smalltimet1}, \eqref{eq:smalltimet2} and \eqref{eq:smalltimet3} all hold on each subinterval $[s_{i-1},s_i]$ for both ($\bY, \gamma$) and ($\tbY, \tg$). Since
$$\|X - \tX\|_{p,[0,T]} \leq N^{\frac{p - 1}{p}} \bigg(\sum_{i=1}^N \|X - \tX\|_{p,[s_{i-1},s_i]}^p\bigg)^{\hspace{-2pt}\frac{1}{p}}$$
(again by Lemma~\ref{lemmapvarpartitionbound}), we can paste the local estimates \eqref{eq:RDEcontX}--\eqref{eq:RDEcontintpsi} over each subinterval to obtain the global estimates \eqref{eq:RDEcontXglobal}--\eqref{eq:RDEcontintpsiglobal}.

The statements on consistency of rough and stochastic integrals and differential equations are standard; see for example Corollary~5.2 and Theorem~9.1 in \cite{FrizHairer2014}.
\end{proof}

\subsection{Proofs of supplementary results}

\begin{proof}[Proof of Corollary~\ref{corollary reverse time}]
Setting $\hat{Y}_t = Y_{T-t}$ and $\hat{\Y}_{s,t} = -\Y_{T-t,T-s} + Y_{T-t,T-s} \otimes Y_{T-t,T-s}$, we obtain a rough path $\hat{\bY} = (\hat{Y},\hat{\Y}) \in \rp$ which is in fact the time reversal of the original path $\bY$. Letting $\hat{\gamma}_t = \gamma_{T-t}$, the solution $\hat{X}$ of the RDE \eqref{eq:HMMRDE} (with $\rd t$ replaced by $-\rd t$) driven by $\hat{\bY}$ with parameter $\hat{\gamma}$ and initial condition $\hat{X}_0 = X_T$, is then simply the time reversal of the solution $X$ of the original RDE. By applying part (iii) of Theorem~\ref{theoremHMMRDE} to the corresponding time reversals of the solutions $X$, $\tX$, we deduce the desired estimates.
\end{proof}

\begin{proof}[Proof of Lemma~\ref{lemma sharp rough bound}]
Fix $p \in [2,3)$ and $q < \frac{p - 1}{2}$. Let $\vp \colon \R \to \R$ be the $4$-periodic function satisfying
\begin{equation*}
\vp(s) = \left\{\begin{array}{cc}
s &\quad \text{for}\quad\ 0 \leq s \leq 1,\\
1 &\quad \text{for}\quad\ 1 \leq s \leq 2,\\
3 - s &\quad \text{for}\quad\ 2 \leq s \leq 3,\\
0 &\quad \text{for}\quad\ 3 \leq s \leq 4.
\end{array}\right.
\end{equation*}
Let $T = 4$, and $\epsilon \in (0,\frac{2}{p})$. For integer $n \geq 1$, let
$$Y^n_t = (2n)^{-\frac{1}{p}}\vp(nt) \quad \text{and} \quad \gamma^n_t = 2^{-\frac{2}{p}}n^{-\epsilon}\vp(nt + 1) \quad \text{for} \quad t \in [0,T],$$
and $\bY^n = (Y^n,\Y^n)$, where $\Y^n$ is the canonical enhancement of $Y^n$, so that
$$\Y^n_{s,t} = \int_s^t Y^n_{s,r}\,\rd Y^n_r = \frac{1}{2}(Y^n_{s,t})^2, \qquad \text{for} \quad (s,t) \in \simplex.$$
Let $\psi \colon \R \to \R$ be a $C^3_b$ function which is equal to the identity on the interval $[0,1]$. We then have that $\|Y^n\|_{p,[0,T]} = 1$, $\|\Y^n\|_{\frac{p}{2},[0,T]} = \frac{1}{2}$, $\|\gamma^n\|_{\frac{p}{2},[0,T]} = n^{\frac{2}{p} - \epsilon}$, and
$$\int_0^T \psi(\gamma^n_r)\,\rd Y^n_r = \int_0^T \gamma^n_r\,\rd Y^n_r = 2^{-\frac{3}{p}}n^{1 - \frac{1}{p} - \epsilon}.$$

Since $q < \frac{p - 1}{2}$, we may take $\epsilon \in (0,\frac{2}{p})$ sufficiently small such that
$$\frac{p - 1 - \epsilon p}{2 - \epsilon p} > q,$$
and rearranging then gives
$$1 - \frac{1}{p} - \epsilon > q\bigg(\frac{2}{p} - \epsilon\bigg).$$
It follows that there does not exist a constant $C$ such that
$$\int_0^T \psi(\gamma^n_r)\,\rd Y^n_r = 2^{-\frac{3}{p}}n^{1 - \frac{1}{p} - \epsilon} \leq C\big(1 + n^{q(\frac{2}{p} - \epsilon)}\big)\frac{3}{2} = C\big(1 + \|\gamma^n\|_{\frac{p}{2},[0,T]}^{q}\big)\ver{\bY^n}_{p,[0,T]}$$
holds for every $n \geq 1$.
\end{proof}

\bibliographystyle{abbrv}
\bibliography{References_Andy}

\end{document}